\newif\ifPAPER
\newif\ifTR 
\let\cl@chapter\relax \makeatother			
\def\BState{\State\hskip-\ALG@thistlm}
\colorlet{darkred}{red!80!black}
\colorlet{darkgreen}{green!60!black}
\colorlet{darkblue}{blue!80!black}
\colorlet{darkorange}{orange!70!black}
\definecolor{Green}{cmyk}{1, 0.2, 0.4, 0.1}
\definecolor{Orange}{cmyk}{0, 0.61, 0.87, 0}
\definecolor{Purple}{rgb}{0.75, 0.0, 1.0}
\definecolor{purple}{rgb}{0.5,0,0.5}
\definecolor{dgreen}{rgb}{0.1,0.9,0.5}
\definecolor{gabysgreen}{cmyk}{0.80, 0.1, 0.90, 0}
\newcommand{\irg}[2]{[#1\!:\!#2]}
\newcommand{\R}{\mathbb{R}}						
\newcommand{\N}{\mathbb{N}}						
\newcommand{\x}{\vc{x}}							
\renewcommand{\u}{\vc{u}}						
\newcommand{\lrbr}[1]{\left\lbrace #1 \right\rbrace} 
\newcommand\ga{\alpha}
\newcommand\gga{\bm{\alpha}}
\newcommand\ggb{\bm{\beta}}
\newcommand\gd{\delta}
\newcommand\ggd{\bm{\delta}}
\newcommand\gc{\gamma}
\newcommand\ggc{\bm{\gamma}}
\newcommand\gl{\lambda}
\newcommand\ggl{\bm{\lambda}}
\newcommand\mmu{\bm{\mu}}
\def\x{\mathbf x}
\def\aa{\mathbf a}
\def\y{\mathbf y}
\def\z{\mathbf z}
\def\s{\mathbf s}
\def\t{\mathbf t}
\def\v{\mathbf v}
\def\w{\mathbf w}
\def\f{\mathbf f}
\def\oo{\mathbf o}
\def\e{\mathbf e}
\def\u{\mathbf u}
\def\v{\mathbf v}
\def\b{\mathbf b}
\def\cc{\mathbf c}
\def\d{\mathbf d}
\def\g{\mathbf g}
\def\q{{\mathbf q}}
\def\r{{\mathbf r}}
\def\AA{{\mathcal A}}
\newcommand\II{{\mathcal I}}
\newcommand\JJ{{\mathcal J}}
\newcommand\KK{{\mathcal K}}
\newcommand\MM{{\mathcal M}}
\newcommand\RR{{\mathcal R}}
\newcommand\UU{{\mathcal U}}
\newcommand\VV{{\mathcal V}}
\newcommand\XX{{\mathcal X}}
\newcommand\Ab{{\mathsf{A}}}
\newcommand\Bb{{\mathsf B}}
\newcommand\Cb{{\mathsf C}}
\newcommand\Eb{{\mathsf E}}
\newcommand\Fb{{\mathsf F}}
\newcommand\Gb{{\mathsf G}}
\newcommand\Hb{{\mathsf H}}
\newcommand\Ib{{\mathsf I}}
\newcommand\Ob{{\mathsf O}}
\newcommand\Xb{{\mathsf X}}
\newcommand{\T}{^\mathsf{T}}	
\def\beq#1{$$}														
\def\ol#1{{\overline #1}}
\def\eps{\varepsilon}
\def\ol#1{{\overline #1}}
\def\bea#1{\begin{array}{#1}}
	\def\ea{\end{array}}
\def\ignore#1{}
\theoremstyle{plain}
\newtheorem{thm}{Theorem} 
\newtheorem{exmp}{Example} 
\newtheorem{asm}{Assumption}
\newtheorem{lem}[thm]{Lemma}
\newtheorem{cor}{Corollary} 
\newtheorem{rem}{Remark}
\theoremstyle{definition}
\theoremstyle{remark}
\titlespacing\section{0pt}{4pt plus 2pt minus 2pt}{0pt plus 1pt minus 3pt}
\titlespacing\subsection{0pt}{3pt plus 2pt minus 2pt}{0pt plus 1pt minus 3pt}
\titlespacing\subsubsection{0pt}{2pt plus 2pt minus 2pt}{0pt plus 1pt minus 3pt}
\begin{document}
		\ifPAPER
	\title{Quadratic Optimization Techniques in Robust Optimization\thanks{Supported by
			the Vienna Science and Technology Fund (WWTF) through
			project ICT15-014, and the Austrian Science Fund (FWF) through project P26755-N26.}
	}

	\author{Immanuel M. Bomze   \and
		Markus Gabl
	}
	
	\titlerunning{Quadratic Optimization Techniques in (adjustable ) Robust Optimization and back}        
	\authorrunning{Bomze and Gabl} 
	
	\institute{	
		Markus Gabl \at ISOR/VCOR, University of Vienna, Austria.
		\\[1em]	\email{markus.gabl@univie.ac.at} \\          
		Tel.: +43-1-4277-38665 
	}
	
	\date{Received: date / Accepted: date}
	
	\maketitle
	\fi	
	
	\ifTR
	\title{Uncertainty Preferences in Robust Mixed-Integer \\ Linear Optimization with Endogenous Uncertainty \thanks{Supported by the
			Austrian Science Fund (project DK W 1260-N35).}}
	
	\author{Immanuel Bomze, Markus Gabl} 
	\affil{ISOR/VCOR/VGSCO and ds:univie, University of Vienna, Austria. \texttt{[immanuel.bomze|markus.gabl]@univie.ac.at}\\
		\texttt{+43-1-4277-[38652|838672]}
	}
	\renewcommand\Authands{ and }
	\maketitle
	\else
	\fi	
	\begin{abstract}
		In robust optimization one seeks to make a decision under uncertainty, where the goal is to find the solution with the best worst-case performance. The set of possible realizations of the uncertain data is described by a so-called uncertainty set. In many scenarios, a decision maker may influence the uncertainty regime they are facing, for example, by investing in market research, or in machines which work with higher precision. Recently, this situation was addressed in the literature by introducing decision dependent uncertainty sets (endogenous uncertainty), i.e., uncertainty sets whose structure depends on (typically discrete) decision variables. In this way, one can model the trade-off between reducing the cost of robustness versus the cost of the investment necessary for influencing the uncertainty. However, there is another trade-off to be made here. With different uncertainty regimes, not only do the worst-case optimal solutions vary, but also other aspects of those solutions such as max-regret, best-case performance or predictability of the performance. A decision maker may still be interested in having a performance guarantee, but at the same time be willing to forgo superior worst-case performance if those other aspects can be enhanced by switching to a suitable uncertainty regime.  We introduce the notion of \textit{uncertainty preference} in order to capture such stances. We introduce a multi-objective optimization based and a bilevel optimization based model that integrate these preferences in a meaningful way. Further, we present three ways to formalize uncertainty preferences and study  the resulting mathematical models. The goal is to have reformulations/approximations of these models which can be solved with standard methods. The workhorse is mixed-integer linear and conic optimization. We apply our framework to the uncertain shortest path problem and conduct numerical experiments for the resulting models. We can demonstrate that our models can be handled very well by standard mixed-integer linear solvers.
		\ifPAPER
		\keywords{Robust optimization \and Quadratic optimization \and Conic optimization \and Graph clustering}
		\subclass{90C20, 90C30, 90C35, 90C47}
		\fi
		\ifTR
		\\
		\\
		\textbf{Keywords:} Robust optimization $\cdot$ Discrete Optimization $\cdot$ Endogenous Uncertainty $\cdot$ Risk Aversion
		\fi
	\end{abstract}	
	\pagebreak	
	\section{Introduction}
	\subsection{Background and basic ideas}	
	The presence of uncertainty in decision problems has been a driving factor of research in optimization. Sources for uncertainty are numerous: relevant data may not be available at the time of decision making and have to be estimated. Even if available, relevant data may be riddled with measurement errors, and decisions made may not be implementable exactly. Poor understanding, or ignorance, of uncertainty in decision making may lead to bad performance of a decision.  The resulting challenges for optimization have been met under two major paradigms, namely robust optimization and stochastic optimization. The latter is concerned with optimizing moments (mostly expected values) of functions that depend on random variables under a known probability law  (see e.g. \cite{kall1994stochastic,shapiro2014lectures}). This approach is appealing as it uses the maximum information about an uncertain process, i.e. its probability distribution. Unfortunately, this comes at the cost of having to deal with mathematical optimization models that more often than not are intractable, so that their solutions have to be approximated with computationally expensive methods. Also, the probability distribution in question may be unknown so that a new source for uncertainty arises, which itself poses a major challenge.
	
	The alternative paradigm of dealing with uncertainty is that of robust optimization. Here, uncertain data is assumed to reside within an appropriately structured uncertainty set and one seeks to find the solution with the best worst-case performance among those solutions that are feasible for all possible realizations of the uncertain data (see e.g. \cite{ben2009robust,gorissen2015practical}). This approach requires more modest assumptions, as no probability law has to be assumed. In addition, the robust counterpart of an uncertain problem can often be reformulated as a convex optimization problem that can be solved in polynomial time. The major drawback of this approach is, that the solutions obtained are often too conservative, courtesy of the pessimistic perspective inherent in robust optimization. Many attempts have been made to alleviate this shortcoming which lead to the development of concepts such as adjustable robustness (see \cite{ben2004adjustable}), max-regret robustness (see \cite{kouvelis1997robust}), among others.
	
	A recent line of literature introduced the notion of endogenous uncertainty where the decision maker has control over an additional set of (often binary) variables which govern the structure of the uncertainty. This approach allows the modelling of situations where the uncertainty can be influenced by the decision maker by investing, for instance, in market research or more precise machines.
	The concept has been known in the stochastic optimization community for some time, see  \cite{jonsbraaten1998class} for an early example. Other examples can be found in \cite{goel2006class}, who applied a stochastic optimization framework with decision dependent uncertainty to capacity expansion of process networks and the sizes problem in production, and in \cite{green2013nursevendor}, who considered personnel staffing in the presence of endogenous absenteeism. More recently, the concept of endogenous uncertainty was adopted for distributionally robust optimization, see e.g.  \cite{luo2020distributionally,noyan2018distributionally,ryu2019nurse}. However, in this text we will focus entirely on the robust optimization point of view.

	From a robust optimization perspective, an investment  in  a modified uncertainty set is only acceptable if the expenses do not exceed the gains in worst-case pay-off under the new uncertainty set. The arising decision problem was recently addressed in \cite{lappas2018robust,nohadani2018optimization}, where the authors studied reformulations  amenable to techniques for solving a mixed-integer linear optimization problem (MILP) or more generally, a conic mixed-integer linear optimization problem (C-MILP). In \cite{lappas2018robust} it is also proved that the class of robust optimization problems under endogenous uncertainty is NP-hard. We will expand on the idea of endogenous uncertainty by introducing the notion of uncertainty preference, aiming at tractable formulations.

	\subsection{Uncertainty preferences and main motivation of this study}\label{sec:Uncertainty preferences and main motivation of this study}
	The framework of robust optimization with endogenous uncertainty allows us to model the trade-off between the cost of uncertainty reduction and worst-case performance.
	However, there is another trade-off to be made in such a scenario. Since we influence the structure of the uncertainty by our decision we do not only change the robustly optimal solution, we also affect other qualities such a solution exhibits, and a decision maker may have a certain preference with respect to that qualities. We chose the name \textit{uncertainty preferences} as an overarching title for such preferences, regarding the structure of the uncertainty set and the implied quality of robustly optimal solutions. Let us illustrate the core idea of our paper in  a simple  yet illustrative example.
	\begin{exmp}\label{exmp:ShortestPath}
		Consider the graph depicted in \cref{fig:exmp1}. The travel costs along the edges are uncertain, and only known to lie in a given interval. We can choose to influence uncertainty either on the edges that lead to $T_1$ or on edges that lead to $T_2$. We refer to the uncertainty regime where no influence on the uncertainty is taken as $\UU_0$, the one where uncertainty is affected on paths towards $T_i$ is referred to as $\UU_i$, where $i=1,2$.  Our aim is to choose a path from the source-vertex $S$ to one of the sink-vertices $T_1$ or $T_2$ such that the worst-case traveling time is minimized. The following table gives the travel costs on the edges under different uncertainty regimes.		
		\begin{figure}[ht]
			\begin{subfigure}{0.5\textwidth}
				\hspace{2cm}
				\begin{tikzpicture}
					[
					vertex/.style = {draw, inner sep = 2pt, circle},
					arc/.style = {very thick, ->},
					label/.style = {sloped, above, font=\small}
					]	 			
					\node (1) [vertex] at (0, 0) {$S$};
					\node (2) [vertex] at (0, 2) {$T_1$};
					\node (3) [vertex] at (2.5 , 0) {$T_2$};	 			
					\draw (1) edge [arc, bend right, right] node  {$E_2$} (2);
					\draw (1) edge [arc, bend left, left] node  {$E_1$} (2);
					\draw (1) edge [arc] node [label] {$E_3$} (3);
				\end{tikzpicture}
			\end{subfigure}
			\begin{subfigure}{0.5\textwidth}
				\textnormal{
					\begin{tabular}{c|c c c}
						Edge&$\UU_0$&$\UU_1$&$\UU_2$  \\
						\hline
						$E_1$&[4,5] &[3,4] &[2,5]\\
						\hline
						$E_2$&[4,5] &[2,4] &[2,5]\\
						\hline
						$E_3$&[4,5] &[3,5] &[1,4]\\
				\end{tabular}}
			\end{subfigure}
			\caption{}\label{fig:exmp1}	
			\label{fig:image2}
		\end{figure}	 	
			Unless we impose a cost on reducing uncertainty, $\UU_0$ is the inferior choice to make. Thus, one would either choose $\UU_1$ or $\UU_2$ as the uncertainty regime under which to operate and then choose only among those paths, that are optimal in the worst case which leaves us with only three choices described in below table.
			
			The core realization is that, depending on the uncertainty set, the robustly optimal solutions vary substantially with respect to characteristics other than worst-case performance. In fact, in this example, the three robustly optimal decisions do not vary at all with respect to worst-case performance. A decision maker who only seeks to optimize the worst case behavior of the solution would be indifferent between the three choices. However, each of these choices can be clearly ranked with respect to maximum-regret, best-case performance and predictability, i.e.\ the range of possible outcomes.
		
		\begin{figure}[h]
			\begin{tabular}{c||c| c c c}
				\textbf{Decision}& \textit{worst-case perf.}  & \textit{maximum-regret} & \textit{best-case perf.} & \textit{predictability} \\
				\hline
				traverse $E_1$ under $\UU_1$& 4 & 2 & 3 & \textbf{1}\\
				\hline
				traverse $E_2$ under $\UU_1$& 4 & \textbf{1} & 2 &  2\\
				\hline
				traverse $E_3$ under $\UU_2$& 4 & 3 & \textbf{1} &  3\\ 			
			\end{tabular}
		\end{figure} 		
			\end{exmp}	
	
	\cref{exmp:ShortestPath} shows that for different uncertainty regimes, robust solutions to the shortest path problem may exhibit additional features beyond robustness, which may be considered when evaluating such choices. Specifically, under different uncertainty regimes the respective robustly optimal solutions differ with respect to the \textit{predictability} of the outcome, the risk of experiencing \textit{regret} in hindsight, and the \textit{best-case performance}. We can also see that under some uncertainty regimes, a robust solution is not unique, and some of the robustly optimal choices may also exhibit said distinguishing features. We conceptualize the notion of uncertainty preference as the desire to operate under uncertainty regimes where there is a robustly optimal solution that performs well with respect to predictability, regret and/or best-case performance.  

	It is plausible that a decision maker faces this situation under various circumstances. Firstly, robustly optimal solutions may be similar across uncertainty regimes, yet still vary significantly with respect to other characteristics mentioned above. In this case a decision maker may wish to choose, among several nearly identical robustly optimal solutions, those that also incur additional advantages. Secondly, the decision on the uncertainty regime and on the solution to be implemented may be divided between two parties. This could take different forms: having a solution that is worst case optimal for the chosen uncertainty regime could be a requirement that is forced upon the decision maker, for example by company policy. In such a situation, a decision maker may still want to optimize other characteristics via their influence on the uncertainty regime and/or by picking a respective solution from the robustly optimal set. Further, in a setting similar to the classical leader-follower arrangement of bilevel optimization a decision maker who only has influence over the uncertainty regime may want to optimize their choice under the assumption that whoever is in control over the other decision variables will choose a robustly optimal solution.

		We will tackle this phenomenon under two major paradigms, namely that of \textit{multi-objective optimization} and that of \textit{bilevel optimization}. Again we want to use examples to illustrate our reasoning.
		\begin{exmp}\label{exmp:BilevelExample}
			A decision maker is tasked with an investment in public infrastructure such that in case of a natural disasters, important supply routes can be traversed confidently. The investment would reduce uncertainty in the travel time between two neuralgic points $A$ and $B$ that are connected through a network of roads, so that different paths exist between the two. There are not enough funds to fortify all roads. The decision maker knows that in case of an emergency the only acceptable travel route is the robustly optimal one. But at the same time they know that after the fact, their decision will be evaluated with the power of hindsight. In such a situation a decision maker may wish to take a decision on the investment in public infrastructure in such a way that the robustly optimal solution exhibits low risk of regret, such that it withstands the evaluation made in hindsight. Such a decision requires the anticipation of the robustly optimal solution present under the choice on the uncertainty regime that is to be made by the decision maker. Thus, we have a bilevel optimization problem.
		\end{exmp}
		\begin{exmp}\label{exmp:EpsilonConstraintExample}
			In the above example, even if the decision maker does not bear the risk of unfair evaluation, there may be several robustly optimal solutions under the robustly optimal uncertainty regime, and a decision maker may want to optimize a secondary characteristics over the optimal set of the primary decision problem, or an appropriately chosen set containing the latter. This is a classical problem in multi-objective optimization and can be met by employing so called $\eps$-constraints.
		\end{exmp}
	
	Following the ideas behind these examples, the goal of this paper is to provide models that capture uncertainty preference in a robust optimization problem, and to give examples where these models yield tractable optimization problems which can be handled with standard methods such as conic mixed-integer optimization.
	
	\subsection{Contribution and organization of this paper}
		
		We introduce the notion of uncertainty preference, which accounts for the fact that robustly optimal solutions obtained under different uncertainty regimes may have secondary, yet relevant characteristics (see Subsection~\ref{sec:Uncertainty preferences and main motivation of this study})  
		and formulate two mathematical models that capture the respective optimization problem: a multi-objective optimization model involving $\eps$-constraints and a bilevel optimization model (see Subsection~\ref{apx:EndogenousUncertainty}). 
		
		In case the nominal problem is a linear optimization problem, we characterize the set of robustly optimal solutions based on robust optimization duality theory. The resulting reformulation of the bilevel optimization problem is amenable to standard C-MILP solvers. In case the nominal problem is a MILP, we give a finite reformulation of the bilevel optimization problem with a potentially exponential number of constraints. These reformulations can be tackled using lazy constraints in conjunction with C-MILP solvers. These topics are discussed in Sections~\ref{sec:WorkingwithRxB} and \ref{sec:Finite reformulations for mixed-integer problems}.
		
		In Section~\ref{sec:ModellingUncertaintyPreferenceviaMM}, we describe the objective function terms that capture preferences for predictability, max-regret and best-case performance. Except for predictability, these formulations are in general not easy to handle. We highlight some special cases where we can achieve reformulations or approximations that can be solved with reasonable effort using C-MILP solvers. We apply our framework in Section~\ref{sec:RSPP} to the shortest path problem under endogenous uncertainty, which was considered before in \cite{lappas2018robust} and will serve as an extensive illustration of our framework. Numerical experiments are performed in Section~\ref{sec:NumericalExperiments} basically as a proof of concept. We demonstrate that the proposed reformulations are handled well by C-MILP solvers in interesting and relevant cases, that the results obtained are meaningful and hopefully can aid a decision process.

		We want to highlight that we decided, in agreement with the editor, to relegate some additional auxiliary material to an appendix, which can be found in the extended technical report on Arxiv.  A short discussion on the generalization of the theory discussed in Section~\ref{sec:RSPP} to other network flow problems is presented in Appendix A. Furthermore, to give an alternative account of the applicability of our framework, in Appendix B, where we address a classical example from \cite{bertsimas2004price}, the knapsack problem under endogenous $\Gamma$-uncertainty. For readers who are not familiar with standard robust optimization techniques we put the otherwise trivial proof of \cref{thm:Predictability} in Appendix C. Some open questions are also briefly discussed in Appendix D.

	\subsection{Notation and preliminaries}
	In the sequel, matrices are denoted by sans-serif capital letters (e.g.\ the $n\times n$-identity matrix will be denoted by $\Ib_n$ and $\Ob$ will be the zero matrix), vectors by boldface lower case letters (for example \ $\e_i$ will denote the $i$-th column of $\Ib_n$, $\oo$ will denote the zero vector, and $\e$ the vector of all ones) and scalars (real numbers) by lower case letters. The $i$-th entry of a vector $\x$ will be denoted by $x_i$. Matrices and vectors, and matrices and numbers with double indices are treated analogously. That is, for a matrix $\Xb$, the $i$-th column or row vector (usage will be pointed out in context) is denoted by $\x_i$, and the $j$-th entry in the $i$-th row will by given $x_{ij}$. If the original vector already has an index, that index will be pushed into the exponent for the lower case symbol that denotes an entry, e.g. the $i$-th entry of $\ggd_j$ is denoted by $\gd^j_i$. Sets will be denoted using calligraphic letters, e.g., cones will often be denoted by $\KK$ and uncertainty sets by $\UU$. The only exception is the set $\irg{1}{k} \coloneqq \lrbr{1,\dots,k}$, and the function $\MM(.,.)$, which is explained later in the text and which we chose to highlight due to its significance.
	
	Throughout the text, we will make use of conic linear optimization tools. A conic linear optimization problem is given by
	\begin{align}\label{eqn:conicprimal}
		p^*=\inf_{\x}\left\lbrace \langle\cc,\x\rangle \colon \x\in \KK,\
		\langle\aa_i,\x\rangle = b_i\, ,\mbox{ all } i \in\irg{1}{m}\right\rbrace \, ,
		\tag{P}
	\end{align}
	which is just a linear optimization problem with an extra constraint that restricts the decision variable $\x$ to lie in a closed, convex cone $\KK$. The dual problem is given by
	\begin{align}\label{eqn:conicdual}
		d^*=\sup_{\ggl} \left\{ \sum_{i=1}^{m}\gl_i b_i \colon \ggl\in \R^m,\
		\cc - \sum_{i=1}^{m}\gl_i\aa_i \in \KK^* \right\}\, ,
		\tag{D}
	\end{align}
	where $\KK^*\coloneqq \lrbr{\x\in\R^n\colon \y\T\x\geq 0 \ \mbox{ all } \y\in\KK }$ is the dual cone of $\KK$.
	Slater's condition says that a relative interior feasible point of (P) guarantees $p^*=d^*$ and existence of an optimal solution to (D) if it is feasible, and likewise with (D) and (P) in reversed roles. The gap between the two also vanishes if $\KK$ is polyhedral and one of the problems is feasible.
	
	A central object this paper is concerned with is the \textit{endogenously uncertain (mixed-integer) linear optimization problem},
	\begin{align}\label{eqn:EndoUncer}
		\inf_{\x} \lrbr{ \cc(\u)\T\x \colon \ \x \in \XX_0, \ \aa_i(\u)\T\x \leq b_i(\u) \, ,\mbox{ all } i \in \irg{1}{m} }\  \mbox{ where } \u \in \UU^s,\, \mbox{and } s\in \irg{1}{S}\,  .			 \end{align}
	Here the problem data are affine functions of $\u$, i.e.
	$\cc(\u)\coloneqq\cc_0 + \Cb\u,\ \aa_i(\u) \coloneqq \aa^0_i + \Ab_i\u$ and $b_i(\u)\coloneqq b^0_i + \b_i\T\u\, ,\mbox{all }  i \in \irg{1}{m}$ and $\XX_0 \in \lrbr{\R^n_+,\ \R^{n_1}_+\times \lrbr{0,1}^{n_2}}$. Problem~(\ref{eqn:EndoUncer}) describes a family of instances parametrized by $\u$ and $s$. The former is called the \textit{uncertainty parameter}, which is not under the control of the decision maker who considers only those realizations of $\u$ which belong to  certain \textit{uncertainty sets} $\UU^s\subseteq \R^{q}$. Here we assume that these sets consist of all realizations of $\u$ the decision maker need to take into account, according to their responsibility.
	
	The uncertainty sets $\UU^s$ depend on the second parameter
	$s \in \irg{1}{S}$ which in fact is controlled by the decision maker, so that they can choose the \textit{uncertainty regime} under which they would like to operate. The uncertainty is thus endogenous. Throughout the paper we will model uncertainty sets as compact, convex conic intersections, i.e.  $\UU^s \coloneqq \lrbr{\u \colon (1,\u)\T \in \KK^s}$ where $\KK^s$ are appropriate closed, convex, pointed cones.
	Note that these assumptions, together with boundedness of $\UU^s$ entail the following implication:
	\begin{align}\label{eqn:0coneimplies0}
		(0,\x\T)\T \in \KK^s \implies \x = \oo \, .
	\end{align}
	\begin{rem}\label{rem:RemarkonModellingChoice}
Our modeling choice on  $\UU^s $, $\KK^s$ and $s\in\irg{1}{S}$ was made with the desire in mind to have a readable notation that can, with little effort, capture very general structure. We are aware that these are neither the most natural nor the most practical modeling choices for those objects, if we think of more specific instances of robust optimization problems under endogenous uncertainty. Consider for example uncertainty sets $\hat{\UU}(\hat{\r}) \coloneqq \lrbr{\u \in \R^q_+ \colon u_i \leq 1 + 0.5\hat{r}_i}$ with $\hat{\r}\in \hat{\XX}\coloneqq\lrbr{\hat{\r}\in \lrbr{0,1}^{\hat{S}}\colon \Ab\hat{\r}\leq \b}$ and assume the latter set is bounded. Then one can set $S = |\hat{\XX}|$, construct an isomorphism $f \colon \hat{\XX} \mapsto \irg{1}{S}$ and set $\KK^s\coloneqq \mathrm{cone}\left(\lrbr{1}\times \hat{\UU}\left(f^{-1}(s)\right)\right)$ in order to obtain $\UU^s = \hat{\UU}\left(f^{-1}(s)\right)$ so that we have a reformulation that complies to our assumption but is arguably less practical. Firstly, the number of variables has increased to a size perhaps exponential in $\hat{n}_B$ and the size of $\Ab$. Secondly the implicit constraint $\e\T\r = 1$ present in  our formulation can be much more challenging for MILP solvers than the inequalities in the description of $\hat{\XX}$. For practical purposes it is therefore advisable to operate in the original space of variables. We chose our general formulation merely to be able to speak of general structure in simple terms.		 
	\end{rem}	
	
	The construction of uncertainty sets is a well studied field and might be guided by different motivations. The most basic approach is to construct them in a way that guarantees that the robustified constraint is not violated with a certain probability (see \cite[Section 3]{gorissen2015practical}  and references therein). However, other authors have discussed construction schemes based on other principles such as reformulating optimization problems involving certain risk measure (see e.g.\ \cite{bertsimas2009constructing}). In the endogenously robust setting, \cite{lappas2018robust} give a detailed account on how uncertainty sets can be influenced by the decision maker and how to model these situations. We want to point out that it may not make sense to talk about predictability, max-regret, or best case performance over an uncertainty set that is constructed based on reformulating an optimization problem involving risk measures. These characteristics only make sense if the uncertainty set can be interpreted as a subset of the possible realizations of the uncertain process. We further comment on this issue in the final section of the appendix.

	We will employ several concepts associated with (\ref{eqn:EndoUncer}), namely: \textit{row/column wise uncertainty}, the \textit{nominal problem}, \textit{robust/optimistic/deterministic counterparts} as well as C-MILP reformulations of those. We refer to \cite{beck2009duality,ben2009robust,lappas2018robust,nohadani2018optimization,soyster2013unifying,soyster2016integration} for some background on these concepts.
	 	Frequently, for example in case of row- and column-wise uncertainty, the uncertainty set will be a Cartesian product of sets $\UU^s = \UU^s_1\times\dots\times\UU^s_k$. The factors of such products are likewise  modeled as conic intersections with cones $\KK_i^s,\ i \in \irg{1}{k}$.
	
	\begin{rem}\label{rem:RemarkofEntagledVariables}
		The choices  on $\x$ and $s$ a decision maker takes may be entangled in a way  that a decision on $\x$ limits the options on $s$  (see \cite{nohadani2018optimization} for examples). Such a situation can always be modeled via constraints of the form
	\begin{align*}
		\Hb\x+\Gb\y \leq \Fb\r_s+\f,
	\end{align*}
	where $\y$ is an auxiliary (real or binary) variable; $\Hb,\Gb,\Fb,\f$ are appropriate matrices/vectors and the entries of $\r_s$ are defined by the indicators (binary variables)
	\begin{align}\label{eqn:DefinitionOfr}
		r^s_i =
		\begin{cases}
			1 & \mbox{ if } s = i\, ,\\
			0 & \mbox{ else.}
		\end{cases}
	\end{align}
	However,  for the sake of notational simplicity, we will omit these constraints from the models discussed below.  Also, in some applications it will be practical to push the dependence on $s$ into the functions $\cc(\u),\ \aa_i(\u)$ and $b_i(\u)$ while keeping the uncertainty sets constant (see Section~\ref{sec:RSPP}).
	\end{rem}

	\section{The model}
		\subsection{Endogenous uncertainty}\label{apx:EndogenousUncertainty}
		
		In the case of \textit{endogenous uncertainty} a decision maker has some level of influence over the shape of the uncertainty set. So they can choose the \textit{uncertainty regime} under which they would like to operate.  For linear and mixed-integer linear optimization problems under endogenous uncertainty, the mathematical model is thus given by $\inf\limits_{s \in [1:S]} \left (\varphi_s+\gc_s\right )$, where
		\begin{align}\label{eqn:UAModelPrimalWorst}\tag{RC-s}
			\begin{split}
				\varphi_s \coloneqq \inf_{\x} \quad  \sup_{\u_0\in \UU^s_{0}} & \lrbr{ \cc(\u_0)\T\x} \\
				\mathrm{s.t.:}\ \aa_i(\u_i)\T\x &\leq b_i(\u_i) \,  \mbox{ for all } \u_i \in \UU^s_i \, ,\mbox{ all }  i \in \irg{1}{m}\\
				\quad \x &\in \XX_0.
			\end{split}		
		\end{align}
		Under uncertainty regime  $s$, the term $\sup_{\u_0\in \UU^s_{0}} \lrbr{\cc(\u_0)\T\x}$ captures the worst-case performance of a solution $\x\in\XX_0$, and $\varphi_s$ is the optimal value of the
		robust counterpart to~(\ref{eqn:EndoUncer}).  The cost of activating uncertainty regime $s$ is given by $\gc_s$, which is thus a function of $s$. C-MILP reformulations of $\inf\limits_{s \in [1:S]} \varphi_s+\gc_s $ have been studied in \cite{lappas2018robust,nohadani2018optimization}. We define the set of optimal solutions to this problem to be
		\begin{align*}
			\RR^s_{opt} \coloneqq \lrbr{\x \in \R^n_+ \colon \x \mbox{ is an optimal solution to (\ref{eqn:UAModelPrimalWorst})}}	
		\end{align*}
		We will discuss in Subsection~\ref{sec:WorkingwithRxB} how to work with the constraint $\x\in\RR^s_{opt}$. Following the ideas conveyed in \cref{exmp:BilevelExample} and \cref{exmp:EpsilonConstraintExample}, we will now introduce two models in the focus of this text.
		
		\subsubsection{A multi-objective optimization approach via $\eps$-constraints}
		One phenomenon arises quite frequently in the endogenously uncertain settin: the different choices on the uncertainty regime and, subsequently, on the decision variables may perform similar with respect the worst case criterion, but may vary greatly with respect to other characteristics. Multi-objective optimization is the standard paradigm under which we consider problems where more than one characteristics of a solution is to be optimized. In this setting, the objective function is a vector of the respective characteristics of a solution.
				Ordering cones are used to establish a type of dominance between different solutions.
		However, we wish to focus on the robust optimization perspective. Hence, we want to look for (nearly) robustly optimal solutions that optimize secondary characteristics. To this end we use the concept of $\eps$-constraints which is well known in the multi-objective optimization literature (see e.g. \cite{emmerich2018tutorial}) . The respective model is given by:
		\begin{align*}
			\begin{split}
				\inf_{\x , s } \quad \gc_s + \MM(\x,s)   \\
				\mathrm{s.t.:}\ \aa_i(\u_i)\T\x &\leq b_i(\u_i) \,  \mbox{ for all } \u_i \in \UU^s_i \, ,\mbox{ all }  i \in \irg{1}{m}\\
				\sup_{\u_0\in \UU^s_{0}} \lrbr{ \cc(\u_0)\T\x}& \leq \varphi^{*} + \eps, \
				\x \in \XX_0.
			\end{split}
		\end{align*}
		Here, $\varphi^*$ is the optimal value of the endogenously robust counterpart, gross of the activation cost of the robustly optimal uncertainty regime, and $\eps\geq 0$ is a degree two which we allow our solution to deviate from that optimal value. The function $\MM(\x,s)$ gives a measure of the performance of a solution $(\x,s)$ with respect to predictability, max-regret and best-case performance. This function influences the decision in two ways: firstly, which uncertainty regime is implemented and secondly, which robustly optimal solution is chosen in case the selected uncertainty regime allows for multiple robustly optimal solutions. We will explore different choices for $\MM(\x,s)$ and study their reformulations in Section~\ref{sec:ModellingUncertaintyPreferenceviaMM}. But first we will introduce an alternative model that is applicable to bilevel optimization setting.
		
		\subsubsection{A bilevel optimization approach}\label{sec:A bilevel optimization approach}
		Following the idea illustrated in \cref{exmp:BilevelExample}, we consider a leader-follower situation where the leader has control over the uncertainty regime and wants to optimize their preferred attribute under the assumption that the follower will implement the robustly optimal solution under the chosen uncertainty regime. Thus, in the leader's optimization problem, we need to guarantee that we only consider robustly optimal solution under the uncertainty regime of choice for the decision variable $\x$ controlled by the follower. Thus we want to solve
		\begin{align}
			\begin{split}\label{eqn:UPMModel}
				\inf_{\x,s} & \lrbr{\gc_s +  \ga\sup_{\u_0\in \UU^s_{0}}
				\cc(\u_0)\T\x
				+ \MM(\x,s) \colon \x \in \RR^s_{opt},\ s \in \irg{1}{S} } \, .		
			\end{split}\tag{UPP}
		\end{align}
		Note  that (\ref{eqn:UPMModel}) is feasible if and only if there is at least one $s\in\irg{1}{S}$
		such that~(\ref{eqn:UAModelPrimalWorst}) is feasible and attains its optimal value.	
		Here $\ga$ is a weight given to the robustified  objective value. We would like to point out that our model assumes a cooperative relationship between leader and  follower. In fact, for any fixed uncertainty regime the set of robustly optimal solutions may not be a singleton, and our model will select among these solutions the one optimizing the objective. However, an adversarial follower might not choose this solution once the leader implemented their preferred uncertainty regime. \cref{exmp:BilevelExample} depicts a situation where the assumption of a cooperative follower is justified. Modelling an adversarial follower is a more challenging task, which we will comment on in in the appendix.
		
	\subsection{The robustly optimal set $\RR^s_{opt}$}\label{sec:WorkingwithRxB}
	The feasible set of the decision vector  $\x$ is  given by $\RR^s_{opt}$, i.e. the set of solutions that are robustly optimal under the uncertainty regime indexed by  $s$. Hence, we have to solve a bilevel optimization problem. This class of problems is known to be challenging, in fact it is strongly NP-hard, as shown in \cite{hansen1992new}. Even if the optimal set of a lower level problem can be characterized, its parametrization via the top-level decision may lead to problem formulations that are not easily handled due to arising bilinear terms.
	
	Luckily, the top-level decision on $s$ is discrete, so that these bilinearities can be  exactly linearized by using McCormick envelopes~\cite{McCormick1976}. Thus, the characterization of the robustly optimal set for a fixed choice of $s$ is the main issue, which we will address in the sequel here. For cases where $\XX_0 = \R^n_+$ we will use robust duality theory in order to characterize $\RR^s_{opt}$. In case of $\XX_0 = \R^{n_1}_+\times\lrbr{0,1}^{n_2}$, this theory does unfortunately not apply, so we provide alternative ways for these cases.
	
 	Let us first deal with the case when $\XX_0 = \R^n_+$. In recent literature many aspects of duality in robust optimization have been studied (see \cite{beck2009duality,soyster2013unifying}). While the deterministic counterpart of a robust optimization problem, as a convex problem, enjoys rich duality  theory, a somewhat surprising observation first made in \cite{beck2009duality} is that, under mild assumptions, the dual of the robust counterpart of an uncertain linear optimization problem can be identified as the optimistic counterpart of the dual of that optimization problem (where the dualization is with respect to $\x$). Hence the phrase \textit{"primal worst equals dual best"} was coined in \cite{beck2009duality}. In case of (\ref{eqn:UAModelPrimalWorst}) the corresponding "dual best" problem is given by
	\begin{align}\label{eqn:DualBest}
		\begin{split}	
			\sup_{\y,\u_i} \lrbr{ \sum_{i=1}^{m}b_i(\u_i)y_i \colon
			\mathrm{s.t.:} \sum_{i=1}^{m}\aa_i(\u_i)y_i \leq \cc(\u_0)\, ,\
			\y\in \R^m_+,\  \u_i\in \UU^s_i \, ,\mbox{ all }  i \in \irg{0}{m}}\, .
		\end{split}	
	\end{align}
	We have the following result \cite[Theorem 3.1]{beck2009duality}:
	\begin{thm}\label{thm:PrimalworstEqualsDualbest}
		The problem~(\ref{eqn:DualBest}) and the Lagrangian dual of (\ref{eqn:UAModelPrimalWorst}) are equivalent.
	\end{thm}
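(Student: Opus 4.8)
The plan is to read the statement literally: since \eqref{eqn:DualBest} is asserted to \emph{be} the Lagrangian dual of \eqref{eqn:UAModelPrimalWorst}, I would write that Lagrangian dual out explicitly and then transform it, step by step, into the form of \eqref{eqn:DualBest}. First I would keep the worst-case objective $\sup_{\u_0\in\UU^s_0}\cc(\u_0)\T\x$ intact and dualize only the $m$ robust constraints. Each such constraint is a ``for all $\u_i$'' requirement and is therefore equivalent to the single scalar inequality $\sup_{\u_i\in\UU^s_i}[\aa_i(\u_i)\T\x-b_i(\u_i)]\le 0$, to which I attach one nonnegative multiplier $y_i$. Because $y_i\ge 0$, I may pull the multiplier inside the supremum, $y_i\sup_{\u_i}[\,\cdots]=\sup_{\u_i}y_i[\,\cdots]$, and since the $\u_i$ range over independent factors, the sum of these suprema collapses into a single joint supremum over $(\u_0,\dots,\u_m)$. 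This puts the dual function into the shape $g(\y)=\inf_{\x\in\R^n_+}\sup_{(\u_0,\dots,\u_m)}$ of one bracketed expression.

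The heart of the argument is to interchange this inner infimum and supremum. The bracketed expression is linear in $\x$ and, thanks to the affine data maps $\cc(\u_0),\aa_i(\u_i),b_i(\u_i)$ fixed in \eqref{eqn:EndoUncer}, affine (hence concave) in each $\u_i$; the uncertainty sets $\UU^s_i=\lrbr{\u\colon(1,\u)\T\in\KK^s_i}$ are convex and, by the standing compactness assumption (recession-cone triviality being recorded in \eqref{eqn:0coneimplies0}), compact, while $\R^n_+$ is convex. A minimax theorem of Sion type therefore permits swapping $\inf_{\x\in\R^n_+}$ with $\sup_{(\u_0,\dots,\u_m)}$. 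After the swap, the inner problem is a plain linear program over $\x\in\R^n_+$: minimize a linear functional whose coefficient vector is the combination of $\cc(\u_0)$ and the $y_i\aa_i(\u_i)$. Its optimal value is $0$ when that coefficient vector lies in $\R^n_+$ and $-\infty$ otherwise. This dichotomy is precisely dual feasibility and, after matching signs, becomes the constraint $\sum_i\aa_i(\u_i)y_i\le\cc(\u_0)$, while the surviving terms assemble into the objective $\sum_i b_i(\u_i)y_i$. Taking the outer supremum over $\y\in\R^m_+$ and over the $\u_i\in\UU^s_i$ then reproduces \eqref{eqn:DualBest} verbatim, so the two problems coincide as claimed in \cite{beck2009duality}.

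The step I expect to be the main obstacle is the minimax interchange. Sion's theorem requires compactness of one of the two domains, and here it is the maximization side (the product of the compact $\UU^s_i$) rather than $\R^n_+$, so I must apply the theorem in the orientation in which compactness is available and keep careful track of the value $-\infty$ that the inner infimum can legitimately attain, checking that this value is consistent on both sides of the swap so that no feasibility information is lost. A secondary, purely routine point is the sign bookkeeping: one must verify that the coefficient-nonnegativity condition produced by $\inf_{\x\in\R^n_+}$ matches the orientation of the inequality $\sum_i\aa_i(\u_i)y_i\le\cc(\u_0)$ in \eqref{eqn:DualBest}. I would emphasize that the compactness supplied by the assumptions on $\UU^s$ and $\KK^s$ is all that is needed for this \emph{equivalence} of the dual and the optimistic counterpart; no constraint qualification such as Slater's condition enters here, in contrast to the separate assertion that this common value actually equals $\varphi_s$.
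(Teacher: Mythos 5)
Your write-up should first be measured against what the paper actually does here: it offers \emph{no} proof of this theorem at all. The statement is imported from the literature --- the in-text justification is exactly the citation \cite[Theorem 3.1]{beck2009duality} --- so there is no in-paper argument to match. What you have produced is, in substance, a self-contained reconstruction of the proof in that cited source, and the skeleton is correct: compress each semi-infinite constraint to $\sup_{\u_i\in\UU^s_i}[\aa_i(\u_i)\T\x-b_i(\u_i)]\le 0$, attach $y_i\ge 0$, pull the multipliers inside the suprema (the case $y_i=0$ is harmless since each supremum is finite by compactness), merge the decoupled blocks into one joint supremum over the compact convex product $\UU^s_0\times\cdots\times\UU^s_m$, and interchange $\inf_{\x\in\R^n_+}$ with that supremum by Sion's theorem, applied in the orientation where compactness sits on the maximization side; the integrand is linear in $\x$ and affine in each $\u_i$, so the hypotheses hold, and the extended-real value $-\infty$ causes no trouble. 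Your closing observation that no constraint qualification enters this \emph{equivalence}, in contrast to the separate identification of the common value with $\varphi_s$ (which the paper handles afterwards, via \cref{cor:CharacterizingRxB} under strong-duality assumptions), is also correct and matches the paper's division of labor.

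The one step you should not have dismissed as ``purely routine'' is the sign bookkeeping, because carried out literally against (\ref{eqn:UAModelPrimalWorst}) as printed --- constraints $\aa_i(\u_i)\T\x\le b_i(\u_i)$ --- it does \emph{not} come out verbatim. The Lagrangian is $\cc(\u_0)\T\x+\sum_i y_i\bigl(\aa_i(\u_i)\T\x-b_i(\u_i)\bigr)$, so after the minimax swap the inner infimum over $\x\in\R^n_+$ is finite precisely when $\cc(\u_0)+\sum_i y_i\,\aa_i(\u_i)\ge \oo$, and the surviving objective is $-\sum_i b_i(\u_i)y_i$. That is (\ref{eqn:DualBest}) only after the substitution $\y\mapsto-\y$; you do not land on the constraint $\sum_i\aa_i(\u_i)y_i\le\cc(\u_0)$ with objective $+\sum_i b_i(\u_i)y_i$ and $\y\in\R^m_+$ as displayed. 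The displayed dual-best problem corresponds to primal constraints of the form $\aa_i(\u_i)\T\x\ge b_i(\u_i)$, which is the convention of the cited reference, so the mismatch is an inherited inconsistency in the paper's sign conventions rather than a defect of your method --- but since you explicitly defer this check and then assert a verbatim match, the last line of your argument is false for (\ref{eqn:UAModelPrimalWorst}) taken literally. Either flip the primal inequality convention at the outset or record the change of variable $\y\mapsto-\y$ explicitly.
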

	Note that (\ref{eqn:UAModelPrimalWorst}) is convex in $\x$ as the inner suprema are convex functions in $\x$. Its deterministic counterpart  is obtained by merely finding explicit forms of those functions (a standard procedure in robust optimization). Thus, the dual of (\ref{eqn:UAModelPrimalWorst}) is in fact given by the dual of deterministic counterpart, and any condition for this deterministic counterpart to exhibit strong duality also implies strong duality between (\ref{eqn:UAModelPrimalWorst}) and (\ref{eqn:DualBest}). 	
	\begin{cor}\label{cor:CharacterizingRxB}
		Assume that strong duality holds between (\ref{eqn:UAModelPrimalWorst}) and (\ref{eqn:DualBest}) and that both problems attain their optimal value. Then the optimal primal-dual pairs $(\x;\bar{\u},\y)\in \R^n_+\times \UU^s\times \R^m_+$ of (\ref{eqn:UAModelPrimalWorst}) are characterized by
		\begin{align*}
			\sup_{\u_0\in \UU^s_{0}}\lrbr{\cc(\u_0)\T\x} &\leq \sum_{i=1}^{m}b_i(\bar{\u}_i)y_i\, , \hspace{6.2cm} &\mbox{(Zero Duality Gap)} \\
			\aa_i(\u_i)\T\x &\leq b_i(\u_i)\,  \mbox{ for all } \u_i \in \UU^s_i \, ,\mbox{ all }  i \in \irg{1}{m}\, ,  
			&\mbox{(Primal Feasibility)} \\
			\sum_{i=1}^{m}\aa_i(\bar{\u}_i)y_i &\leq \cc(\bar{\u}_0) \, . 
			&\mbox{(Dual Feasibility)}
		\end{align*}		
		These relations therefore characterize $\RR^s_{opt}$. \end{cor}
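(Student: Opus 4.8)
The plan is to show that the three displayed conditions (Zero Duality Gap, Primal Feasibility, Dual Feasibility) are together equivalent to the statement that $\x$ is robustly optimal for (\ref{eqn:UAModelPrimalWorst}) and that $(\bar\u,\y)$ is an optimal certificate for the dual (\ref{eqn:DualBest}). Since the hypotheses grant that strong duality holds and both problems attain their optima, this is essentially an optimality-characterization argument: under strong duality with attainment, a feasible primal point and a feasible dual point are simultaneously optimal \emph{if and only if} their objective values coincide. So the whole statement reduces to checking that the three conditions are exactly ``primal feasible,'' ``dual feasible,'' and ``objective values match (hence both optimal),'' repackaged in the notation of the problem.

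First I would read off what ``primal feasible'' means for (\ref{eqn:UAModelPrimalWorst}): this is precisely the Primal Feasibility block, $\aa_i(\u_i)\T\x \leq b_i(\u_i)$ for all $\u_i \in \UU^s_i$ and all $i\in\irg{1}{m}$, together with $\x\in\R^n_+$ (recorded in the membership $\x\in\R^n_+$ of the claimed primal-dual pair). Next I would read off what ``dual feasible'' means for (\ref{eqn:DualBest}): with $\y\in\R^m_+$ and $\bar\u_i\in\UU^s_i$, the constraint $\sum_{i=1}^m \aa_i(\bar\u_i)y_i \leq \cc(\bar\u_0)$, which is exactly the Dual Feasibility line. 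The primal objective of a feasible $\x$ is $\sup_{\u_0\in\UU^s_0}\{\cc(\u_0)\T\x\}$ and the dual objective of $(\bar\u,\y)$ is $\sum_{i=1}^m b_i(\bar\u_i)y_i$; weak duality always gives primal objective $\geq$ dual objective, so imposing the reverse inequality in the Zero Duality Gap line forces equality.

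The core argument is then the standard complementary-slackness / optimality equivalence. In the forward direction, if $\x$ is primal optimal and $(\bar\u,\y)$ dual optimal, then by the strong-duality hypothesis their optimal values are equal, so the Zero Duality Gap inequality holds (in fact with equality), and feasibility of each gives the other two conditions. Conversely, if all three conditions hold, then $\x$ is primal feasible, $(\bar\u,\y)$ is dual feasible, and since weak duality gives $\sup_{\u_0}\{\cc(\u_0)\T\x\} \geq \sum_i b_i(\bar\u_i)y_i$ while the Zero Duality Gap line gives the reverse, the two objectives are equal; a feasible primal/dual pair with equal objectives is necessarily a pair of optimizers. Hence $\x\in\RR^s_{opt}$, establishing that the relations characterize the robustly optimal set.

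The only genuine subtlety---and the step I would treat most carefully---is the identification of (\ref{eqn:UAModelPrimalWorst}) with a convex deterministic counterpart whose duality with (\ref{eqn:DualBest}) is governed by \cref{thm:PrimalworstEqualsDualbest} (the ``primal worst equals dual best'' result). The remark following that theorem already notes that the inner suprema are convex in $\x$, that the deterministic counterpart is obtained by writing those suprema explicitly, and that the dual of (\ref{eqn:UAModelPrimalWorst}) coincides with (\ref{eqn:DualBest}); I would invoke this to justify that weak duality and the strong-duality/attainment hypotheses apply verbatim. Everything else is bookkeeping: matching the abstract ``feasible-and-equal-objectives implies optimal'' statement to the three displayed lines. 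No computation beyond this is required, so the proof is short and the attainment assumption in the hypothesis is what lets us assert that the optimal \emph{pair} $(\x;\bar\u,\y)$ actually exists rather than merely that optimal values agree.
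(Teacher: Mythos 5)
Your proposal is correct and matches the paper's approach: the paper's proof is simply ``immediate from \cref{thm:PrimalworstEqualsDualbest},'' and your argument is exactly the standard expansion of that remark --- weak duality between (\ref{eqn:UAModelPrimalWorst}) and its Lagrangian dual (identified with (\ref{eqn:DualBest}) by \cref{thm:PrimalworstEqualsDualbest}) turns the Zero Duality Gap inequality into equality of objectives, and a feasible primal--dual pair with equal objectives is optimal, with attainment guaranteeing such a pair exists. Nothing is missing; you have merely written out the details the authors left implicit.
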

	\begin{proof}
		The proof is immediate from \cref{thm:PrimalworstEqualsDualbest}.
	\end{proof}
	The inequalities in the above corollary may seem difficult, but we have to bear in mind that all of these have convex reformulations.
	Minding the discussion succeeding \cref{thm:PrimalworstEqualsDualbest} we can see that the assumptions in \cref{cor:CharacterizingRxB} can be guaranteed under well known conditions for full strong duality from convex duality theory.  
	
	Let us highlight at this point that not all optimistic counterparts are duals of robust counterparts as defined here. In fact, the structural aspect of (\ref{eqn:DualBest}) which allows for duality (and, thus, a convex reformulation) is that the uncertainty is column-wise (see \cite[Theorem~1]{soyster2013unifying}). This fact will allow us to reformulate certain models involving max-regret in Subsection~\ref{sec:Regret} below.

	\subsection{Finite reformulations for mixed-integer problems}\label{sec:Finite reformulations for mixed-integer problems}
	
	We now consider the case where $\XX_0 = \R^{n_1}_+\times\lrbr{0,1}^{n_2}$ so that (\ref{eqn:UAModelPrimalWorst}) is a mixed-integer robust optimization problem. In this case, robust duality theory does not apply. However, it is still possible to give a finite reformulation of (\ref{eqn:UPMModel}) with possibly exponentially many constraints which can be tackled via cutting plane algorithms, especially by using lazy constraints in a branch and bound algorithm. The following theorem is easily checked:
	
	\begin{thm}\label{thm:BinrayRef1}
		Assume $\XX_0 = \R^{n_1}\times\lrbr{0,1}^{n_2}$, then $(\x,s)$ is feasible for (\ref{eqn:UPMModel}) if and only if  the following constraints hold:
		\begin{align}\label{eqn:BinrayRef}
			\begin{split}
				\sup_{\u_0\in \UU^s_{0}}\lrbr{\cc(\u_0)\T\x}&\leq \varphi_j +(1-r^s_j)M_j \, ,\mbox{ all } j\in\irg{1}{S},\\
				\aa_i(\u_i)\T\x&\leq b_i(\u_i) \, \mbox{ for all } \u_i \in \UU^s_i \, ,\mbox{ all } i \in \irg{1}{m},\\
				\x&\in \XX_0,\ s \in \irg{1}{S},
			\end{split}
		\end{align}	
		where $M_j$ are upper bounds for $\varphi_{\max} - \varphi_j\, ,\mbox{ all }  j \in \irg{1}{S}$ with
		 		$\varphi_{\max}\coloneqq \sup\limits_{s\in [1:S]} \varphi_s$, while 		 
		$\varphi_j$ is the optimal value of (\ref{eqn:UAModelPrimalWorst}) under uncertainty regime $s=j$ and $\r_j$ is defined as in~(\ref{eqn:DefinitionOfr}).
	\end{thm}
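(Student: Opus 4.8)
The plan is to prove the stated equivalence by directly unfolding the definition of the robustly optimal set $\RR^s_{opt}$ and then exploiting the on/off behaviour of the binary indicators $r^s_j$. Recall that $(\x,s)$ is feasible for (\ref{eqn:UPMModel}) precisely when $s\in\irg{1}{S}$ and $\x\in\RR^s_{opt}$, and that membership $\x\in\RR^s_{opt}$ asserts two things at once: that $\x$ is feasible for (\ref{eqn:UAModelPrimalWorst}) under regime $s$ (i.e.\ $\x\in\XX_0$ and $\aa_i(\u_i)\T\x\leq b_i(\u_i)$ for all $\u_i\in\UU^s_i$ and all $i$), and that $\x$ attains the optimal value, i.e.\ $\sup_{\u_0\in\UU^s_0}\lrbr{\cc(\u_0)\T\x}=\varphi_s$. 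The second and third lines of (\ref{eqn:BinrayRef}) already coincide verbatim with the feasibility part, so the entire content of the proof is to show that, given feasibility, the first line of (\ref{eqn:BinrayRef}) is equivalent to the optimality condition $\sup_{\u_0\in\UU^s_0}\lrbr{\cc(\u_0)\T\x}=\varphi_s$.

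The key observation is how $r^s_j$ toggles the big-M term. For the single index $j=s$ we have $r^s_s=1$ by (\ref{eqn:DefinitionOfr}), so that constraint collapses to $\sup_{\u_0\in\UU^s_0}\lrbr{\cc(\u_0)\T\x}\leq\varphi_s$, whereas for every $j\neq s$ we have $r^s_j=0$ and the constraint reads $\sup_{\u_0\in\UU^s_0}\lrbr{\cc(\u_0)\T\x}\leq\varphi_j+M_j$, whose right-hand side is at least $\varphi_{\max}$ because $M_j\geq\varphi_{\max}-\varphi_j$. First I would prove the ``only if'' direction: if $(\x,s)$ is feasible for (\ref{eqn:UPMModel}), then $\x\in\RR^s_{opt}$ supplies lines two and three and gives $\sup_{\u_0\in\UU^s_0}\lrbr{\cc(\u_0)\T\x}=\varphi_s$; the $j=s$ constraint then holds with equality, while each $j\neq s$ constraint holds because $\varphi_s\leq\varphi_{\max}\leq\varphi_j+M_j$. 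This is the only place where the choice $M_j\geq\varphi_{\max}-\varphi_j$ is used.

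For the converse I would assume (\ref{eqn:BinrayRef}) and read off primal feasibility and $s\in\irg{1}{S}$ from lines two and three, so that $\x$ is a feasible point of (\ref{eqn:UAModelPrimalWorst}) under regime $s$. By the very definition of $\varphi_s$ as the infimum of the worst-case objective over such feasible points, we automatically obtain $\sup_{\u_0\in\UU^s_0}\lrbr{\cc(\u_0)\T\x}\geq\varphi_s$, while the $j=s$ instance of the first line gives the reverse inequality $\leq\varphi_s$. Combining the two pins the worst-case value exactly at $\varphi_s$, so $\x$ attains the optimum, $\x\in\RR^s_{opt}$, and $(\x,s)$ is feasible for (\ref{eqn:UPMModel}). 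The constraints indexed by $j\neq s$ play no role in this direction; they are redundant by construction.

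I do not expect a genuine obstacle, since the statement is ``easily checked''; the useful thing is to make the one delicate point explicit, namely that a merely feasible but \emph{suboptimal} $\x$ cannot slip through the formulation. It cannot, because the $j=s$ constraint pushes the worst-case value down to $\varphi_s$ while feasibility pushes it up to $\varphi_s$, forcing optimality. I would also remark that the equivalence is only meaningful for regimes $s$ at which (\ref{eqn:UAModelPrimalWorst}) attains $\varphi_s$, which is automatic whenever $\RR^s_{opt}\neq\emptyset$, in agreement with the feasibility characterization of (\ref{eqn:UPMModel}) noted after its definition.
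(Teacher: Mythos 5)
Your proof is correct and follows essentially the same route as the paper's: in the forward direction you use $\varphi_s\leq\varphi_{\max}\leq\varphi_j+M_j$ to discharge the constraints with $j\neq s$, and in the converse you sandwich $\varphi_s\leq\sup_{\u_0\in\UU^s_0}\lrbr{\cc(\u_0)\T\x}\leq\varphi_s$ using primal feasibility together with the $j=s$ constraint, exactly as the paper does. Your closing remark on attainment of $\varphi_s$ is a nice explicit acknowledgment of the caveat the paper states just after introducing (\ref{eqn:UPMModel}).
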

	\begin{proof}
		Let $(\x,s)$ be feasible for (\ref{eqn:UPMModel}), then $s \in \irg{1}{S}$ and $\x\in \RR^s_{opt}$ so that it fulfills the last two constraints of (\ref{eqn:BinrayRef}). Further, by this fact, we have $\sup_{\u_0\in \UU^s_{0}}\lrbr{\cc(\u_0)\T\x} = \varphi_s \leq\varphi_{\max}\leq \varphi_k + M_k$
		for all $k \in \irg{1}{S}\setminus \lrbr{s}$. Thus $(\x,s)$ is feasible for (\ref{eqn:BinrayRef}). For the converse, assume $(\x,s)$ is feasible for (\ref{eqn:BinrayRef}) so that all but the $s$-th constraint are redundant and by definition and by feasibility we have $\varphi_s \leq \sup_{\u_0\in \UU^s_{0}}\lrbr{\cc(\u_0)\T\x}\leq \varphi_s$. So that indeed $\x\in \RR^s_{opt}$, and $(\x,s)$ is feasible for (\ref{eqn:UPMModel}).
	\end{proof}
	Finding appropriate $M_j$ for all $j \in \irg{1}{S}$ can be challenging. At least $\varphi_j$ can simply be computed by solving a robust optimization problem, that has to be computed anyway whenever one of the constraints from the first group in \cref{thm:BinrayRef1} is added. However, $\varphi_{\max}$ is the more challenging entity. One way to derive an upper bound would be to solve $\sup_{s\in [1:S]} \bar{\varphi}_s$ where $\bar{\varphi}_s$ is obtained by changing the outer infimum in $\varphi_s$ to a supremum. The resulting problem is a mixed integer bilinear problem that can be exactly reformulated as a mixed integer linear problem, which in turn can be solved using the respective standard solvers. In Appendix B we discuss an example where a cheaper bound is derived easily.
	However, if (\ref{eqn:UAModelPrimalWorst}) is such that the uncertainty in the constraints does not depend on $s$, we can achieve a general reformulation that does not require a big-M. Note that this covers the case  where there is no uncertainty in the constraints.
	\begin{thm}\label{thm:BinrayRef2}
		Assume $\XX_0 = \R^{n_1}_+\times\lrbr{0,1}^{n_2}$ and $\UU^s_i = \UU_i$ for $ i\in\irg{1}{m}$, i.e. only the uncertainty in the objective of (\ref{eqn:EndoUncer}) is affected by $s$. For all $j\in\irg{1}{S}$, select any $\z_j\in \RR^j_{opt}$. Then $(\x,s)\in \XX_0\times \irg 1S$ if feasible for (\ref{eqn:UPMModel}) if and only if the following constraints hold:
		\begin{align}\label{eqn:BinrayRef2}
			\begin{split}
				\sup_{\u_0\in \UU^s_{0}}\lrbr{\cc(\u_0)\T\x}&\leq \sup_{\u_0\in \UU^s_{0}}\lrbr{\cc(\u_0)\T\z_j} \, ,\mbox{ all }j\in\irg{1}{S}\, ,\\
				\aa_i(\u_i)\T\x&\leq b_i(\u_i) \, \mbox{ for all } \u_i \in \UU_i \, ,\mbox{ all } i \in \irg{1}{m}\, .\\
			\end{split}
		\end{align}
	\end{thm}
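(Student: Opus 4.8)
The plan is to fix the regime index $s$ and show that the two constraint groups in (\ref{eqn:BinrayRef2}) together characterize the membership $\x\in\RR^s_{opt}$, which by definition is exactly what feasibility of $(\x,s)$ for (\ref{eqn:UPMModel}) amounts to once $s\in\irg{1}{S}$ is already granted. The decisive structural fact I would exploit is that, under the standing assumption $\UU^s_i=\UU_i$ for all $i\in\irg{1}{m}$, the feasible region cut out by the second constraint group is the same for every regime. Hence each representative $\z_j\in\RR^j_{opt}$ is not only robustly optimal for its own regime $j$ but, being feasible for (\ref{eqn:UAModelPrimalWorst}) and lying in $\XX_0$, is also a feasible point of the robust counterpart under regime $s$. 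Note also that the $\z_j$ exist because the very selection in the statement presupposes $\RR^j_{opt}\neq\emptyset$, so all the optimal values $\varphi_j$ are attained.

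First I would treat necessity. If $(\x,s)$ is feasible for (\ref{eqn:UPMModel}), then $\x\in\RR^s_{opt}$, so $\x$ satisfies the robust constraints (the second group of (\ref{eqn:BinrayRef2}), which is regime-free) and attains the optimal worst-case value $\sup_{\u_0\in\UU^s_0}\cc(\u_0)\T\x=\varphi_s$. Since every $\z_j$ is feasible for regime $s$ and $\varphi_s$ is the infimal worst-case value over that feasible set, I obtain $\sup_{\u_0\in\UU^s_0}\cc(\u_0)\T\z_j\geq\varphi_s=\sup_{\u_0\in\UU^s_0}\cc(\u_0)\T\x$ for every $j\in\irg{1}{S}$, which is precisely the first constraint group.

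For sufficiency I would argue in reverse. Assume $(\x,s)$ satisfies both groups of (\ref{eqn:BinrayRef2}). The second group already gives primal feasibility, so $\x$ is a feasible point of (\ref{eqn:UAModelPrimalWorst}) under regime $s$ and therefore $\sup_{\u_0\in\UU^s_0}\cc(\u_0)\T\x\geq\varphi_s$. Reading the first constraint at the single index $j=s$ and using $\z_s\in\RR^s_{opt}$, i.e.\ $\sup_{\u_0\in\UU^s_0}\cc(\u_0)\T\z_s=\varphi_s$, delivers the reverse inequality $\sup_{\u_0\in\UU^s_0}\cc(\u_0)\T\x\leq\varphi_s$. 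The two bounds pin the worst-case value of $\x$ to $\varphi_s$, so $\x$ is optimal, $\x\in\RR^s_{opt}$, and $(\x,s)$ is feasible for (\ref{eqn:UPMModel}).

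The one point that deserves care --- and the reason all $S$ inequalities are written out instead of just the single $j=s$ one used above --- is that in the actual model $s$ is a decision variable, so the $j=s$ constraint cannot be singled out in advance. Imposing the whole family is harmless precisely because $\z_s$ minimizes the regime-$s$ worst-case objective over the common feasible set, giving $\sup_{\u_0\in\UU^s_0}\cc(\u_0)\T\z_s\leq\sup_{\u_0\in\UU^s_0}\cc(\u_0)\T\z_j$ for all $j$; thus the tightest of the $S$ right-hand sides is automatically the one at $j=s$, and enforcing all of them reproduces the exact bound $\varphi_s$ with no big-$M$ constant. Checking this monotonicity of the right-hand sides is where the regime-independence of the constraints enters a second time, and it is what makes (\ref{eqn:BinrayRef2}) clean; I expect it to be the only genuinely nontrivial step.
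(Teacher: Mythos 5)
Your proof is correct and takes essentially the same route as the paper's: necessity via the regime-independence of the constraints, which makes every $\z_j$ feasible under regime $s$ and hence gives $\sup_{\u_0\in \UU^s_{0}}\lrbr{\cc(\u_0)\T\z_j}\geq\varphi_s=\sup_{\u_0\in \UU^s_{0}}\lrbr{\cc(\u_0)\T\x}$; sufficiency by sandwiching the worst-case value of $\x$ between $\varphi_s$ (primal feasibility) and the $j=s$ inequality with $\sup_{\u_0\in \UU^s_{0}}\lrbr{\cc(\u_0)\T\z_s}=\varphi_s$. Your closing remark on the monotonicity of the right-hand sides is a harmless restatement of the necessity direction rather than an additional step the paper needs.
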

	\begin{proof}
		Assume $(\x,s)$ is feasible for (\ref{eqn:UPMModel}), then $\x\in \RR^s_{opt}$ so that it fulfills the last constraint of (\ref{eqn:BinrayRef2}). Also by this fact we have that $\varphi_s = \sup_{\u_0\in \UU^s_{0}}\lrbr{\cc(\u_0)\T\x}= \sup_{\u_0\in \UU^s_{0}}\lrbr{\cc(\u_0)\T\z_s}\leq \sup_{\u_0\in \UU^s_{0}}\lrbr{\cc(\u_0)\T\z_j}$ for all $j\in\irg{1}{S}\setminus\lrbr{s}$, since all $\z_j$ are feasible for (\ref{eqn:UAModelPrimalWorst}). Thus $(\x,s)$ is feasible for (\ref{eqn:BinrayRef2}). Conversely, assume $(\x,s)$ are feasible for (\ref{eqn:BinrayRef2}). We have $\sup_{\u_0\in\UU_{0}^s}\lrbr{\cc(\u_0)\T\z_s} \leq \sup_{\u_0\in \UU_{0}^s}\lrbr{\cc(\u_0)\T\x}\leq \sup_{\u_0\in \UU_{0}^s}\lrbr{\cc(\u_0)\T\z_s}$ where the first inequality holds since $\x$ is feasible for $\varphi_s$ and the second holds by feasibility for (\ref{eqn:BinrayRef2}). So that indeed $\x\in \RR^s_{opt}$, and $(\x,s)$ is feasible for (\ref{eqn:UPMModel}).
	\end{proof}

	\section{Modeling uncertainty preference}\label{sec:ModellingUncertaintyPreferenceviaMM}
	In this section we will discuss different choices for $\MM(\x,s)$ and various reformulations of the resulting optimization problems. The goal is to have exact reformulations or at least good relaxations that are amendable to various, general off-the-shelf solution techniques such as mixed-integer linear and conic optimization. For some of the models, row-wise uncertainty can be assumed without loss of generality, while in others that is not generally possible. We will point specifically, when this assumption can be made safely and when this is not the case.  	
	
	\subsection{Regret}\label{sec:Regret}
	
	We can think of situations where the ex-ante choice a decision maker would have made in hindsight heavily depends on the outcome of the uncertainty process, while in other situations, a worst-case robust choice is viable in cases other than the worst case. In the former situation, decision makers run the risk of regretting their decision and may prefer an uncertainty regime with low potential regret. The concept of regret is well studied in the robust optimization literature (see \cite{kouvelis1997robust}). The min-max-regret approach has garnered much attention as an alternative concept to the min-max approach in robust optimization. However, our aim is not to optimize the maximum regret in an uncertain linear problem. Rather we seek a choice of the uncertainty regime such that the robustly optimal solution performs well with respect to its maximum regret.
	
	Since a decision maker prefers uncertainty regimes with small possible regret, we now consider a model that introduces a cost for running the risk of high regret. For fixed $s\in\irg{1}{S}$ and $\x\in \RR^s_{opt}$ the worst-case regret associated with this solution is given by
	\begin{align*}
		\rho(\x,s) \coloneqq \sup_{\u\in \UU^s}\left [ \cc(\u)\T\x -\inf_{\y\in \R^n_+} \ \lrbr{\cc(\u)\T\y \colon \aa_i(\u)\T\y\leq b_i(\u)\, ,\mbox{ all }  i \in \irg{1}{m}}\right ]
	\end{align*}
	that is, the worst-case difference between the performance of $\x$ and the performance we could have achieved in hindsight. The regret function does not have an equivalent row-wise formulation based on the projection method, which is valid for the robust counterpart.
	
	The model that accounts for the worst-case regret of the outcome of the uncertainty under a chosen uncertainty regime can be formulated as:
	\begin{align}\label{eqn:REG-UPM}\tag{REG-UPP}
		\begin{split}
			&\inf_{\x,s,\rho}  \left [ \gc_s + \ga\sup_{\u_0\in \UU^s_{0}}  \lrbr{\cc(\u_0)\T\x} + \beta(\rho)\right ]\\
			\mathrm{s.t.:}\ \rho \geq  &\sup_{\u\in \UU^s}\lrbr{ \cc(\u)\T\x -\inf_{\y\in \R^n_+} \ \lrbr{\cc(\u)\T\y \colon \aa_i(\u)\T\y\leq b_i(\u)\, , \, \mbox{ all } i \in \irg{1}{m}}}\, ,\\
			\x &\in \RR^s_{opt},\ s \in \irg{1}{S}\, .
		\end{split}
	\end{align}
	Here, $\beta(\cdot)$ is a convex, non-decreasing function which introduces a cost on the regret associated with a chosen solution and $\rho$ is an auxiliary epigraphical variable. Easy viable choices for $\beta(\cdot)$ are the identity and simple linear scalings, but more complicated choices are possible without leaving the realm of convex optimization. The last constraint, despite being convex, presents a major complication in general, although in special cases there are manageable reformulations and approximations. In the remainder of the section we discuss problems with appropriate special structure. In Subsection~\ref{sec:SPIntervallUncertainty} we also use a reformulation from literature that is specific to the shortest path problem.
	
	\subsubsection{Exact reformulation in case of column-wise uncertainty}
	Note that the innermost supremum-problem in (\ref{eqn:REG-UPM}) is the optimistic counterpart of the uncertain linear problem (plus an additional robust objective term), which in general can be a nonconvex problem but in special cases is in fact convex, by robust duality theory. Obviously, this is a useful circumstance for reformulating  (\ref{eqn:REG-UPM}), but caution must be taken since not every optimistic counterpart is the dual of a robust optimization problem (see e.g \cite[Example 2.1, Remark 3.1]{beck2009duality}) and therefore convex. In fact, it was proved in \cite[Theorem 1]{soyster2013unifying} that an optimistic counterpart of an uncertain linear optimization problem with column-wise uncertainty and non-negative decision vector has a convex feasible set, that is, regarding the decision variable $\x$. Further, it was proved that such a problem is the dual of a robust counterpart of an uncertain linear optimization problem with row-wise uncertainty. Thus, in order for the minimization in the definition of $\rho(\x,s)$ to be the "dual best" of a robust linear optimization problem we need the following assumption on the underlying uncertain problem.
	\begin{asm}\label{asm:BoxUn}
		The underlying uncertain linear optimization problem has
		\begin{itemize}
			\item[a)] no uncertainty in the objective, i.e. $\cc(\u) = \cc_0$,
			\item[b)] column-wise uncertainty,
			\item[c)] the domain of $\x$ given by $\XX_0 = \R^n_+$,
		\end{itemize}
		and can thus be written as
		\begin{align}\label{eqn:ColumnWiseRobust}
			\inf_{\x}\lrbr{\cc_0\T\x\colon \x \in \R^n_+,\  \sum_{i=1}^{n} \aa_{i}(\u_i)x_i  \leq \b(\u_{n+1}) } \mbox{ where }\ \u_i \in \UU^s_i\, ,\mbox{ all }  i \in \irg{1}{n+1}.
		\end{align}
		where $\b(\u_{n+1}) \coloneqq \b_0+\Bb\u_{n+1}$.
		For the above formulation we assume that
		\begin{itemize}
			\item[d1)] either there is an $\x\in\R^n_+$ such that there exist $\u_i\in\mathrm{int}\left(\UU^s_i\right)\, ,\mbox{ all } i\in\irg{1}{n+1}$ so that the inequalities are fulfilled, where the $\mathrm{int}$-operator can dropped if the set is polyhedral, or
						\item[d2)] problem (\ref{eqn:ColumnsWiseDual}), i.e. the dual of (\ref{eqn:ColumnWiseRobust})  in the robust duality sense, has a Slater point.
		\end{itemize}
	\end{asm}
	The following lemma provides a useful sufficient condition for \cref{asm:BoxUn} \textit{d1)}:
	\begin{lem}\label{lem:SufficientCondforAss1}
		Consider the following condition on (\ref{eqn:ColumnWiseRobust}):
		\begin{itemize}
			\item[d')] There is an $\x\in\R^n_+$ such that there are $\u_i\in\UU^s_i\, ,\mbox{ all } i\in\irg{1}{n+1}$, so that the inequalities are fulfilled strictly.
		\end{itemize}
		We have that d') implies \cref{asm:BoxUn} \textit{d1)}.
	\end{lem}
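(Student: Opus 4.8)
The plan is to exploit two facts: the affine (hence continuous) dependence of the constraint data on $\u$, and the fact that interior points of a convex set are dense in it. Let $\bar\x\in\R^n_+$ together with $\bar\u_i\in\UU^s_i$, $i\in\irg{1}{n+1}$, be witnesses for d'), so that the vector inequality $\sum_{i=1}^{n}\aa_i(\bar\u_i)\bar x_i < \b(\bar\u_{n+1})$ holds componentwise strictly.

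First I would handle the polyhedral case separately. There the $\mathrm{int}$-operator is dropped from d1), which then merely asks for some $\x\in\R^n_+$ and $\u_i\in\UU^s_i$ satisfying the inequalities weakly; the witnesses $(\bar\x,\bar\u_i)$ already do this, since a strict inequality is in particular a weak one. Hence d1) holds immediately with the same data.

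For the non-polyhedral case I would keep $\x=\bar\x$ fixed and only push the uncertainty vectors into the interior. Using that each $\UU^s_i$ is convex with nonempty interior --- so that $\mathrm{int}(\UU^s_i)$ is dense in $\UU^s_i$ --- I can pick $\u_i^\eps\in\mathrm{int}(\UU^s_i)$ with $\norm{\u_i^\eps-\bar\u_i}$ arbitrarily small. Since $\u\mapsto\aa_i(\u)=\aa^0_i+\Ab_i\u$ and $\u\mapsto\b(\u)=\b_0+\Bb\u$ are continuous, the vector $\sum_{i=1}^{n}\aa_i(\u_i^\eps)\bar x_i-\b(\u_{n+1}^\eps)$ converges to $\sum_{i=1}^{n}\aa_i(\bar\u_i)\bar x_i-\b(\bar\u_{n+1})$, which is strictly negative; hence for small enough perturbations it stays componentwise negative. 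In particular the inequalities of (\ref{eqn:ColumnWiseRobust}) are satisfied by $\bar\x\in\R^n_+$ with all $\u_i^\eps\in\mathrm{int}(\UU^s_i)$, which is precisely \cref{asm:BoxUn} d1).

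I expect the only delicate point to be the density step, namely that a convex set with nonempty interior satisfies $\mathrm{cl}(\mathrm{int}\,\UU^s_i)=\UU^s_i$, so that the boundary witnesses $\bar\u_i$ from d') admit interior approximants. This is where nonemptiness of the interior (full-dimensionality of $\UU^s_i$) is tacitly used; the remainder is a routine preservation-of-strict-inequality argument under a vanishing continuous perturbation. If one prefers to read $\mathrm{int}$ as relative interior, the same argument carries over unchanged, since the relative interior of a nonempty convex set is always nonempty and dense, dispensing even with the full-dimensionality caveat.
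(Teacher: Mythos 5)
Your proof is correct and takes essentially the same approach as the paper's: both arguments perturb the d') witnesses $\bar\u_i$ into $\mathrm{int}(\UU^s_i)$ and use the strictly positive slack to keep the inequalities of (\ref{eqn:ColumnWiseRobust}) valid (the polyhedral case being immediate since d1) then drops the $\mathrm{int}$-operator). The paper merely makes your density-plus-continuity step explicit and quantitative: it moves $\bar\u_i$ along the segment toward a fixed interior point $\u^0_i$, using that $\bar\u_i+\gl(\u^0_i-\bar\u_i)\in\mathrm{int}(\UU^s_i)$ for all $\gl\in(0,1]$ (Rockafellar's line-segment theorem, which is also what underlies the density fact $\mathrm{cl}(\mathrm{int}\,\UU^s_i)=\UU^s_i$ that you invoke), and then chooses $\gl$ small enough that the resulting perturbation is absorbed by the slack vector $\boldsymbol{\eps}=\b(\bar\u_{n+1})-\sum_{i=1}^{n}\bar x_i\aa_i(\bar\u_i)\in\mathrm{int}(\R^m_+)$.
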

	\begin{proof}
		Take any $\u^0_i \in \mathrm{int}(\UU^s_i)\, ,\mbox{ all }  i \in \irg{1}{n+1}$, then for $\bar{\u}_i \coloneqq\u^0_i-\u_i$ we have $\u_i+\gl\bar{\u}_i \in \mathrm{int}(\UU^s_i)$ for any $\gl \in (0,1]$ by \cite[Theorem 6.1]{rockafellar2015convex}. Define $\boldsymbol{\eps} \coloneqq \b(\u_{n+1})-\sum_{i=1}^{n} x_i\aa_i(\u_i)$. Since by assumption $\boldsymbol{\eps}\in \mathrm{int}(\R^m_+)$, we can choose $\gl\in (0,1]$ such that $\gl\left(\sum_{i=1}^{n}x_i\Ab_i\bar{\u}_i-\Bb\bar{\u}_{n+1}\right)\leq \boldsymbol{\eps}$. We then have
		\begin{align*}
			&\sum_{i=1}^{n} x_i\aa_i(\u_{i}+\gl\bar{\u})-\Bb\left(\u_{n+1}+\gl\bar{\u}_{n+1}\right)\\
						=& \sum_{i=1}^{n} x_i\left(\aa^0_{i} + \Ab_{i}\u_{i} \right) -\Bb\u_{n+1} +  \gl\left(\sum_{i=1}^{n}x_i\Ab_i\bar{\u} -\Bb\bar{\u}_{n+1}\right)\\
			\leq& \sum_{i=1}^{n} x_i\aa_i(\u_i)-\Bb\u_{n+1} +\boldsymbol{\eps} = \b_0
		\end{align*}		
		and $ \u_{i}+\gl\bar{\u}_i \in \mathrm{int}(\UU^s_i)\, ,\mbox{ all }  i\in \irg{1}{n+1}$.
	\end{proof}
	We can generally assume that $\lrbr{\oo}\in \mathrm{int}(\UU^s)$ since we can always shift the set and adapt the constant terms in $\aa(\u),\cc(\u)$ and $\b(\u)$ accordingly. Thus condition~d') holds if the feasible set of the nominal problem has nonempty interior, which often is established more easily. The more restrictive condition is $b)$, that is column-wise uncertainty. However, this case features prominently in literature and discussions on its relevance can be found in \cite{soyster2013unifying,soyster2016integration}.
	
	Note that the optimistic counterpart of (\ref{eqn:ColumnWiseRobust}) given by
	\begin{align}
		\begin{split}\label{eqn:ColumnWiseOptimist}
			\inf_{\x,\u} \lrbr{ \cc_0\T\x
			\sum_{i=1}^{n} x_i\aa_i(\u_i)  \leq \b(\u_{n+1}),\
			\x \in \R^n_+,\ \u_i\in\UU^s_i\, ,\mbox{ all } i \in \irg{1}{n+1} }\, ,
		\end{split}
	\end{align}
	is precisely of the form (\ref{eqn:DualBest}) so that by \cite[Theorem 3.1]{beck2009duality} we can close the duality gap between (\ref{eqn:ColumnWiseOptimist}) and
	\begin{align}\label{eqn:ColumnsWiseDual}
		\begin{split}\sup_{\z} \lrbr{ \inf_ {\u_{n+1}\in \UU^s_{n+1}} \lrbr{\b(\u_{n+1})\T\z} \colon
			c^0_i \geq \aa_i(\u_i)\T\z \, \mbox{ for all } \u_i \in \UU^s_i\, ,\mbox{ all }  i \in \irg{1}{n}\, ,\
			-\z \in \R^m_+} \, ,
		\end{split}
	\end{align}
	if regularity conditions are met (e.g. Slater's condition or polyhedrality).
	Mindful of this fact, we see that \cref{asm:BoxUn} suffices to prove the following theorem.
	\begin{thm}\label{thm:WBDCwiseRef}
		Assume that \cref{asm:BoxUn} holds. Then (\ref{eqn:REG-UPM}) is equivalent to the following optimization problem
		\begin{align}
			\begin{split}
				&\inf_{\x,\z,s,\rho} \left [\gc_s  + \ga \cc_0\T\x + \beta(\rho)\right] \\
				\mathrm{s.t.:}\ &\rho \geq \cc_0\T\x - \b\T\z + \tau\, ,\ -\z \in\R^n_+\, ,\ \x\in \RR^s_{opt}\, ,\ s \in \irg{1}{S}\,, \\
				&\left(\tau,\z\T\Bb\right)\T \in\left(\KK^s_{n+1}\right)^*\, ,\
				\left(c^0_i-(\aa^0_i)\T\z,\ -\z\T\Ab_i\right)\T \in \left(\KK^s_i\right)^*\, ,\mbox{ all }  i \in \irg{1}{n}\, .\\
			\end{split}
		\end{align}
	\end{thm}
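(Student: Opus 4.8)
The plan is to reduce the semi-infinite regret constraint of (\ref{eqn:REG-UPM}) to the finite conic system in the statement by feeding the optimistic-counterpart duality assembled around \cref{asm:BoxUn} into the definition of $\rho(\x,s)$. First I would invoke part~a) to note $\sup_{\u_0\in\UU^s_0}\lrbr{\cc(\u_0)\T\x}=\cc_0\T\x$, which turns the robust objective term into $\ga\,\cc_0\T\x$. The substance is in the regret constraint. Writing the underlying problem in the column-wise form (\ref{eqn:ColumnWiseRobust}) and using that $\cc_0\T\x$ is independent of $\u$, the worst-case regret decouples:
\begin{align*}
	\rho(\x,s)=\sup_{\u\in\UU^s}\Big[\cc_0\T\x-\inf_{\y\in\R^n_+}\big\{\cc_0\T\y\colon \sum_{i=1}^{n}\aa_i(\u_i)y_i\leq\b(\u_{n+1})\big\}\Big]=\cc_0\T\x-v_s\, ,
\end{align*}
where $v_s$ is exactly the optimal value of the optimistic counterpart (\ref{eqn:ColumnWiseOptimist}) (the inner $\inf_\u\inf_\y$ being that problem, with $\y$ as its decision vector). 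Thus the regret is affine in $\x$ minus a regime-dependent constant.

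The second step replaces $v_s$ by the value of the dual (\ref{eqn:ColumnsWiseDual}). Since (\ref{eqn:ColumnWiseOptimist}) is of the form (\ref{eqn:DualBest}), \cref{thm:PrimalworstEqualsDualbest} together with \cite[Theorem 3.1]{beck2009duality} applies, and parts d1)/d2) of \cref{asm:BoxUn} (via \cref{lem:SufficientCondforAss1} for d1)) supply precisely the Slater/polyhedrality regularity needed to close the gap and attain both values. Hence $v_s=\sup_{\z}\lrbr{\inf_{\u_{n+1}}\lrbr{\b(\u_{n+1})\T\z}\colon \aa_i(\u_i)\T\z\leq c^0_i\text{ for all }\u_i\in\UU^s_i,\text{ all }i,\ -\z\in\R^m_+}$. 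Substituting into $\rho\geq\rho(\x,s)=\cc_0\T\x-v_s$ and promoting $\z$ to a decision variable, the outer $\sup_\z$ can be dropped: because $\beta(\cdot)$ is non-decreasing and we minimize, the optimizer drives $\z$ to the maximizer, so the single inequality $\rho\geq\cc_0\T\x-\inf_{\u_{n+1}}\lrbr{\b(\u_{n+1})\T\z}$ together with dual feasibility of $\z$ faithfully reproduces the original constraint.

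It remains to render the two semi-infinite pieces finite by support-function (conic) duality over $\UU^s_i=\lrbr{\u\colon(1,\u)\T\in\KK^s_i}$. Splitting $\b(\u_{n+1})=\b_0+\Bb\u_{n+1}$ and introducing the epigraph variable $\tau$, the bound $-\inf_{\u_{n+1}\in\UU^s_{n+1}}\lrbr{(\Bb\T\z)\T\u_{n+1}}\leq\tau$ is equivalent to $(\tau,\z\T\Bb)\T\in(\KK^s_{n+1})^*$, which gives $\rho\geq\cc_0\T\x-\b_0\T\z+\tau$. The same argument turns each robust dual-feasibility constraint $\aa_i(\u_i)\T\z\leq c^0_i$ for all $\u_i\in\UU^s_i$ into $(c^0_i-(\aa^0_i)\T\z,\,-\z\T\Ab_i)\T\in(\KK^s_i)^*$. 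In both conversions the recession (the $t=0$ slice of the cone) contributes nothing by the boundedness implication (\ref{eqn:0coneimplies0}). Collecting these with $-\z\in\R^n_+$ and $\x\in\RR^s_{opt}$, $s\in\irg{1}{S}$, yields exactly the stated reformulation.

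The main obstacle I anticipate is the sign/closure bookkeeping rather than any deep argument. One must check that the negative sign carrying $\sup_\z$ into a $\geq$-constraint, combined with monotonicity of $\beta$ and the minimization sense, genuinely justifies dropping the supremum; and that each inner optimization over $\u$ flips to the correct dual-cone membership with $\tau$ on the proper side, with recession directions annihilated by (\ref{eqn:0coneimplies0}). The gap-closure itself is routine once \cref{asm:BoxUn} is in force, so the care lies entirely in these directional and attainment details.
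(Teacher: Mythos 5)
Your proposal is correct, but it takes a genuinely different route from the paper's own proof --- in fact, precisely the route the authors acknowledge and deliberately decline in the remark following their proof. You decompose the regret as $\rho(\x,s)=\cc_0\T\x-v_s$, identify $v_s$ with the value of the optimistic counterpart (\ref{eqn:ColumnWiseOptimist}), cite \cref{thm:PrimalworstEqualsDualbest} (i.e.\ \cite[Theorem 3.1]{beck2009duality}) to equate $v_s$ with the value of the robust dual (\ref{eqn:ColumnsWiseDual}), and then grind the semi-infinite constraints of (\ref{eqn:ColumnsWiseDual}) through the standard deterministic-counterpart machinery. The paper instead argues self-containedly: it convexifies the bilinear optimistic counterpart (\ref{eqn:RegretasCX-OptimisticCounterpart}) via the homogenizing substitution $w^0_i=y_i$, $\w_i=y_i\u_i$ (with inverse $\u_i=\w_i/w^0_i$, the case $w^0_i=0$ handled by (\ref{eqn:0coneimplies0})), verifies that a point as in \cref{asm:BoxUn}~d1) transfers to a Slater point of the lifted conic program (\ref{eqn:ConvexReformulationofOptimisticCoutnerpart}), and then dualizes that conic program in a single step; the remark after the proof notes that the resulting sup-problem is the deterministic counterpart of (\ref{eqn:ColumnsWiseDual}), so the theorem ``actually rests on robust duality,'' exactly as your argument makes explicit. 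What your route buys is brevity and a clean conceptual picture (regret equals $\cc_0\T\x$ minus a regime-dependent dual value); what the paper's route buys is self-containedness and transparency about where each hypothesis bites --- in particular, a Slater condition is not directly meaningful for the nonconvex bilinear problem (\ref{eqn:RegretasCX-OptimisticCounterpart}), and the lifting is precisely what converts d1) into an honest Slater condition, a step your citation-based argument absorbs into the black box of the robust-duality theorem.

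Two minor slips, neither fatal: \cref{lem:SufficientCondforAss1} proves that d') implies d1), so it is not invoked ``for d1)'' when d1) is already hypothesized; and your step of ``driving $\z$ to the maximizer'' requires attainment of the dual supremum, which is guaranteed under d1) (primal Slater) but under d2) alone needs the usual $\eps$-argument together with continuity of $\beta$ --- a subtlety the paper's proof glosses over to exactly the same degree.
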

		\begin{proof}
		Recall the definition of the max-regret function:
		\begin{align*}
			\rho(\x,s)=& \sup_{\u\in \UU^s}\lrbr{ \cc(\u)\T\x -\inf_{\y\in \R^n_+} \ \lrbr{\cc(\u)\T\y \colon \aa_i(\u)\T\y\leq b_i(\u)\, ,\mbox{ all } i \in \irg{1}{m}}} \ \nonumber
		\end{align*}
		which under \cref{asm:BoxUn} reduces to
		\begin{align}\label{eqn:RegretasCX-OptimisticCounterpart}
			\rho(\x,s)=\ & \cc_0\T\x-\inf_{\y,\u_i} \lrbr{\cc\T_0\y \colon \y\in\R^n_+,\ \u_i\in\UU^s_i,\ \sum_{i=1}^{n}y_i\left(\aa_i^0+\Ab_i\u_i \right)\leq \b_0+\Bb\u_{n+1}}.
		\end{align}
		We will first show equivalence between the above infimum and
		\begin{align}\label{eqn:ConvexReformulationofOptimisticCoutnerpart}
			\inf_{\w_0,\w_i,\u_{n+1}} \lrbr{\cc_0\T \w_0 \colon \left(w^0_i,\ \w_i\T\right)\T\in\left(\KK^s_{i}\right)^*,\ \u_{n+1}\in\UU^s_{n+1},\  \sum_{i=1}^{n}w^0_i\aa_i^0+\Ab_i\w_i \leq \b_0+\Bb\u_{n+1}},
		\end{align}
		minding the fact that by assumption, $\UU^s_i = \lrbr{\u_i \colon \left(1,\u_i\T\right)\T\in\KK^s_{i}}$ are compact.
		If $(\y,\u_1,\dots,\u_{n+1})$ is feasible for (\ref{eqn:RegretasCX-OptimisticCounterpart}), we set $w^0_i = y_i$ and $\w_i = y_i\u_i$ for $i \in \irg{1}{n}$ to get an equivalent solution for (\ref{eqn:ConvexReformulationofOptimisticCoutnerpart}). Conversely, assuming $(\w_0,\w_1,\dots,\w_n,\u_{n+1})$ is feasible for (\ref{eqn:ConvexReformulationofOptimisticCoutnerpart}), we set $y_i = w^0_i$ and $\u_i= \tfrac{\w_i}{w^0_i}$ if $w^0_i\neq0$ and $\u_i= \oo$ otherwise to obtain an equivalent solution for (\ref{eqn:RegretasCX-OptimisticCounterpart}) (here we use the fact that (\ref{eqn:0coneimplies0}) holds by assumption). By the same construction, any feasible point in (\ref{eqn:RegretasCX-OptimisticCounterpart}) with $\u_i \in \mathrm{int}\left(\UU^s_i\right)$ translates to a Slater point in (\ref{eqn:ConvexReformulationofOptimisticCoutnerpart}). So, in any case, by \cref{asm:BoxUn} d) we have strong duality with its dual given by
		$$\sup_{\z}\lrbr{\b\T\z-\tau: -\z\in \R^n_+, \left(\tau,\z\T\Bb\right)\T\in\left(\KK^s_{n+1}\right)^* , \left(c^0_i-(\aa^0_i)\T\z,\ -\z\T\Ab_i\right)\T\in \left(\KK^s_{i}\right)^* ,\mbox{ all }  i \in \irg{1}{n}}\, .$$
		This concludes the proof.
	\end{proof}
	The final supremum problem in the above proof is actually the deterministic counterpart of the "primal worst" (\ref{eqn:ColumnsWiseDual}). Thus the above theorem actually rests on  robust duality. Rather than referencing this in the proof, we chose to give a simplified, exhaustive proof, firstly for  the readers' convenience and secondly to illustrate how \cref{asm:BoxUn} ties in with the final result.

	\begin{rem}\label{rem:RemarkonNecessaryConditions}
			The necessity for the objective to be free from uncertainty stems from the fact that we require column-wise uncertainty in the max-regret function. The defining feature of column-wise uncertainty is that the $i$-th portion $\u_i$ of the uncertainty vector only interacts with the $i$-th entry of the decision variable or never interacts with any portion of the decision vector. If we introduced uncertainty into the objective this would still hold for the robust counterpart and the optimistic counterpart but not for the max-regret function. Indeed we then would have
	\begin{align*}
		\rho(\x,s)=\ & \sup_{\y,\u_i} \lrbr{\sum_{i=1}^{n}c_i(\u_i)(x_i-y_i) \colon \y\in\R^n_+,\ \u_i\in\UU^s_i,\ \sum_{i=1}^{n}y_i\left(\aa_i^0+\Ab_i\u_i \right)\leq \b_0+\Bb\u_{n+1}},
	\end{align*}
	where $\u_i$ interacts with $y_i$ and stands alone in the product $c_i(\u_i)x_i$. For our relaxation this would mean that we would have to model the interaction between $\w_i$ and $\u_i$ in (\ref{eqn:ConvexReformulationofOptimisticCoutnerpart}), since the latter variable would not disappear, and then find a convex reformulation for the resulting problem, a far more tedious task. This problem would persist even if we pushed the uncertainty into the constraints by means of an epigraphical variable.
	\end{rem}
		
		\subsubsection{RLT-based bounds}\label{sec:LP-Relax}
		
		Reformulation-linearization techniques (RLTs) are a well developed field in nonlinear optimization originally introduced in \cite{sherali1986rlt}. The idea is to generate additional constraints by multiplying existing constraints with terms of the form $\prod_{i\in \JJ_1}x_i$ and $\prod_{i\in \JJ_2}(1-x_i)$ for some appropriately chosen $\JJ_1,\JJ_2\subseteq \irg{1}{n}$ and replace the nonlinear terms with lifted variables i.e. $x_{i_1,\dots,i_n} = x_{i_1}\dots x_{i_n}$. This way one can obtain relaxations of good quality and in some cases even tight reformulations. In~\cite{sherali1990hierarchy} the authors proved that this strategy is capable of delivering hierarchies of LP-approximations for combinatorial optimization problems that eventually close the relaxation gap. However, the strategy can still be applied for more general problems, see \cite{sherali2013reformulation} for a full account.
		
		The usefulness of this approach for our purposes stems from the fact that, once a linear relaxation/reformulation is obtained, we can use LP duality in order to resolve the inner supremum defining the max-regret function and so obtain a finite reformulation. 	Speaking more formally, we construct a linear reformulation/relaxation with a  representation of the form below and a respective dual
		$$\sup_{\z\geq \oo}\lrbr{\f(\x,s)\T\z \colon \Ab\z = \b } = \inf_{\w}\lrbr{\b\T\w \colon \Ab\T\w \geq \f(\x,s)}\,. $$
		The final inequality in (\ref{eqn:REG-UPM}) can then be replaced by
		\begin{align*}
			\rho \geq \b\T\w,\ \Ab\T\w  \geq \f(\x,s)\, ,
		\end{align*}	
		as to obtain  an upper bound for the original problem, which can be tight in case the LP relaxation is exact.
		
		Though the construction of higher order approximations can be tedious, we have at least one example where a fairly simple relaxation already yields satisfactory results. We will discuss this example in the sequel of this section. Later in Section~\ref{sec:RSPP} we will apply this relaxation, also we will demonstrate in Section~\ref{sec:NumericalExperiments} that, despite its simplicity, it can yield reasonable approximation  quality.
	
	Consider the uncertain  binary linear problem
	\begin{align*}
		\inf_{\x\in\lrbr{0,1}^n}\lrbr{\cc(\u)\T\x \colon \Eb\x=\d} \quad \mbox{where }\u\in\UU\, .
	\end{align*}
	The robust counterparts of uncertain MILPs in general do not enjoy strong duality and thus we not have an easy way to resolve the inner supremum problem in (\ref{eqn:REG-UPM}) outside of special cases. In Section~\ref{sec:RSPP} we will study such cases, where the constraint matrix $\Eb\in \R^{m\times n}$ is totally unimodular. For discussion in this section we will solely focus on deriving a tractable relaxation of the max-regret function of the uncertain MILP, but already entertain the total unimodularity assumption on $\Eb$. We have
	\begin{align*}
		\rho(\x,s) = \sup_{\u\in\UU}\lrbr{\cc(\u)\T\x -\inf_{\y\in\lrbr{0,1}^n} \lrbr{\cc(\u)\T\y\colon \Eb\y=\d}}
	\end{align*}
	Assume that $\cc(\u)\coloneqq\cc_0+\Cb\u$ and that $\UU \coloneqq [-1,1]^q$. Denoting $\bar{\Cb} \coloneqq (\Cb,-\Cb)$, we have
	\begin{align*}
		\rho(\x,s)=&\sup_{\u\in[-1,1]^q,\y\in \R^n_+}\lrbr{ (\cc_0+\Cb\u)\T\x - \ (\cc_0+\Cb\u)\T\y \colon \Eb\y=\d} \\
		=&\sup_{\u\in [0,1]^{2q},\y\in \R^n_+}\lrbr{ (\cc_0+\bar{\Cb}\u)\T\x - \ (\cc_0+\bar{\Cb}\u)\T\y \colon \Eb\y=\d}.
	\end{align*}
	The first equality holds due to the total unimodularity assumption an the fact that the objective is bilinear. The second one is easily established. We see, that the optimal solutions are contained in $\{0,1\}^{n+2q}$. First, we introduce additional constraints by multiplying the linear constraints with each of the uncertainty parameters, i.e. $u_i\Eb\y  =  u_i\d$,
	all $i\in \irg{1}{q}$. We now lift the space of variables by replacing the bilinear  terms $u_iy_j$ with $\psi_{ij}$  and construct the McCormick envelope~\cite{McCormick1976} by introducing the constraints
	\begin{align*}
		\psi_{ij} \leq u_j,\ \psi_{ij} \leq y_i \, ,\
		\psi_{ij} \geq y_i+u_j-1  \, ,\mbox{ all }  j \in \irg{1}{q},\ i \in \irg{1}{n}\, .
	\end{align*}
	The end result is a  linear optimization problem the dual of which can be used  to  replace the inner supremum in (\ref{eqn:REG-UPM})  as discussed above.

	\subsection{Predictability}		
	We have constructed uncertainty regimes where the performance of the robust solution is more predictable than in others. This may be a desired property of a solution next to being robust, for example, if, after implementation of the solution, excess optimal value cannot be harnessed due to its unexpected realization (e.g. a delivery that arrives early and cannot be received). The desire to have a predictable outcome can also extend to the constraints. Being able to better estimate the resource consumption of an activity outside the worst case can be advantageous. Thus, a decision maker may prefer to operate under an uncertainty regime where the performance of the robust solution after implementation does not deviate from the robustly optimal value by too much, and where the resource consumption does not deviate too much across scenarios.	
	
	In order to have a predictable outcome, we need to reduce the gap between best-case and worst-case performance of a robust solution, arriving at the following model:
	\begin{align*}\label{eqn:PDA-UPP}\tag{PDA-UPP}
		\begin{split}
			&\inf_{\x,s,\ol{q}_i,\underline{q}_i}  \left [ \gc_s + \ga\sup_{\u_0\in \UU^s_{0}}  \lrbr{\cc(\u_0)\T\x} + \sum_{i=0}^{m}\beta_i(\ol{q}_i,\underline{q}_i)\right ] \\
			\mathrm{s.t.:}\
			&\ol{q}_0  \geq \sup_{\u_0\in\UU_0^s}\lrbr{\cc(\u_0)\T\x}, \ \underline{q}_0  \leq \inf_{\u_0\in\UU_0^s}\lrbr{\cc(\u_0)\T\x}\, ,\\
			&\ol{q}_i  \geq \sup_{\u_i\in\UU^s_i}\lrbr{\aa_i(\u_i)\T\x-b_i(\u_i)}, \ \underline{q}_i  \leq\inf_{\u_i\in\UU^s_i}\lrbr{\aa_i(\u_i)\T\x-b_i(\u_i)}\, ,\mbox{ all }  i\in \irg{1}{m}\, ,\\
			&\x  \in \RR^s_{opt}\, ,\ s \in \irg{1}{S}\, .
		\end{split}
	\end{align*}	
	Here, $\ol{q}_i$ and $\uline{q}_i$ are auxiliary epi- and hypographical variables, introduced for notational convenience only.
	 	The right-hand side suprema/infima are the worst- and best-case performances of a solution $\x$ under uncertainty regime $s$ in terms of objective function value and in terms of resource consumption. Since the differences between worst- and best-case performances are independent across rows, we may assume row-wise uncertainty, in a way similar to the robust counterpart.  Note that the suprema are convex functions in $\x$, as they are point-wise suprema of linear functions. By the same token, the infima are concave in $\x$. Thus, the respective constraints describe a convex set. The functions $\beta_i(\ol{q}_i,\uline{q}_i)$ are convex functions that put a cost on the distance between worst- and best-case performance of a feasible solution $\x$. Next to convexity we assume
	\begin{align}\label{eqn:Decreaswithdistance}
		\beta_i(\ol{q}_i,\uline{q}_i)\geq 0\, ,\quad 	\frac{\partial \beta_i(\ol{q}_i,\uline{q}_i) }{\partial \ol{q}_i} > 0\, ,\quad 	 \frac{\partial \beta(\ol{q}_i,\uline{q}_i) }{\partial \uline{q}_i} < 0 \ \mbox{for all} \  \ol{q}_i\geq\uline{q}_i\, .
	\end{align}
	A simple choice for $\beta_{i}$ would be $\beta_{i} \coloneqq \ol{q}_i-\uline{q}_i$ and in our applications we will exclusively work with this choice. However, more complicated choices are possible since in any case we will end up with a mixed-integer convex optimization problem. 	
	\begin{thm}\label{thm:Predictability}
		Problem (\ref{eqn:PDA-UPP}) is equivalent to the following mixed-integer conic optimization problem:
		\begin{align}\label{eqn:BCDModelRef}
			\begin{split}
				\inf_{\x,s,\rho_i} &\left [\gc_s+  \ga\!\left( \cc_0\T\x + \rho_0 \right)  + \beta_0(\cc_0\T\x+\rho_0,\cc_0\T\x-\gd_0)+\sum_{i=1}^{m}\beta_i\!\left( (\aa_0^i)\T\x - b_0^i +\rho_i,(\aa_0^i)\T\x -b_0^i-\gd_i\right)\right ] \\
				\mathrm{s.t.:}
				&\begin{pmatrix}
					\rho_0 ,
					-\x\T\Cb
				\end{pmatrix}\T \in \left(\KK^s_0\right)^*,\
				\begin{pmatrix}
					\gd_0,
					\x\T\Cb
				\end{pmatrix}\T \in \left(\KK^s_0\right)^*\, ,\ \x  \in \RR^s_{opt}\, ,\ s \in \irg{1}{S}\,, \\
				&\begin{pmatrix}
					\rho_i ,
					\b_i\T-\x\T\Ab_i
				\end{pmatrix}\T \in \left(\KK^s_i\right)^*, \
				\begin{pmatrix}
					\gd_i,
					\x\T\Ab_i-\b_i\T
				\end{pmatrix}\T \in \left(\KK^s_i\right)^* \, ,\mbox{ all }  i \in \irg{1}{m}\, .\\
			\end{split}		
		\end{align}
	\end{thm}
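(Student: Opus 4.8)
The statement asserts that the predictability model (\ref{eqn:PDA-UPP}) is equivalent to the conic program (\ref{eqn:BCDModelRef}). The plan is to resolve, one at a time, each of the four inner suprema/infima appearing in the constraints of (\ref{eqn:PDA-UPP}), using the conic-duality machinery from the beginning of the excerpt, and to substitute the resulting closed forms back into the objective. The key observation is that each right-hand side is an optimization of a \emph{linear} function of $\u_i$ over the compact conic-intersection set $\UU^s_i = \lrbr{\u_i \colon (1,\u_i\T)\T \in \KK^s_i}$, so each reduces to a single conic feasibility condition via the dual.

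\textbf{First} I would treat the worst-case objective term. Since $\cc(\u_0) = \cc_0 + \Cb\u_0$, we have $\sup_{\u_0\in\UU^s_0}\cc(\u_0)\T\x = \cc_0\T\x + \sup_{\u_0}\lrbr{(\Cb\u_0)\T\x \colon (1,\u_0\T)\T\in\KK^s_0}$. Writing this linear-conic program in the form (\ref{eqn:conicprimal}) and dualizing as in (\ref{eqn:conicdual}), the supremum equals $\inf\lrbr{\rho_0 \colon (\rho_0, -\x\T\Cb)\T \in (\KK^s_0)^*}$; compactness of $\UU^s_0$ together with the closedness/pointedness of $\KK^s_0$ gives strong duality and attainment, so $\ol{q}_0 \geq \sup_{\u_0}\cc(\u_0)\T\x$ becomes $\ol{q}_0 \geq \cc_0\T\x + \rho_0$ with the stated cone membership. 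The infimum term is handled identically, but with the sign of the linear part flipped, producing $(\gd_0, \x\T\Cb)\T \in (\KK^s_0)^*$ and $\uline{q}_0 \leq \cc_0\T\x - \gd_0$. \textbf{Then} I would repeat this verbatim for each constraint row $i \in \irg{1}{m}$, where the relevant linear functional is $\aa_i(\u_i)\T\x - b_i(\u_i) = (\aa^0_i)\T\x - b^0_i + (\x\T\Ab_i - \b_i\T)\u_i$; the worst case yields $(\rho_i, \b_i\T - \x\T\Ab_i)\T \in (\KK^s_i)^*$ and the best case the sign-flipped companion, exactly as in (\ref{eqn:BCDModelRef}).

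\textbf{Finally}, to pass from the epigraphical formulation to the objective as written, I would invoke the monotonicity hypothesis (\ref{eqn:Decreaswithdistance}): since $\beta_i$ is strictly increasing in $\ol{q}_i$ and strictly decreasing in $\uline{q}_i$, at optimality each bound constraint is tight, so one may substitute $\ol{q}_i = (\aa^0_i)\T\x - b^0_i + \rho_i$ and $\uline{q}_i = (\aa^0_i)\T\x - b^0_i - \gd_i$ (and the analogous objective-row expressions) directly into $\beta_i$, and likewise replace the worst-case objective term by $\cc_0\T\x + \rho_0$. This eliminates the auxiliary variables $\ol{q}_i,\uline{q}_i$ and reproduces (\ref{eqn:BCDModelRef}) exactly. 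The constraint $\x\in\RR^s_{opt}$ and $s\in\irg{1}{S}$ carry over unchanged.

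I expect the only genuine subtlety to be the strong-duality justification for each inner program: one must confirm that the conic primal-dual gap vanishes and the supremum is attained. Here this is routine, because each $\UU^s_i$ is \emph{compact} (so the primal is feasible and bounded with finite value), and the cones $\KK^s_i$ are closed and convex; attainment of the dual optimum follows from Slater's condition or polyhedrality as recalled after (\ref{eqn:conicdual}). This is precisely why the proof is "otherwise trivial" for a reader versed in robust optimization, and why the authors relegate it to the appendix — the mechanical dualization of four linear-conic subproblems plus the monotonicity argument for tightness is all that is required.
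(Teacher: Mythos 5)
Your proposal is correct and follows essentially the same route as the paper's appendix proof: dualize each linear sup/inf over the conic-intersection uncertainty set to obtain the membership conditions $(\rho,-\d\T)\T\in\KK^*$ (with $\d$ instantiated as $\x\T\Cb$ and $\x\T\Ab_i-\b_i\T$), justify zero gap via Slater's condition (the paper assumes nonempty interior of the uncertainty sets, matching your Slater/polyhedrality hedge), and then eliminate the epi- and hypographical variables. Your explicit appeal to the monotonicity conditions~(\ref{eqn:Decreaswithdistance}) to justify tightness at optimality is a point the paper leaves implicit when it states the variables are eliminated, so if anything your write-up is slightly more complete.
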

	\begin{proof}
	The proof relies on standard robust optimization arguments and is therefore relegated to the appendix.
	\end{proof}
	From the above theorem we can see that (\ref{eqn:PDA-UPP}) does not add much difficulty over solving (\ref{eqn:UPMModel}) with $\MM(\x,s)$ removed. 	 
	
	\subsection{Best-case performance}
	In \cref{exmp:ShortestPath} we see that there are choices on the uncertainty regime where the resulting robust solution exhibits good performance even in the best case. A decision maker may prefer uncertainty regimes where the conservatism inherent in the robust solution is redeemed by viable best-case performance of that solution. On the other hand, again applying this concept in terms of constraints, a decision maker may value the chance of saving resources in cases other than the worst case.	
	
	The model that can account for the best-case performance of the outcome of the uncertainty under a chosen uncertainty regime can be formulated as (using $\v$ instead of $\u$ to suggest that now uncertainty is not adverse):
	\begin{align}\label{eqn:BCP-UPP}\tag{BCP-UPP}
		\begin{split}
			\inf_{\v,\x,s,\underline{q}_i} &\left [ \gc_s + \ga\sup_{\u_0\in \UU_{0}}  \lrbr{\cc(\u_0)\T\x} + \beta\left(\underline{q}_0,\dots,\underline{q}_m\right)\right] \\
			\mathrm{s.t.:}\
			&\underline{q}_0  \geq \cc(\v)\T\x\,, \ \v \in\UU^s,\ \x \in \RR^s_{opt},\ s \in \irg{1}{S}\, \\
			&\underline{q}_i  \geq \aa_i(\v)\T\x-b_i(\v) \, ,\quad \hspace{0.05 cm}\mbox{ all }  i \in \irg{1}{m} \,.
		\end{split}
	\end{align}	
	The function $\beta(.)$ is assumed to be convex and nondecreasing in each argument; in our applications in later sections it is chosen to be the sum of $\uline{q}_i\, , \mbox{ all }i \in \irg{0}{m}$. Note that we cannot assume row-wise uncertainty, since not all rows can necessarily attain their minimum simultaneously. We would thus underestimate the optimal $\underline{q}_i $, all $i \in \irg{0}{m}$, if we were to use the projection method, that is valid for the robust counterpart (see \cite{ben2009robust}).
	The problem (\ref{eqn:BCP-UPP}) is a bilinear mixed-integer optimization problem and as such very challenging. In case $\XX_0 = \lrbr{0,1}^{n_2}$ one can again use McCormick linearization in order to obtain a problem that is manageable using C-MILP solvers.  In case that all coefficients of $\x$ depend on independent portions of the uncertainty vector $\v$ one can pre-compute and replace the uncertain coefficients with their respective minima. In the next section we will discuss an application where both of these strategies are feasible.
	
	\subsubsection{Exact reformulation in case of column-wise uncertainty}
	
	Analogous to the max-regret function, the constraints in (\ref{eqn:BCP-UPP}) can be reformulated exactly in case of column wise uncertainty. For the reformulation we do not need a duality argument so $\XX_0$ may be discrete. Also the structure of these "optimistic constraints" is simpler than the optimistic counterpart in the max-regret function, so that we can allow for uncertainty in the objective. Thus, compared to the case of max-regret, merely \cref{asm:BoxUn}  b) is needed for the reformulation discussed shortly. Note that under the assumption of column-wise uncertainty we can write the epigraphical constraints in (\ref{eqn:BCP-UPP}) as
	\begin{align*}
		\underline{q}_0 &\geq \sum_{i=1}^{n}c_i(\v_i)x_i, \quad
		\begin{pmatrix}
			\underline{q}_1,&\!\!\!\! \dots, &\!\!\!\!  \ \underline{q}_m
		\end{pmatrix}\T\geq \sum_{i=1}^{n}x_i\aa_i(\v_i)-\b(\v_{n+1}) \, .
	\end{align*}
	For the sake of notational simplicity we introduce
	\begin{align*}
		\hspace{-0.7cm}
		\bar{\aa}_i(\v_i) \coloneqq
		\begin{pmatrix}
			c_i( \v_i),&\!\!\!\! \aa_i(\v_i)\T
		\end{pmatrix}\T \, , i \in \irg{1}{n} \, , \quad
		\bar{\b}(\v_{n+1}) \coloneqq
		\begin{pmatrix}
			0,&\!\!\!\! \b(\v_{n+1})\T
		\end{pmatrix}\T \quad\mbox{and} \quad
		\underline{\q}\coloneqq \begin{pmatrix} \underline{q}_0,&\!\!\!\! \dots , &\!\!\!\! \underline{q}_m
		\end{pmatrix}\T.
	\end{align*}
	We can then find vectors $\bar{\aa}_i^0$  and matrices $\bar{\Ab}_i$
		of appropriate size such that $\bar{\aa}_i(\v_i) = \bar{\aa}_i^0+\bar{\Ab}_i\v_i$ for all $i \in \irg{1}{n}$. 
	We are now ready to state the following theorem.
	\begin{thm}\label{thm:BCreformulation}
		Assume that \cref{asm:BoxUn}  b) holds, so that (\ref{eqn:BCP-UPP}) can be written as
		\begin{align*}
			\begin{split}
				\inf_{\v_i,\x,s,\underline{\q}} &\left [ \gc_s + \ga\sup_{\u_0\in \UU_{0}}  \lrbr{\cc(\u_0)\T\x} + \beta\left(\underline{q}_0,\dots,\underline{q}_m\right)\right ] \\
				\mathrm{s.t.:}\
				&\underline{\q} \geq\sum_{i=1}^{n}x_i\bar{\aa}_i(\v_i)-\bar {\b}(\v_{n+1})\, , \ \v_i \in\UU^s_i\  \mbox{for } i \in \irg{1}{n+1},\
				\x \in \RR^s_{opt}\, ,\ s \in \irg{1}{S}\, .
			\end{split}
		\end{align*}
		Then  this problem is equivalent to
		\begin{align*}
			\begin{split}
				\inf_{\w_i,\v_i,\x,s,\underline{\q}} &\left[ \gc_s + \ga\sup_{\u_0\in \UU_{0}}  \lrbr{\cc(\u_0)\T\x} + \beta\left(\underline{q}_0,\dots,\underline{q}_m\right)\right ] \\
				\mathrm{s.t.:}\
				&\underline{\q} \geq\sum_{i=1}^{n}x_i\bar{\aa}_i^0+\bar{\Ab}_i\w_i-\bar{\b}(\v_{n+1})\, ,	\\
				&\begin{pmatrix}
					x_i,&\!\!\!\! \w_i\T
				\end{pmatrix}\T
				\in\KK^s_i \ \mbox{for } i\in\irg{1}{n}, \ \v_{n+1}\in\UU^s_{n+1}\, ,\ \x \in \RR^s_{opt}\, ,\ s \in \irg{1}{S}\, .
			\end{split}
		\end{align*}
	\end{thm}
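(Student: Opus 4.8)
The plan is to prove the asserted equivalence purely by a change of variables (a lifting), without invoking any duality; this mirrors the substitution already used in the proof of \cref{thm:WBDCwiseRef} to pass between (\ref{eqn:RegretasCX-OptimisticCounterpart}) and (\ref{eqn:ConvexReformulationofOptimisticCoutnerpart}). The only structural facts I need are that $\UU^s_i = \lrbr{\v_i \colon (1,\v_i\T)\T\in\KK^s_i}$ with $\KK^s_i$ a closed convex cone, that every admissible $\x$ has nonnegative entries (since $\XX_0\subseteq\R^n_+$), and the implication (\ref{eqn:0coneimplies0}). I would set up a value-preserving correspondence between feasible points of the two problems in both directions; since the two objectives coincide and depend only on $(\x,s,\underline{\q})$ (not on the $\w_i$, nor on $\v_i$ for $i\in\irg{1}{n}$), equivalence of the optimal values then follows at once.

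For the forward direction I take a feasible point $(\v_1,\dots,\v_{n+1},\x,s,\underline{\q})$ of the first problem and define $\w_i \coloneqq x_i\v_i$ for $i\in\irg{1}{n}$, keeping $\v_{n+1},\x,s,\underline{\q}$ unchanged. Because $x_i\geq0$ and $(1,\v_i\T)\T\in\KK^s_i$, conic scaling gives $(x_i,\w_i\T)\T = x_i(1,\v_i\T)\T\in\KK^s_i$, so the cone constraints hold. Using $\bar{\aa}_i(\v_i)=\bar{\aa}_i^0+\bar{\Ab}_i\v_i$ and $x_i\bar{\Ab}_i\v_i=\bar{\Ab}_i\w_i$, the epigraphical constraint $\underline{\q}\geq\sum_{i=1}^n x_i\bar{\aa}_i(\v_i)-\bar{\b}(\v_{n+1})$ becomes exactly $\underline{\q}\geq\sum_{i=1}^n x_i\bar{\aa}_i^0+\bar{\Ab}_i\w_i-\bar{\b}(\v_{n+1})$, the remaining constraints $\v_{n+1}\in\UU^s_{n+1}$, $\x\in\RR^s_{opt}$, $s\in\irg{1}{S}$ are untouched, and the objective value is unchanged.

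For the converse I take a feasible point of the lifted problem and must exhibit suitable $\v_i\in\UU^s_i$ with $x_i\v_i=\w_i$ for the first problem. If $x_i>0$ I set $\v_i\coloneqq\w_i/x_i$, so that $(1,\v_i\T)\T=x_i^{-1}(x_i,\w_i\T)\T\in\KK^s_i$, i.e. $\v_i\in\UU^s_i$, and $x_i\v_i=\w_i$. If $x_i=0$ then $(0,\w_i\T)\T\in\KK^s_i$, which by (\ref{eqn:0coneimplies0}) forces $\w_i=\oo$; I then pick any $\v_i\in\UU^s_i$, so that again $x_i\v_i=\oo=\w_i$. Reversing the computation of the forward step shows $\sum_{i=1}^n x_i\bar{\aa}_i(\v_i)=\sum_{i=1}^n x_i\bar{\aa}_i^0+\bar{\Ab}_i\w_i$, hence the original epigraphical constraint holds, and together with the unchanged $\v_{n+1},\x,s$ we obtain a feasible point of the first problem with the same objective value.

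The only delicate point is the boundary case $x_i=0$: here the naive definition $\v_i=\w_i/x_i$ is undefined, and the argument hinges on (\ref{eqn:0coneimplies0}) to guarantee $\w_i=\oo$, so that the (otherwise arbitrary) choice of $\v_i\in\UU^s_i$ is immaterial. This is precisely where the pointedness and boundedness hypotheses behind (\ref{eqn:0coneimplies0}), together with the nonnegativity of $\x$, enter; everything else is routine arithmetic. I would stress that only \cref{asm:BoxUn} b) (column-wise uncertainty) is used, since no regularity or Slater-type condition is required for a pure reformulation of this kind, in contrast to the max-regret case.
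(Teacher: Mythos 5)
Your proof is correct and matches the paper's intended argument: the paper's proof simply points to \cref{thm:WBDCwiseRef}, whose core is exactly your lifting $\w_i = x_i\v_i$ with the converse split on $x_i>0$ versus $x_i=0$, where (\ref{eqn:0coneimplies0}) forces $\w_i=\oo$. You also correctly note that, unlike the max-regret case, no duality or Slater-type condition is needed here, which is precisely why the paper's statement only requires \cref{asm:BoxUn}~b) and allows discrete $\XX_0$.
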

	\begin{proof}
		The proof is similar to that of \cref{thm:WBDCwiseRef}.
	\end{proof}

	\section{The shortest path problem with endogenous arc-weight uncertainty}\label{sec:RSPP}
	In the following section, we will apply our framework to the shortest path problem.  
	As we will see, this problem can manifest in different special cases that will allow us to demonstrate the applications of both \cref{cor:CharacterizingRxB} and \cref{thm:BinrayRef2}. We want to highlight the fact that he uncertain shortest path problem appears in pre-desaster network design, which has been previously discussed in the context of robust optimization with endogenous uncertainty (see \cite{nohadani2018optimization}).

	Whenever we present the full models we represent the bilevel and the $\eps$-constraint model at the same time, by explicitly presenting the respective constraints for $\x\in\RR^s_{opt}$, and in parentheses we present the $\eps$-constraint. We will maintain this style also in the succeeding section.
	\subsection{The uncertain shortest path problem and its robust counterpart}
	Consider a directed, weighted graph $G = (\VV,\AA)$ where $\VV$ is the set of vertices and $\AA$ the set of arcs. We denote by $c_i$ the arc weight of arc $i$. Vertex $1$ is marked as \textit{source} and vertex $n$ is marked as \textit{sink}. The shortest path problem (SPP) is the combinatorial optimization problem to find a path from the source to the sink such that the sum of the weights of the arcs crossed is minimized.
	It is well known that the SPP has an exact LP reformulation
	\begin{align*}
		\min_{\x \in \R^{|\AA|}_+} \lrbr{ \cc\T\x \colon \Eb\x = \d},
	\end{align*}		
	where each (directed) arc corresponds to a variable $x_k$ under some relabeling $\pi(k)=(i,j)$ and
	\begin{align*}
		e_{i,k} =
		\left\lbrace\begin{matrix}
			-1 & \mbox{if $\pi(k) = (i,j)$ for some $j$}\\
			1 & \mbox{if $\pi(k) = (j,i)$ for some $j$}\\
			0 & \mbox{otherwise}\\
		\end{matrix}\right.
	\end{align*}
	Also $\d = (-1,0,\dots,0,1)\T\in \R^{|\VV|}$.
	That integrality can be dropped stems from the fact that $\Eb$ is totally unimodular and $\d$ is integral. The dual of the linearized SPP is given by $\max_{\y\in \R^{|\VV|}} \lrbr{\d\T\y \colon \Eb\T\y \leq \cc }$. In case of endogenous uncertain arc weights we can formulate the endogenous uncertain SPP as
	\begin{align*}\label{eqn:EU-SSP}\tag{EU-SPP}
		\min_{\x \in \lrbr{0,1}^{|\AA|}} \lrbr{ \cc(\u)\T\x \colon \Eb\x = \d}, \ \mbox{where } \u\in\UU^s,
	\end{align*}
	and the robust counterpart (for fixed $s$) is given by
	\begin{align*}\label{eqn:RSPP}\tag{R-SPP}
		\min_{\x \in \lrbr{0,1}^{|\AA|}} \lrbr{\sup_{\u\in\UU^s} \lrbr{\cc(\u)\T\x} \colon \Eb\x = \d}.
	\end{align*}
	In (\ref{eqn:RSPP}) integrality can not be relaxed in general, since the objective is no longer linear, but convex. It was proved in \cite{bertsimas2004robust} that (\ref{eqn:RSPP}) under ellipsoidal uncertainty is in fact NP-hard, and \cite{yu1998robust} showed that NP-hardness for finitely generated uncertainty sets.
	One can formulate it as a conic MILP, but then we lose the duality needed for the characterization of $\RR^s_{opt}$. In special cases, however, the robust counterpart remains a  linear optimization problem  so that we can apply \cref{cor:CharacterizingRxB}, in other cases we can still work with \cref{thm:BinrayRef2}.
	
	In the remainder of this section we discuss various reformulations and approximations of (\ref{eqn:UPMModel}) for the uncertain SPP under different cases of box uncertainty. All the formulations will involve some interaction between terms in $s$ and $\x$, and since we can model these as bilinear terms between $\r$ and $\x$ where all these variables are binary and thus between zero and one, the big-M constant for the construction of the McCormick envelope is given by 1. Also we will, in this section, push the dependence on $s$ from the uncertainty set into the description of the uncertain objective coefficients, hence we consider $\cc_s(\u)$ while the uncertainty set will remain constant across $s\in \irg 1S$.
	
	\subsection{SPP under structured box uncertainty}	
	Define $\cc_s(\u)\coloneqq \cc^0_s + \Cb_s\u$ with $\cc^0_s \in \R_+^{|\AA|}$ and $\Cb_s \in \R^{|\AA|\times q}, \ s\in\irg{1}{S}$ and consider the following uncertainty set:
	$\UU_{\mathrm{Box}} \coloneqq \lrbr{\u\in \R^q \colon \|\u\|_{\infty} \leq 1 }\,$ ,
	with $\KK =\lrbr{ (v_0,\v )\in \R^{m+1} \colon \|\v\|_{\infty} \leq v_0}$ and $\KK^* = \lrbr{ (z_0,\z)\in \R^{m+1} \colon \|\z\|_{1} \leq z_0}$.
	General box uncertainty can managed by employing \cref{thm:BinrayRef2}.	However, there are at least two cases where (\ref{eqn:RSPP}) under box uncertainty can be reformulated as a  linear optimization problem.
	\begin{enumerate}
		\item[\textit{Interval uncertainty}:] $q=|\AA|$ and $\Cb= \mathrm{Diag}(\hat\cc)$ is diagonal; by flipping $u_i\to -u_i$ if necessary, we can always assume that all $\hat c_i\geq 0$. This is a special case of what we suggest to call
		\item[\textit{Correlated interval uncertainty}]  where $\Cb_s\in  \R^{|\AA|\times q}_+$. We consider
		\begin{align*}
			\min_{\x \in \lrbr{0,1}^{|\AA|}} \lrbr{ \sup_{\u\in\UU_{\mathrm{Box}}}\lrbr{\cc_s(\u)\T\x} \colon \Eb\x = \d}\, .\end{align*}	 \end{enumerate}	
	Since $\max\limits_{\| \u \|_\infty \le 1} \g\T\u = \|\g\|_1 = \e\T\g$, with $\e\in \R^q$  the all-ones vector, holds for all $\g\in \R^{q}_+$ and since $\g=\Cb_s\T\x\in \R^q_+$ under our assumptions, we have, by total unimodularity,
	\begin{align*}
		\min_{\x \in \lrbr{0,1}^{|\AA|}} \lrbr{ \sup_{\u\in\UU_{\mathrm{Box}}}\lrbr{\cc_s(\u)\T\x} \colon \Eb\x = \d}
		=\min_{\x \in \R^{|\AA|}_+} \lrbr{ ( {\cc}^0_s + \Cb_s\e)\T\x  \colon \Eb\x = \d}\, ,\end{align*}
	which is a  linear optimization problem. It is interesting to note that in the uncorrelated interval case where $\Cb_s= \mathrm{Diag}(\hat\cc)$ with diagonal entries $\hat c_j\geq 0$ we have
	$$ \max\limits_{u\in\UU_{\mathrm{Box}}} [c^0_{s,i}+ \hat c_j u_j] = c^0_{s,j}+ \hat c_j = [\cc^0_s + \Cb_s\e]_j\quad \mbox{for all }j\in  \AA \, ,$$
	so the above  linear optimization problem  has the worst-case realizations per arc as its objective coefficients.
	Therefore in both of these cases (\ref{eqn:RSPP}) can be exactly linearized, and we can characterize the respective optimal set using strong linear duality. The reason why we split the presentation into two parts is a discrepancy in case of maximum-regret, as explained in \cref{exmp:CorrelatedIntervallUncertainty} below.
	 
	\subsubsection{SPP with interval uncertainty}\label{sec:SPIntervallUncertainty}
	In case of endogenous interval uncertainty, the travel time on an individual arc $j$ is known to lie in an interval $[\underline{c}^i_s, \ol{c}^i_s]$ if uncertainty regime $s$ is active. Thus the worst and best-case outcomes do not depend on $\x$ and are easily identified as the outcomes where the travel time is on the upper and lower bound respectively.
	Let $s \in \irg{1}{S}$ and define $\underline{\cc}_s$, and $\ol{\cc}_s$, to be the  element-wise best-case, or worst-case realizations, respectively, of $\cc(\u)$ under uncertainty regime $s$. Following above logic, we arrive at a mixed-integer linear model for each of the uncertainty preferences:
	\vspace{-0.2cm}
	\begin{align*}
		&\min_{\x,s} \gc_s + \ga \ol{\cc}_s\T\x +  \MM(\x,s) \\
		\mathrm{s.t.:}\ &\ol{\cc}_s\T\x \leq \d\T\y\, , \ \Eb\x=\d\, ,\ \Eb\T\y\leq \ol{\cc}_s,\, \left( \ol{\cc}_s\T\x \leq \varphi^{*} + \eps\right) ,\
		\x\in \R^{|\AA|}_+,\ s \in \irg{1}{S}\, .
	\end{align*}
	Here  we used \cref{cor:CharacterizingRxB} in order to characterize $\RR^s_{opt}$. To accommodate the three different types of uncertainty preferences, we specify $\MM(\x,s)$ in the following ways (with the simplest choice corresponding to the specification $\beta(\ol q,\underline q)= \beta (\overline q - \underline q)$ in~(PDA-UPP), which can be generalized in a straightforward manner):
	\begin{align*}
		\mbox{Predictability: }\MM(\x,s) =& \left(\ol{\cc}_s-\underline{\cc}_s \right)\T\x\, ,\\
		\mbox{Best-case Performance : }\MM(\x,s) =& \ \underline{\cc}_s\T\x\, ,\\
		\mbox{Regret: }\MM(\x,s) =&\ \inf_{\z}\left\lbrace\ol{\cc}_s\T\x-\d\T\z \colon \Eb\T\z \leq \cc_{\x} \right\rbrace\,,
	\end{align*}
	where $\cc_{\x} \coloneqq \underline{\cc}_s+\sum_{i=1}^{|\AA|}x_i\,\e_i\e_i\T\left (\ol{\cc}_s-\underline{\cc}_s \right )$.
	The reformulation of the regret function was introduced in \cite{karacsan2001robust}, where the concept of regret was presented under the name of {\em maximum deviation robustness.} The intuition is that, in case of interval uncertainty, the regret is maximized if the ex-post solution may only experience the best-case realization on all arcs except for those who are also crossed by the ex-ante solution.
	
	We see that the problem can be solved exactly using MILP-solvers without any further adaptation. Thus, there is at least one setting where our framework can be applied in its entirety, while never leaving the realm of basic and well-developed optimization techniques.
	
	\subsubsection{SPP with correlated interval uncertainty}\label{sec:SPP with correlated interval uncertainty}
	
	This type of uncertainty set allows us to model situations where the uncertainty across arcs is positively correlated which is irrelevant for the robust counterpart but impacts the optimistic counterpart es well as the max-regret function, because the optimistic optimizer present in both loses flexibility. In \cref{exmp:CorrelatedIntervallUncertainty} we will demonstrate the difference between interval and correlated interval uncertainty.
	
	For the uncertain travel times under uncertainty regime $s$ given by  $\cc_s(\u) \coloneqq \cc^0_s+\Cb_s\u$, let $\Cb_s$ be non-negative. Then we have  $\|\Cb_s\T\x\|_1  = \e\T\Cb_s\T\x$ so that the objective in the robust counterpart has a linear objective and the LP relaxation is still exact. Thus we can again use LP-duality in order to characterize $\RR^s_{opt}$ as follows:
	\begin{align*}
		\min_{\x,s}\  &\gc_s + \ga ({\cc}^0_s +{\Cb}_s\e)\T\x\ + \MM(\x,s)  \\
		\mathrm{s.t.:}\ &({\cc}^0_s +{\Cb}_s\e)\T\x \leq \d\T\y\, , \ \Eb\x =\d\, ,\  \x \in \R^{|\AA|}_+\, ,\ s \in \irg{1}{S} \, ,  \\
		&\Eb\T\y \leq  {\cc}^0_s +{\Cb}_s\e \,, \left( ({\cc}^0_s +{\Cb}_s\e)\T\x \leq \varphi^{*} + \eps\right)\, .
	\end{align*}
	Again, $\MM(\x,s)$ can be specified in three different ways according to the respective uncertainty preferences:
	\begin{align*}
		\mbox{Predictability: }&\MM(\x,s) = 2\e\T\Cb_s\T\x\, ,\\
		\mbox{Best-case Performance: }&\MM(\x,s) =  (\cc^0_s)\T\x - \e\T\Cb_s\T\x\, , \\
		\mbox{Regret: } &\MM(\x,s) = \b\T\w\, ,\ \Ab\T \w \geq \f(\x,s)\, ,
	\end{align*}
	where $\b,\Ab,$ and $\f$ are defined as described in Subsection~\ref{sec:LP-Relax}. We further comment on the usefulness of this bound in \cref{sec:SPPwithGeneralBoxUncertainty} and Section~\ref{sec:NumericalExperiments}. 	
	\begin{exmp}\label{exmp:CorrelatedIntervallUncertainty}
		For the best and worst-case calculation, and hence also for predictability, there is no difference between interval uncertainty and correlated interval uncertainty from a modeling standpoint. To see this, let $\Cb\in \R^{n\times m}_+$ and consider
				$$		\sup/\inf_{\|\u\|_{\infty}\leq 1}  \x\T\Cb\T\u  = \pm \x\T \Cb\T\e =
		\sup/\inf_{\|\w\|_{\infty}\leq 1} \x\T\mathrm{Diag}\left((\Cb\T\e)_1,\dots,(\Cb\T\e)_n\right)\w \, .$$
				However, there is a difference when it comes to calculating the maximum regret.
		To understand this difference, consider the graph depicted in \cref{fig:image3} and assume the traveling time on all the edges was known to be between 1 and 2. There are two paths from node 1 to node 4. Under interval uncertainty,  regardless of the path chosen, the maximum regret would be $4-2 = 2$, i.e. the worst case materializes on the path chosen, while the best case materializes for the alternative path. On the other hand, with correlated interval uncertainty we could model the situation with $\Cb = 0.5\, \e$ and $\UU = \lrbr{u \colon u\in [-1,1]}$ and $\cc_0 = 1.5\, \e$. In this case the same outcome would have to occur on all four edges simultaneously and the maximum regret would always be 0.
		\begin{figure}[ht]
			\begin{center}
				\begin{tikzpicture}
					[
					vertex/.style = {draw, inner sep = 2pt, circle},
					arc/.style = {very thick, ->},
					label/.style = {sloped, above, font=\small}
					]			
					\node (1) [vertex] at (0, 0) {$1$};
					\node (2) [vertex] at (2.5,0) {$2$};
					\node (3) [vertex] at (2.5,-1) {$3$};		 			
					\node (4) [vertex] at (5,0) {$4$};
					\draw (1) edge [arc] node [label] {} (2);
					\draw (1) edge [arc] node [label] {} (3);
					\draw (2) edge [arc] node [label] {} (4);
					\draw (3) edge [arc] node [label] {} (4);
				\end{tikzpicture}
			\end{center}
			\caption{}\label{fig:exmp2}	
			\label{fig:image3}
		\end{figure}
		
	\end{exmp}
	
	\subsection{SPP with general box uncertainty}\label{sec:SPPwithGeneralBoxUncertainty}
	If we have box uncertainty where $\Cb$ has negative entries or is not diagonal, 
	the robust counterpart of the uncertainty does no longer have an exact linearization, so that a characterization of $\RR^s _{opt}$ based on \cref{cor:CharacterizingRxB} is no longer possible, and hence \cref{thm:BinrayRef2} has to be invoked. With $\cc_s(\u)$ given as above, (\ref{eqn:UPMModel}) for the SPP with general box uncertainty can be cast as:
	
	\begin{align}
		\begin{split}\label{eqn:SPPGeneralBoxUncertainty}
			\inf_{\x,s} &\left[ \gc_s + \ga\left((\cc^0_s)\T\x+\|\Cb_s\T\x\|_1\right)  + \MM(\x,s)\right] \\
			\mathrm{s.t.:}\ &(\cc^0_s)\T\x+\|\Cb_s\T\x \|_1 \leq (\cc^0_s)\T\z_j+ \|\Cb_s\T\z_j\|_1\, ,\mbox{ all } j\in\irg{1}{S}\, ,\\
			\Eb\x &= \d \, , \left((\cc^0_s)\T\x+\|\Cb_s\T\x\|_1 \leq \varphi^*+\eps\right),\\
			\x &\in \lrbr{0,1}^{|\AA|},\ s \in \irg{1}{S}\, ,
		\end{split}
	\end{align}
	where $\z_j\in\RR_{opt}^j$ for all $j\in\irg{1}{S}$.
	The respective specifications for $\MM(\x,s)$ are
	\begin{align*}
		\mbox{Predictability: }&\MM(\x,s) = 2\|\Cb_s\T\x\|_1\, ,\\
		\mbox{Best-case performance: }&\MM(\x,s) = \inf_{\u\in \UU}\lrbr{(\cc^0_s)\T\x+\u\T\Cb_s\T\x}\, ,\\
		\mbox{Regret: } &\MM(\x,s) = \b\T\w\, ,\ \Ab\T\w \geq \f(\x,s)\, .
	\end{align*}
	Since $\x$ and $s$ are discrete, we can linearize the multilinear term  in the infimum using iteratively McCormick envelopes. For the max-regret calculation we have to work again with the approximations described in Subsection~\ref{sec:Regret}. However, when we observe the max-regret function for uncertain SPP given by
	\begin{align*}
		\rho(\x,s) = \sup_{\u\in \UU}\lrbr{\cc_s(\u)\T\x -\inf_{\y\in\lrbr{0,1}^{|\AA|} } \lrbr{\cc_s(\u)\T\y \colon  \Eb\y = \d}}
	\end{align*}
	we see that we have a mixed-binary problem where the bilinear terms can be exactly linearized using McCormick envelopes.  This allows us at least to easily test the quality of our proposed lower upper bound. The same is true for the regret in the correlated interval uncertainty model. We will report a respective evaluation in Section~\ref{sec:NumericalExperiments}.
	
	As stated above, the constraints enforcing $\x\in\RR^s_{opt}$ are derived using \cref{thm:BinrayRef2}. The number of these constraints may be exponential (as discussed above) and each requires solving an optimization problem in order to determine the right-hand side. We circumvene these problems in our implementation by applying these constraints as lazy constraints. Thus, in our implementation described in the next section, the constraints are absent at first, and whenever the branch-and-bound solver encounters a feasible solution $(\x,s)$, it checks whether
	\begin{align*}
		(\cc^0_s)\T\x+\|\Cb_s\T\x \|_1 \leq \inf_{\y\in\lrbr{0,1}^{|\AA|}} \lrbr{\sup_{\u\in\UU}\lrbr{\cc_s(\u)\T\y }\colon  \Eb\y = \d}.
	\end{align*}
	If this is not the case, we add the constraint to the model. This process proved to be quite efficient since the robust optimization problem on the right-hand side is a MILP that is quite easily tackled by modern solvers.

	\section{Numerical experiments}\label{sec:NumericalExperiments}
	In this section we will conduct some numerical experiments, where we will apply the concepts introduced in this text to the shortest path problem. We will demonstrate that we can use general MILP solvers in order to solve (\ref{eqn:UPMModel}) in this setting for interval uncertainty and general box uncertainty. Also we will investigate the trade-off between the different uncertainty preferences and worst-case performance. All instances were solved using {\tt Gurobi}~\cite{gurobi}, and the implementation was done in {\tt Python}~\cite{python} and {\tt Matlab}. The experiments were run using an AMD Ryzen 7 1700X Eight-Core 3.4 GHz CPU, and 16 GB RAM.
	
		\subsection{The $\eps $-constraint model for the SPP}
		
		In this set of experiments we exemplify the use of the $\eps $-constraint model when applied to the uncertain shortest path model under interval uncertainty. To this end, we created 10 instances of these problems by generating networks of size $|\VV| = 150$, where nodes $i$ and $j$ where connected with 50\% probability if the indexes of the nodes where no further apart than 10 and with probability equal to zero otherwise. For each instances we created $S = 10$ uncertainty sets by generating uncertainty arc-weights where the lower bound were drawn at random uniformly from the interval $ [8,10]$ and upper bounds were drawn from the interval $[10,12]$. For each instance we solved the endogenously robust counterpart in order to obtain
		$\varphi^*$. Replacing $\MM(\x,s)$ with the respective terms for predictability, best-case performance and regret one at a time, we resolved each instance allowing for an $\eps $ violation of the optimality constraint where $\eps \in \lrbr{1\%,2\%,\dots, 10\%}$. The blue line in the graphs
 of Figure~3 depicts the change of the optimal solutions predictability, best-case performance and max regret with $\eps $, while the orange line depicts the change of the solutions worst case performance. The values were normalized by their range across different values of $\eps $ and averaged across all 10 instances.
		
\begin{figure}[h!]\label{fig:epsilonmodel}
			\centering
			\includegraphics[scale= 0.38]{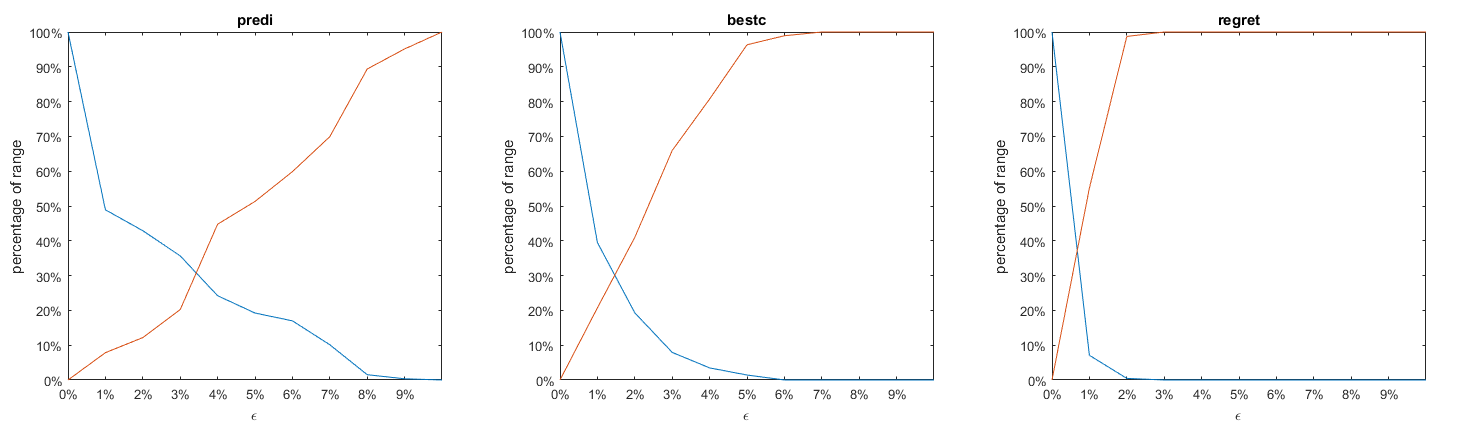}	
						\caption{Trade-off between the secondary characteristics and robustly optimal value.}	
\end{figure}
				
		We see that in our experiments on average, allowing for a $1\%$ deviation from the optimal value earned a $50\%$ gain in predictability relative to the performance of the robustly optimal solution. A similar outcome is observed when optimizing best-case performance, albeit at a larger sacrifice of worst case performance relative to the range. With regard to regret, the trade-off was even more pronounced. We also report the run time for these experiments in~\cref{tbl:epsilonRunningTime}.
		\begin{table}[h!]
			\centering	
			\begin{tabular}{c| c c  }
				preferences& $T$  & [$T_{min}$, $T_{max}$]  \%  \\
				\hline
				\hline
				predi  & 53.69 &[12.43, 200.12]  \\
				
				bestc  & 42.83  &[10.05, 174.17]   \\
				
				regret & 592.29  &[1388.64, 328.43]  \\
			\end{tabular}
			\caption{Run time for the $\eps$-constraints model}\label{tbl:epsilonRunningTime}
		\end{table}

		Of course the results above cannot be generalized and depending on the instances at hand one could face much more favourable trade-offs. However, these instances at least demonstrate that such favourable trade-offs where small deviations from robust optimality allow for substantial gains in the secondary objective are possible.	 This also demonstrates how our theory can be applied in practice, as it gives decision makers an idea of how much of protection against worst case performance they have to sacrifice, in order to make a meaningful difference with respect to predictability, best-case performance, or regret.
	
	\subsection{The bilevel model for the SPP}
	For these experiments we considered the uncertain SPP under two types of endogenous box uncertainty: interval uncertainty and general box uncertainty. For both types we want to illustrate the trade-off between different contributors of the objective function of (\ref{eqn:UPMModel}), namely worst-case performance, predictability, maximum regret and best-case performance. We compared the trade-off between two of these at a time so that in total 6 comparisons were made for both types of uncertainty models. The attributes not considered were dropped from the objective function, while the two that were considered had randomly chosen multipliers from the unit box. We probed 14 random coefficients per instance and generated 20 instances per pair of attributes so that in total we solved 280 problems per pair. The size of the instances was $|\VV|=300,\ S = 10$ for the interval uncertainty model and $|\VV|=50,\ S = 10,\ q = 10$ for  the general uncertainty model. Arcs between nodes were established randomly, where an arc between nodes $i$ and $j$ was created if a uniformly distributed random number from the interval $[0,1]$ was smaller than $0.3/|i-j|$. For the general box uncertainty we used the upper bound from Subsection~\ref{sec:Regret} for the computation of regret. However, for each solution $(\x,s)$ we obtained, we also calculated $\rho(\x,s)$ exactly and compared that value to the value from the upper bound. In 1678 of 1680 cases the bound was in fact tight, indicating that it worked reasonably well for problems of the size considered here. In our experience the quality of the bound diminishes as $|\VV|$ and $|\AA|$ grow but for medium size problems it is quite acceptable.
	
	We summarize the results in \cref{tbl:GeneralUncertainty} and \cref{tbl:IntervalUncertainty}. The tables report the average computation time in seconds ($T$) as well as the range of computation times ([$T_{min}$, $T_{max}$]). The column ($\#TO$) gives a  measure of how intensive the trade-off between different attributes has been in our experiments. It reports the number of instances for which the value of the compared attributes changed with different objective function coefficients. The columns   ($\Delta_1$\%) and ($\Delta_2$\%) give the percentage difference between the maximum and the minimum value of the first and second attribute within an instance, averaged over all 20 instances.
	
	Further, \cref{fig:GeneralUncertaintyScatter} and \cref{fig:IntervallUncertaintyScatter} illustrate what can be thought of as combined, normalized efficiency frontiers. For every problem solved, we normalized the outcome of the attributes on a scale between 0 and 1 using the formula $a_{\%} = (a-a_{min})/(a_{max}-a_{min})$, where $a_{min}$ and $a_{max}$ are the minimum and the maximum the attribute attained in that instance over all 14 probings, and $a$ is the unnormalized value of that attribute in the current problem. After normalizing one can plot the 14 pairs of outcomes for the two attributes being compared in a scatter plot. What is depicted in the figures is the combination of all of these plots per pair. We only included those instances where there was a trade-off between the two attributes.
	
	We summarize the key takeaways from our experiments as follows: Due to the proposed formulations, even the most demanding model with general box uncertainty is manageable using standard software. Solutions were mostly obtained in a matter of minutes. Trade-offs between two attributes were encountered regularly, indicating that different uncertainty preferences have a notable impact on the decision. In instances where there was a trade-off, outcomes of the attributes could vary substantially across the 14 probings.
	
	Our experiments also demonstrate how the bilevel model can be used in practice. The top level decision maker (who controls the uncertainty set) could base their decision on efficiency frontiers displaying the trade-off between different attributes across the different robustly optimal sets and implement the uncertainty regime that optimizes that trade-off.

	\begin{figure}[h!]
		\centering
		\includegraphics[scale= 0.40]{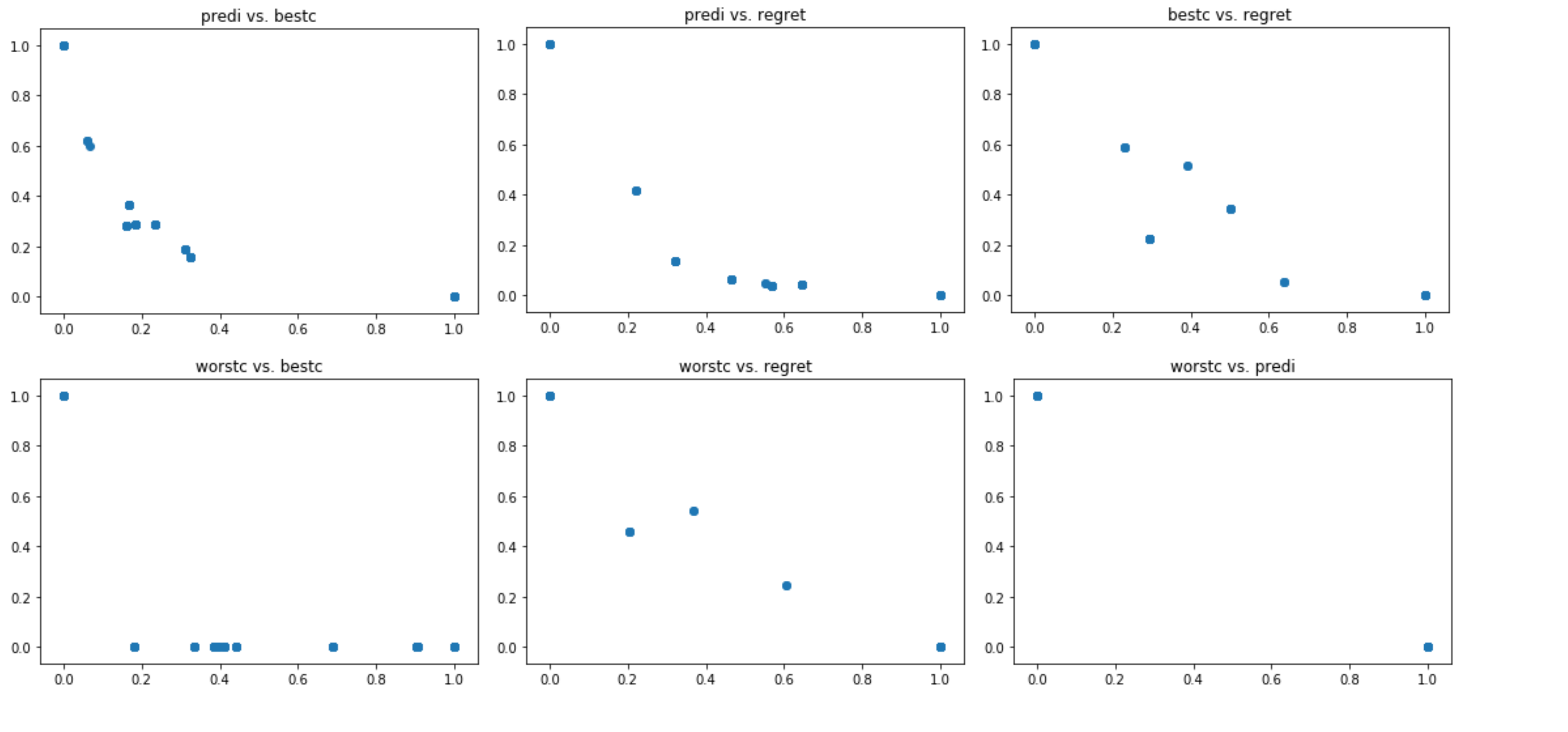}	
		\caption{Scatter plots for SPP instances with general box uncertainty}
		\label{fig:GeneralUncertaintyScatter}	
	\end{figure}
	\begin{table}[h!]
		\centering	
		\begin{tabular}{c| c c c c c c }
			preferences& $T$  & [$T_{min}$, $T_{max}$]  & $\#TO$  & $\Delta_1$\%    & $\Delta_2$ \%  \\
			\hline
			\hline
			predi vs. bestc   & 131.63 &[26.16, 418.12] &20  &61.81  &16.81 \\
			
			predi vs. regret  & 18.48  &[4.51, 72.24]   &9   &16.15  &30.38 \\
			
			bestc vs. regret  & 115.77 &[9.75, 328.43]  &9   &3.21   &20.35 \\
			
			worstc vs. bestc  &91.82   &[3.57, 318.92]  &20  &14.38  &13.03 \\
			
			worstc vs. regret &13.68   &[5.63, 43.56]   &8   &1.64   &18.04 \\
			
			worstc vs predi  &5.04    &[1.66, 16.14]   &5   &2.37   &9.39   \\  		
		\end{tabular}
		\caption{Results for instances with general box uncertainty}\label{tbl:GeneralUncertainty}
	\end{table}

	\begin{figure}[h!]
		\centering
		\includegraphics[scale= 0.40]{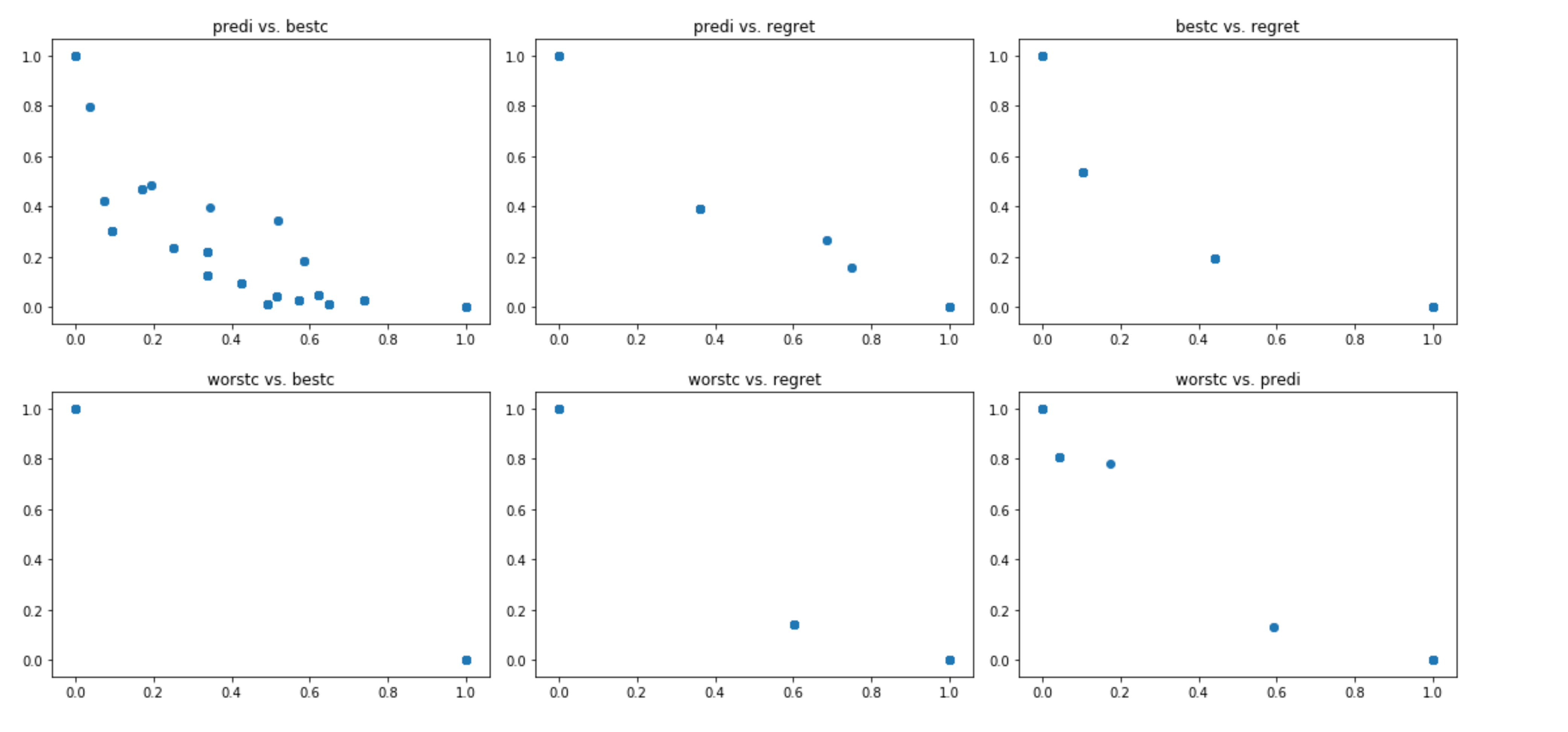}	
		\caption{Scatter plots for SPP instances with interval uncertainty}
		\label{fig:IntervallUncertaintyScatter}	
	\end{figure}	
	\begin{table}[h!]
		\centering
		\begin{tabular}{c| c c c c c }
			preferences& $T$  & [$T_{min}$, $T_{max}$]  & $\#TO$  & $\Delta_1$\%    & $\Delta_2$ \%   \\
			\hline
			\hline
			predi vs. bestc   &14.89   &[7.76, 28.7]    &19  &44.87  &42.95  \\
			
			predi vs. regret  &24.8    &[7.48, 85.56]   &15  &21.9   &23.91    \\
			
			bestc vs. regret  &22.35   &[9.24, 75.66]   &9   &7.25   &19.73 \\
			
			worstc vs. bestc  &14.0    &[7.56, 47.97]   &9   &4.12   &11.29  \\
			
			worstc vs. regret &20.88   &[8.44, 36.87]   &3   &2.46   &6.86     \\
			
			worstc vs predi   &15.44   &[6.8, 55.16]    &12  &7.12   &15.78  \\
		\end{tabular}
		\caption{Results for SPP instances with interval uncertainty}\label{tbl:IntervalUncertainty}
	\end{table}
	\newpage
		
	\section{Conclusion}
		We have introduced a two models, based on multi-objective optimization and bilevel optimization, for optimization under endogenous uncertainty that capture uncertainty preferences of a decision maker, where the aim is to choose an uncertainty regime such that the robustly optimal solutions under said regime exhibits preferred qualities with respect to predictability, maximum regret and best-case performance. Along with introducing the -- apparently novel -- models, we also discussed, for relevant special cases, various reformulations that can be tackled using C-MILP solvers. We applied this framework to the shortest path problem and (in the appendix) to the knapsack problem under endogenous uncertainty, providing multiple practical reformulations applicable under different uncertainty models. We demonstrated that the respective optimization problems are tractable, using standard software, and give meaningful results. However, we understand that there are many open questions regarding this new model, some of which we discuss briefly in the appendix.
		
\noindent
{\bf Acknowledgements.}
The authors want to thank two anonymous referees and an anonymous Associate Editor for their advice and suggestions to improve upon earlier versions.
	
\setlength{\bibsep}{-2.6pt plus 0.0ex}
		\bibliography{Literature}
	\bibliographystyle{abbrv}
	
		\appendix
	
	\newpage
	
	\hspace{-1cm}\Huge{Appendix} \\
	\normalsize
	
	\noindent

	\section{Generalization from SPP 
		to other network flow problems}\label{apx:Generalization of our discussion on the shortest path problem to other network flow problems}
	
	Network flow problems problems take the form of $\inf_{\x}\lrbr{\cc\T\x\colon \Eb\x\leq \b,\ \x\in\lrbr{0,1}^{|\AA|}}$ where the matrix $\Eb\in\R^{q\times|\AA|}$ is parametrized by the underlying graph, $\b\in\R^{q}$ is a vector of maximal or minimal net flows at each node and $\cc$ is a vector of costs associated with flows on each arc. Note, that in the aforementioned general form we have assumed that possible "$\geq$"-constraints have been transformed by multiplying both sides by $-1$. Network flow problems are, thus, combinatorial problems which in many special cases allow for an exact LP relaxation. Specifically, if $\Eb$ is totally unimodular and $\b$ is a vector of integers,  than the LP relaxation is actually tight. We will now discuss the assumptions under which the above techniques can be applied to the more general case.
	
	If we assume interval uncertainty of the objective function coefficients, and furthermore that the LP relaxation of the nominal problem is tight, then the entire discussion in \cref{sec:SPIntervallUncertainty} applies almost verbatim. Specifically, this is true for the max-regret reformulation as we will show shortly in \cref{thm:MaxRegretRef}. Also, if we assume that $\b$ is uncertain, no adoptions are needed for the modelling of predictability and best-case performance in the objective, since the latter is independent of $\b$. However, in order to keep \cref{cor:CharacterizingRxB} applicable we need to assume that $\b$ is also subject to interval uncertainty: $b_i \in [\uline{b}_i,\bar{b}_i]$,
	$ i \in \irg{1}{q}$, with integer $\uline{b}_i,\bar{b}_i$. The robust counterpart is then  $\inf_{\x}\lrbr{\uline{\cc}\T\x\colon \Eb\x\leq \bar{\b},\ \x\in[0,1]^{|\AA|}}$, where $\bar{\b}$ is the (integer) vector of worst case realization of the uncertain right hand side. By totally unimodularity we can cast the latter problem as an LP so that we may use \cref{cor:CharacterizingRxB} in order characterize $\RR_{opt}^s$. The same assumption is needed in order to maintain  validity of the reformulation of the regret function in \cref{sec:SPIntervallUncertainty} (up to minor adaptations) which we will demonstrate in \cref{thm:MaxRegretRef} at the end of this section.
	
	For correlated box uncertainty with fixed right hand side. the discussion in \cref{sec:SPP with correlated interval uncertainty}  applies analogously, however the relaxation used to model the max-regret function needs to be adapted using the more general techniques discussed at the beginning of \cref{sec:LP-Relax}. For all other cases, e.g. general box uncertainty and correlated box uncertainty with uncertain right hand side, one can apply the techniques from \cref{sec:SPPwithGeneralBoxUncertainty}, again with the need to adapt the  max-regret relaxation.
	
	We close the section providing a
	slight
	generalization of the reformulation of the max-regret function from \cite{karacsan2001robust}, which covers more general network flow problems and cases with right-hand side uncertainty.
	\begin{thm}\label{thm:MaxRegretRef}
		Assume that for the uncertain linear-binary problem
		\begin{align*}
			\inf_{\x}\lrbr{\cc(\u)\T\x\colon \Eb\x\leq \b(\v),\ \x\in\lrbr{0,1}^{|\AA|}} \mbox{ where } \|\u\|_{\infty}\leq 1,\ \|\v\|_{\infty}\leq 1
		\end{align*}
		we have interval uncertainty for both the objective and the right hand sides, i.e. $(\cc(\u))_i = c_i(u_i) \in [\underline{c}_i,\bar{c}_i]$ and $(\b(\v))_i = b_i(v_i) \in [\underline{b}_i,\bar{b}_i]$ where $\underline{b}_i,\bar{b}_i$ are integer, $i \in \irg{1}{q}$, and that $\Eb$ is totally unimodular. Also, let $\cc_{\x}$ be defined as in \cref{sec:SPIntervallUncertainty}. Then for any $\x\in\lrbr{0,1}^{|\AA|}$ the max-regret function is given by
		\begin{align*}
			\rho(\x) = \min_{\z\in\R^{q+|\AA|}}\lrbr{\bar{\cc}\T\x- \left(\uline{\b}\T,\e\T\right)\z\colon \left(\Eb,\Ib\right)\T\z\leq\cc_{\x}}.
		\end{align*}
	\end{thm}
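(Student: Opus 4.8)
The plan is to generalize the interval-uncertainty argument of \cref{sec:SPIntervallUncertainty} (which goes back to \cite{karacsan2001robust}) so that it also accommodates right-hand side uncertainty and an arbitrary totally unimodular $\Eb$. Writing out the max-regret function for the problem at hand gives
\[
\rho(\x)=\sup_{\|\u\|_\infty\le1,\ \|\v\|_\infty\le1}\lrbr{\cc(\u)\T\x-\inf_{\y\in\lrbr{0,1}^{|\AA|}}\lrbr{\cc(\u)\T\y\colon \Eb\y\le\b(\v)}}\,.
\]
First I would turn the negated inner infimum into a supremum and merge the three optimizations into a single maximization over the objective realization $\u$, the right-hand side realization $\v$, and the hindsight solution $\y$, so that $\rho(\x)=\sup_{\u,\v,\y}\cc(\u)\T(\x-\y)$ with $\y$ ranging over the admissible comparison solutions.

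The first substantive step resolves the objective uncertainty. Since the uncertainty is coordinatewise (interval) and both $\x$ and $\y$ are binary, for fixed $\y$ the maximization over $\u$ separates across arcs, and a short four-case inspection of $(x_i,y_i)\in\lrbr{0,1}^2$ shows that $\max_{\|\u\|_\infty\le1}\cc(\u)\T(\x-\y)=\bar{\cc}\T\x-\cc_{\x}\T\y$, with $\cc_{\x}$ exactly the vector from \cref{sec:SPIntervallUncertainty}: the worst-case cost on arcs used by $\x$ and the best-case cost elsewhere. This is the core of the \cite{karacsan2001robust} reduction and leaves $\rho(\x)=\bar{\cc}\T\x-\inf_{\y}\lrbr{\cc_{\x}\T\y\colon \y\in\lrbr{0,1}^{|\AA|},\ \Eb\y\le\b}$, where $\b$ must still be fixed at the extreme realization relevant for the admissible comparison solutions; under interval right-hand side uncertainty this is the worst-case (robustly feasible) bound $\underline{\b}$, which is integral by hypothesis.

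Next I would use total unimodularity of $\Eb$ together with integrality of $\underline{\b}$ and of the box bounds $\0\le\y\le\e$ to replace the binary hindsight problem by its linear programming relaxation with no change in optimal value, and then dualize. The flow constraints contribute the $\Eb\T$-block with data $\underline{\b}$, the upper bounds $\y\le\e$ contribute the identity block $\Ib$ with data $\e$, and strong LP duality turns the inner infimum into $\sup_{\z}\lrbr{(\underline{\b}\T,\e\T)\z\colon (\Eb,\Ib)\T\z\le\cc_{\x}}$. Substituting this into $\rho(\x)=\bar{\cc}\T\x-\inf_{\y}(\cdots)$ and rewriting the supremum as a minimization of the negated objective yields precisely the stated formula.

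I expect the main obstacle to be the correct identification of the extreme right-hand side realization and the associated dual bookkeeping: one must argue cleanly which interval endpoint of $\b(\v)$ governs the admissible comparison solutions (so that $\underline{\b}$, rather than $\bar{\b}$, enters), verify that appending the identity block keeps the constraint matrix totally unimodular and the right-hand side integral so that the relaxation is exact, and track the signs and free-versus-sign-constrained status of the multipliers $\z$ consistently throughout. The objective-uncertainty reduction and the LP duality step are otherwise routine once these conventions are pinned down.
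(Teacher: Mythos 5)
Your proof follows the paper's own argument step for step: the paper likewise merges everything into a single maximization over $(\u,\v,\y)$, resolves the objective uncertainty coordinatewise over the index sets $\II_j=\lrbr{i\colon x_i=j}$ to obtain $\max_{\|\u\|_\infty\le 1}\cc(\u)\T(\x-\y)=\bar{\cc}\T\x-\cc_{\x}\T\y$, fixes the right-hand side at an extreme realization, drops integrality via total unimodularity of the stacked matrix and integrality of the data, and concludes by LP duality. On all of those steps your proposal is sound.

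However, at precisely the step you yourself flagged as the main obstacle, your justification is wrong, and it is a genuine conceptual error rather than a bookkeeping slip. You assert that the hindsight comparison solutions are governed by ``the worst-case (robustly feasible) bound,'' i.e.\ the \emph{tightest} realization of $\b(\v)$. But in the max-regret function as defined in this paper, the inner minimizer $\y$ only needs to be feasible for the \emph{realized} scenario, and the outer supremum over $\v$ is taken by the adversary: since the hindsight minimum is subtracted, the regret-maximizing choice makes that minimum as small as possible, i.e.\ makes the feasible set $\lrbr{\y\colon\Eb\y\le\b(\v)}$ as \emph{large} as possible. The paper's proof states exactly this (``it is optimal to make the constraints as loose as possible''). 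Restricting the comparison to robustly feasible $\y$, as you propose, shrinks the feasible set of the inner problem, so your identity would compute only a lower bound on $\rho(\x)$ and in general a strictly different quantity; the resulting ``regret'' would moreover not match the definition used throughout Subsection~\ref{sec:Regret}. Your final formula coincides with the paper's only textually: the appendix uses the convention that the underlined symbol denotes the best-case (loosest) realization of the right-hand side and the barred symbol the worst case (see the robust counterpart written there with $\bar{\b}$), which sits uneasily with the interval notation $b_i(v_i)\in[\underline{b}_i,\bar{b}_i]$ in the theorem statement and is plausibly the source of your confusion. Whatever symbol one attaches to it, the vector entering the dualized hindsight problem must be the componentwise \emph{loosest} realization of $\b(\v)$, not the robustly feasible one; with that correction (and with the sign constraints on the dual vector $\z$ tracked as you indicate), your argument becomes the paper's proof.
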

	\begin{proof}[Proof of \cref{thm:MaxRegretRef}]
		\begin{align*}
			\rho(\x) &= \max_{\u,\y,\v}\lrbr{\cc(\u)\T\x - \cc(\u)\T\y \colon \Eb\y\leq \b(\v),\ \y\in\lrbr{0,1}^{|\AA|},\ \|\u\|_{\infty}\leq 1,\ \|\v\|_{\infty}\leq 1}\\
			&= \bar{\cc}\T\x - \min_{\y,\v}\lrbr{\cc_{\x}\T\y \colon \Eb\y\leq\b(\v),\ \y\in\lrbr{0,1}^{|\AA|},\ \|\v\|_{\infty}\leq 1}\\
			&=  \bar{\cc}\T\x - \min_{\y,\v}\lrbr{\cc_{\x}\T\y \colon \Eb\y\leq\underline{\b},\ \y\in\lrbr{0,1}^{|\AA|}}\\
			&=  \bar{\cc}\T\x - \min_{\y,\v}\lrbr{\cc_{\x}\T\y \colon \Eb\y\leq\underline{\b},\ \y\in[0,1]^{|\AA|}}\\
			&=  \bar{\cc}\T\x - \max_{\z\R^{q+|\AA|}}\lrbr{\left(\uline{\b}\T,\e\T\right)\z\colon \left(\Eb,\Ib\right)\T\z\leq\cc_{\x}}
		\end{align*}
		The second equality stems from the fact that for any $\x,\y\in\lrbr{0,1}^{|\AA|}$ we have
		\begin{align*}
			\max_{\|\u\|_{\infty}\leq 1}\lrbr{\cc(\u)\T (\x-\y)} &= \max_{\|\u\|_{\infty}\leq 1}\lrbr{\sum_{i=1}^{|\AA|}c_i(u_i)(x_i-y_i)} \\
			&= \sum_{i\in\II_1} \max_{-1\leq u_i\leq 1}\lrbr{c_i(u_i)(x_i-y_i)} + \sum_{i\in\II_0} \max_{-1\leq u_i\leq 1}\lrbr{c_i(u_i)(x_i-y_i)}\\
			&= \sum_{i\in\II_1} \bar{c}_i(x_i-y_i) +\sum_{i\in\II_0} \underline{c}_i(x_i-y_i)\\
			&= \sum_{i\in\II_1} \bar{c}_i(x_i-y_i) -\sum_{i\in\II_0} \underline{c}_iy_i = \sum_{i=1}^{|\AA|}\bar{c}_ix_i +  \sum_{i\in\II_1} \bar{c}_iy_i -\sum_{i\in\II_0} \underline{c}_iy_i\\
			&= \bar{\cc}\T\x + \cc_{\x}\T\y,
		\end{align*}
		where $\II_j = \lrbr{i\colon x_i = j},\ j = 0,1$. The third equality is justified since it is optimal to make the constraints as loose as possible, the forth stems from the fact that $\Eb$ is totally unimodular and $\underline{\b}$ is integer and, finally, the fifth equality holds by linear programming duality.
	\end{proof}

	\section{Knapsack problems with endogenous $\Gamma$-uncertainty}\label{apx:The knapsack problem with endogenous Gamma-uncertainty}
	In this section we will apply our framework to the knapsack problem with $\Gamma$-uncertainty which is a classical example from \cite{bertsimas2004price}. In this setting, we want to maximize the utility of packed items under the constraint that their joint weight must not surpass a certain threshold.
	Let $\cc\in\R^n$ be the vector of utilities of the $n$ items, and let $\aa\in\R^n$ their weight, then the knapsack problem is given by $\max_{\x\in\lrbr{0,1}^{n}} \lrbr{\cc\T\x \colon  \aa\T\x \leq b}$.
	As in \cite{bertsimas2004price} we assume that the weights of the individual items are uncertain, where the number of items whose wight deviates from the nominal value $\aa_0\in\R^n$ is restricted by the parameter $\Gamma\in\N$. Denoting the maximum amount of deviation of item $i$ by $\bar{a}_i, \ i \in\irg{1}{S}$ our assumptions so far may be formalized as $(\aa(\u))_i\coloneqq a^0_i + \bar{a}_iu_i$ where $\u\in \UU_{\Gamma}\coloneqq \lrbr{\u\colon \sum_{i=1}^{n}u_i  = \Gamma,\ \u\in[0,1]^n}$. However, we make the additional assumption that we can influence the vector of maximum deviations by makeing a choice on the uncertainty regime, that is we can chose between $\bar{\aa}_s\in\R^n,\ i \in\irg{1}{S}$. Under these assumptions, the knapsack problem under endogenous $\Gamma$-uncertainty is given by
	\begin{align*}
		\max_{\x\in\lrbr{0,1}^{n}} \lrbr{\cc\T\x \colon \sum_{i=1}^{n}(a^0_i+\bar{a}^s_iu_i) x_i \leq b} \quad \mbox{where } \u\in\UU_{\Gamma}, \ s\in\irg{1}{S},
	\end{align*}
	while the endogenously robust counterpart is given by
	\begin{align*}
		\max_{\x\in\lrbr{0,1}^{n},\ s\in\irg{1}{S}} \lrbr{\cc\T\x \colon  \aa\T\x + \max_{\u}\lrbr{ \sum_{i=1}^{n} u_i\bar{a}^s_ix_i\colon \sum_{i=1}^{n}u_i  = \Gamma,\ \u\in[0,1]^n } \leq b}.
	\end{align*}
	Following the standard reformulation strategy from \cite{bertsimas2004price} we arrive at the deterministic counterpart given by
	\begin{align*}
		\max_{\x,\gl,\ggd,s} \ \cc\T\x  \hspace{2cm}& \\
		\mathrm{s.t.:}\ \aa\T\x + \gl\Gamma +\ggd\T\e&\leq b,\ \x\in\lrbr{0,1}^{n},\ s\in\irg{1}{S},\ \ggd\in\R^n_+,\ \gl \in \R\,, \\
		\gd_i +\gl &\geq \bar{a}^s_ix_i ,  \hspace{1cm} i \in \irg{1}{n}\,.
	\end{align*}
	Again we pushed the dependence on $s$ into the coefficients $\bar{a}_i^s$ and the bilinear terms arising from $\bar{a}^s_ix_i = \sum_{j=1}^{S}\bar{a}^j_ir^s_jx_i$ can be linearized exactly via McCormick envelopes where the arising big-M constants are equal to one.
	
	When applying the bilevel optimization framework from \cref{sec:A bilevel optimization approach} we have to resort to \cref{thm:BinrayRef1} for the characterization of $\RR_{opt}^s$ since we have a combinatorial problem with endogenously uncertain constraints. This raises the question of how to construct $M_j, \ j \in \irg{1}{S}$, for the respective constraint from (\ref{eqn:BinrayRef}). In case of the endogenously uncertain knapsack problem this constraint is given by $\cc\T\x\geq\varphi_j+(1-r^s_j)M_j, \mbox{ all } j\in\irg{1}{S}$, where we had to flip the inequality since we are now dealing with a maximization problem and we need $M_j, \ j \in \irg{1}{S}$ to be a upper bound on $\varphi_{\min}-\varphi_j,\ j \in \irg{1}{S}$ with $\varphi_{\min}\coloneqq \min_{j\in\irg{1}{S}}\varphi_j$. Although we discussed a general scheme for constructing these constants in \cref{sec:A bilevel optimization approach}, we can use special structure to obtain cheaper bounds on $\varphi_{\min}-\varphi_j,\ j \in \irg{1}{S}$. Note that $M_j$ must be such that the constraint becomes redundant whenever $r^s_j=0$. This is achieved by choosing $M_j = \varphi_j$ since zero is a trivial lower bound for the objective of the knapsack problem. It is therefore not necessary to pre-calculate $\varphi_{\min}$, but one also has to take into account that the weaker bound will result in weaker relaxations at each node of a branch and bound procedure which may adversely affect the run times of these algorithms.
	
	We will now proceed to discuss the various specifications of $\MM(\x,s)$ for the different types of uncertainty preferences and comment on some of the special issues to be considered when working with these models.
	
	\subsection{Regret}
	As discussed in \cref{sec:LP-Relax} we can always find exact linearizations of the max-regret function at the cost of introducing an exponential number of variables. For the sake of simplicity we will limit our discussion to a compact linearization based on relaxing the McCormick reformulation and dualization of the latter. The detailed steps are outlined in the following theorem:
	
	\begin{thm}\label{thm:MaxRegretforKnapsack}
		Assume that there is a value for the uncertain parameter such that the knapsack problem under endogenous $\Gamma$-uncertainty is feasible. Then, the max-regret function can be upper bounded in the following way
		\begin{align}\label{eqn:KnapssackRegretReformulation}
			\rho(\x,s) \leq  -\cc\T\x + \min_{\mu,\gga,\ggb,\ggc,\gl}\lrbr{\mu b+ \gl\Gamma+ \e\T(\ggc+\t+\s)\colon
				\begin{array}{rl}
					\aa\mu - \ggb +\ggc+\t \geq& \hspace{-0.2cm}\cc,\\
					\bar{\aa}^s\mu + \gga +\ggb -\ggc \geq&\hspace{-0.2cm} \oo,\\
					\e\gl - \gga + \ggc + \s \geq&\hspace{-0.2cm} \oo,\\
					\ \gga,\ggb,\ggc,\t,\s\in&\hspace{-0.2cm} \R^n_+, \ \mu\geq 0
				\end{array}
			}	
		\end{align}
	\end{thm}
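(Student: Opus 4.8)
The plan is to begin from the max-regret function of the paper specialized to this \emph{maximization} problem. Since the utilities $\cc$ are certain and only the weights carry uncertainty, the hindsight-optimal value for a realization $\u$ is the knapsack optimum $\max_{\y\in\lrbr{0,1}^n}\lrbr{\cc\T\y\colon\sum_i(a^0_i+\bar a^s_iu_i)y_i\leq b}$, so that
\begin{align*}
\rho(\x,s)= -\cc\T\x + \sup_{\u\in\UU_\Gamma}\ \max_{\y\in\lrbr{0,1}^n}\lrbr{\cc\T\y\colon \sum_{i=1}^n\left(a^0_i+\bar a^s_iu_i\right)y_i\leq b}\,.
\end{align*}
I would then merge the two outer optimizations into a single joint maximization over $(\u,\y)\in\UU_\Gamma\times\lrbr{0,1}^n$; its only nonlinearity is the bilinear product $u_iy_i$ appearing in the weight constraint.

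First I would linearize these products exactly as in \cref{sec:LP-Relax}. Introducing a lifted variable $\psi_i$ for each product $u_iy_i$, relaxing the integrality $\y\in\lrbr{0,1}^n$ to $\y\in[0,1]^n$, and bounding $\psi_i$ by the McCormick envelope over the unit box, i.e.\ $\psi_i\leq u_i$, $\psi_i\leq y_i$, $\psi_i\geq u_i+y_i-1$ and $\psi_i\geq 0$, replaces the weight constraint by the linear $\sum_i(a^0_iy_i+\bar a^s_i\psi_i)\leq b$ and produces a linear program in $(\u,\y,\psi)$. Because both the continuous relaxation of $\y$ and the McCormick envelope enlarge the feasible region of a maximization, this LP over-estimates the inner value, giving
\begin{align*}
\rho(\x,s)\leq -\cc\T\x + \max_{\u,\y,\psi}\lrbr{\cc\T\y\colon \textstyle\sum_{i=1}^n\left(a^0_iy_i+\bar a^s_i\psi_i\right)\leq b,\ \e\T\u=\Gamma,\ \u,\y\in[0,1]^n}\,,
\end{align*}
where the maximization is taken subject also to the four envelope inequalities listed above.

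Next I would dualize this linear program. Attaching $\mu\geq 0$ to the knapsack constraint, a free multiplier $\gl$ to $\e\T\u=\Gamma$, nonnegative multipliers $\gga,\ggb,\ggc$ to the three coupling inequalities $\psi_i\leq u_i$, $\psi_i\leq y_i$, $\psi_i\geq u_i+y_i-1$, and $\s,\t\geq\oo$ to the box bounds $\u\leq\e$ and $\y\leq\e$, the dual objective collects the right-hand sides as $\mu b+\gl\Gamma+\e\T(\ggc+\t+\s)$. Reading off the dual constraint for each primal variable block then yields $\aa^0\mu-\ggb+\ggc+\t\geq\cc$ from the $\y$-variables, $\bar{\aa}^s\mu+\gga+\ggb-\ggc\geq\oo$ from the $\psi$-variables, and $\e\gl-\gga+\ggc+\s\geq\oo$ from the $\u$-variables, which is precisely the constraint system of \eqref{eqn:KnapssackRegretReformulation} (with $\aa$ there read as the nominal weight vector $\aa^0$). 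By weak LP duality every dual-feasible objective dominates the primal optimum, so the inner maximum is at most the stated minimum, and the asserted bound follows.

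The genuinely routine part is the term-by-term matching of dual constraints to the three stated blocks. The point demanding care is the direction of the estimates: unlike the totally unimodular shortest-path case, the McCormick lifting and the relaxation of $\y$ are \emph{not} exact here, so the chain must be read as a sequence of over-estimates of a maximization capped by weak duality. The standing feasibility assumption on the uncertain knapsack is exactly what guarantees that the primal relaxation is feasible and its value finite, so that the resulting dual bound is meaningful rather than vacuous.
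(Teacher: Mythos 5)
Your proof is correct and follows essentially the same route as the paper's: rewrite the max-regret as a joint maximization over $(\u,\y)$, McCormick-linearize the bilinear products, relax the binaries, and dualize the resulting LP, with the feasibility assumption securing that the dual bound is non-vacuous. The only (immaterial) difference is that the paper first performs the McCormick lift exactly while $\y$ is still binary and only then relaxes integrality, invoking strong duality in the final step, whereas you combine the two relaxations and settle for weak duality, which suffices for the stated inequality.
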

	\begin{proof}
		We have
		\begin{align}
			\begin{split}
				\hspace{-0.5cm}\rho(\x,s) =& \max_{\y,\z} \lrbr{\cc\T\y-\cc\T\x \colon \aa\T\y + \sum_{i=1}^{n} z_i\bar{a}^s_iy_i \leq b,\ \sum_{i=1}^{n}z_i  = \Gamma,\ \z\in[0,1]^n , \y\in\lrbr{0,1}^n }\\
				=& -\cc\T\x + \max_{\y,\z,\u} \lrbr{\cc\T\y \colon
					\begin{array}{l}
						\aa\T\y + \sum_{i=1}^{n} \bar{a}^s_iu_i \leq b,\\
						\sum_{i=1}^{n}z_i  = \Gamma, \ \u \geq \z+\y-\e,\ \u\leq \z,\ \u\leq \y,\\
						\z\in[0,1]^n , \y\in\lrbr{0,1}^n, \u \geq \oo
				\end{array}}\\
				\leq& -\cc\T\x + \max_{\y,\z,\u} \lrbr{\cc\T\y \colon
					\begin{array}{l}
						\aa\T\y + \sum_{i=1}^{n} \bar{a}^s_iu_i \leq b,\\
						\sum_{i=1}^{n}z_i  = \Gamma, \  \u \geq \z+\y-\e,\ \u\leq \z,\ \u\leq \y,\\
						\z\in[0,1]^n , \y\in[0,1]^n,\u \geq \oo
				\end{array}}\\
				=& -\cc\T\x + \min_{\mu,\gga,\ggb,\ggc,\gl}\lrbr{\mu b+ \gl\Gamma+ \e\T(\ggc+\t+\s)\colon
					\begin{array}{rl}
						\aa\mu - \ggb +\ggc+\t \geq& \hspace{-0.2cm}\cc,\\
						\bar{\aa}^s\mu + \gga +\ggb -\ggc \geq&\hspace{-0.2cm} \oo,\\
						\e\gl - \gga + \ggc + \s \geq&\hspace{-0.2cm} \oo,\\
						\ \gga,\ggb,\ggc,\t,\s\in&\hspace{-0.2cm} \R^n_+, \ \mu\geq 0
					\end{array}
				},
			\end{split}
		\end{align}
		where the final step follows from linear duality and the fact that we consider a feasible knapsack problem.
	\end{proof}
	
	This relaxation may be primitive but in our experiments showed to be off by no more than 2\% at most for the instances we considered. Of course, such a difference could potentially be troubling in case different uncertainty regimes give similar regret, but are affected differently by the relaxation error. For the sake of simplicity, however, we were willing to make this sacrifice in our experiments.
	
	A notable problem that arises from the above reformulation stems from the term $\bar{a}^s_i\mu = \sum_{i=1}^{S}\bar{a}_i^sr^s_i\mu$ which is bilinear in $\r_s$ and $\mu$. Of course, since $\r_s$ is a binary vector we can achieve an exact McCormick linearization by considering,
	\begin{align*}
		\bar{a}^s_i\mu = \sum_{j=1}^{S}\bar{a}_i^sr^s_j\mu = \sum_{i=1}^{S}\bar{a}_i^sw_i\,, \quad w_i = r^s_i\mu\,, \ \mbox{ all } i \in \irg{1}{S}
	\end{align*}
	where the latter equality is replaced by its McCormick envelope given by
	\begin{align*}
		w_i\leq r^s_i \ol{M}_i\,,\
		w_i\geq 0\,,\
		w_i\leq \mu\,,\
		w_i\geq \mu-(1-r^s_i)\ol{M}_{max}\, , \ \mbox{ all } i \in \irg{1}{S} ,
	\end{align*}	
	where $\ol{M}_i,\ i\in\irg{1}{S}$ are upper bounds on $\mu$ under uncertainty regime $i\in\irg{1}{S}$, while $ \ol{M}_{max} = \max\lrbr{\ol{M_i} \colon i \in \irg{1}{S}}$. This construction is necessary since the final constraint must be redundant for all but the activated uncertainty regimes, while the first constraints need to be redundant only for the activated uncertainty regime. The difficulty is the construction of valid upper bounds which we provide in the following theorem:
	
	\begin{thm}\label{thm:BoundforMu}
		Let $s\in\irg{1}{S}$ be fixed and consider the minimization problem in (\ref{eqn:KnapssackRegretReformulation}) with $\Gamma<n$. Then for any optimal choice of $\mu^*$ we have $\mu^* \leq \ol{M}_i = \max\lrbr{ \frac{c_j}{a_j}\colon j\in\irg{1}{n} }$, and this bound cannot be improved upon.
	\end{thm}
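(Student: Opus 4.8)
The plan is to treat the minimization in~(\ref{eqn:KnapssackRegretReformulation}) as a linear program in the variables $(\mu,\gga,\ggb,\ggc,\gl,\t,\s)$ and to establish the claim in two parts: first that \emph{every} optimal $\mu^*$ satisfies $\mu^*\leq\ol M_i$, and second that there are data attaining $\mu^*=\ol M_i$, so the bound is best possible. Throughout I use $a_j>0$ and $b>0$, as is standard for the knapsack problem, feasibility of the underlying problem (which is assumed, so strong LP duality applies and the LP value is attained), and the shorthand $\ol M_i=\max\lrbr{c_j/a_j\colon j\in\irg 1n}$.

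For the first part I would first locate the optimal value. The point $(\mu,\gga,\ggb,\ggc,\gl,\t,\s)=(\ol M_i,\oo,\oo,\oo,0,\oo,\oo)$ is feasible: the first constraint group reads $a_j\ol M_i\geq c_j$, which holds by definition of $\ol M_i$; the second reads $\bar a^s_j\,\ol M_i\geq 0$, which holds since $\bar{\aa}^s\geq\oo$; the third reads $0\geq 0$. Its objective value is $\ol M_i\,b$, so the optimal value $V$ of the LP obeys $V\leq\ol M_i\,b$.

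The crux is then to show that at \emph{every} feasible point the non-$\mu$ part of the objective is nonnegative, i.e.\ $\gl\Gamma+\e\T(\ggc+\t+\s)\geq 0$. The only obstacle is the free sign of $\gl$: if $\gl\geq 0$ this is immediate from $\ggc,\t,\s\geq\oo$. For $\gl<0$ I would use the third constraint group $\gamma_j+s_j\geq\alpha_j-\gl$, summed over $j$, to get $\e\T\ggc+\e\T\s\geq\e\T\gga-n\gl\geq -n\gl$; combining with $\e\T\t\geq 0$ yields $\gl\Gamma+\e\T(\ggc+\t+\s)\geq\gl(\Gamma-n)\geq 0$, where the last step is exactly where $\Gamma<n$ enters (a product of two nonpositive numbers). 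Hence every feasible objective value is at least $\mu b$, so at an optimizer $\mu^* b\leq V\leq\ol M_i\,b$, and dividing by $b>0$ gives $\mu^*\leq\ol M_i$.

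For tightness I would exhibit an instance attaining equality. Taking $\bar{\aa}^s=\oo$ (no weight deviation) decouples the $(\u,\z)$ blocks from both objective and binding capacity constraint, collapsing the inner problem to the fractional-knapsack LP $\max\lrbr{\cc\T\y\colon\aa\T\y\leq b,\ \y\in[0,1]^n}$. For $0<b<a_{j^*}$, where $j^*$ attains the best ratio, only the best item is fractionally packed ($y_{j^*}=b/a_{j^*}\in(0,1)$, the bound $y_{j^*}\leq 1$ slack), and complementary slackness pins the capacity multiplier at $c_{j^*}/a_{j^*}=\ol M_i$; since $\mu$ in~(\ref{eqn:KnapssackRegretReformulation}) is precisely that multiplier, the optimal $\mu^*$ equals $\ol M_i$, ruling out any smaller universal bound. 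I expect the nonnegativity step for the free variable $\gl$ to be the only nonroutine part of the argument; the feasibility check and the shadow-price computation are standard.
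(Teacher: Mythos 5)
Your proof is correct, but it takes a genuinely different route from the paper's. For the upper bound you argue directly on the LP in (\ref{eqn:KnapssackRegretReformulation}): the point $(\mu,\gga,\ggb,\ggc,\gl,\t,\s)=(\ol{M}_i,\oo,\oo,\oo,0,\oo,\oo)$ is feasible (using $a_j>0$, $\bar{\aa}^s\geq\oo$, and positive utilities so that $\ol{M}_i\geq 0$ respects $\mu\geq 0$), which caps the optimal value at $\ol{M}_i\,b$; and summing the third constraint block to control the sign-free $\gl$ shows every feasible objective value dominates $\mu b$, with $\Gamma<n$ entering exactly at $\gl(\Gamma-n)\geq 0$. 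This yields $\mu^*\leq\ol{M}_i$ uniformly in $b>0$ in one stroke. The paper instead computes $\mu^*$ exactly for $b\in(0,a_1)$ by exhibiting a primal optimum of the inner maximization and comparing values under a perturbation $b\to b+\Delta b$ (a shadow-price argument), and then extends the bound to $b\geq a_1$ via a separate exchange argument showing that dual prices are nonincreasing in the resource level. Your argument buys uniformity and brevity (no case split on $b$, no monotonicity lemma); the paper's buys the exact identification $\mu^*=c_1/a_1$ on $(0,a_1)$, which simultaneously delivers tightness. Your tightness step is sound but slightly glossed: with $\bar{\aa}^s=\oo$ the pair $(\u,\z)$ indeed drops out of objective and capacity, but the McCormick constraints $\u\leq\y$ and $\u\geq\z+\y-\e$ still couple $\u$ to $\y$, so the phrase ``$\mu$ is precisely that multiplier'' needs one more line: in the full dual, the constraint for $y_{j^*}$ reads $a_{j^*}\mu-\beta_{j^*}+\gamma_{j^*}+t_{j^*}\geq c_{j^*}$, and $\beta_{j^*},\gamma_{j^*}$ are not automatically zero. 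Choose the primal optimum with $y_{j^*}=b/a_{j^*}\in(0,1)$, $u_{j^*}=0$ and $z_{j^*}=0$ --- possible precisely because $\Gamma<n$, so $\e\T\z=\Gamma$ can be met off index $j^*$ --- then $u_{j^*}<y_{j^*}$ and $z_{j^*}+y_{j^*}-u_{j^*}<1$ are slack, complementary slackness kills $\beta_{j^*}$ and $\gamma_{j^*}$ along with $t_{j^*}$, and the tight dual constraint pins $\mu=c_{j^*}/a_{j^*}$. (The same argument in fact works for arbitrary $\bar{\aa}^s\geq\oo$ with $\u=\oo$, so the degenerate choice $\bar{\aa}^s=\oo$ is not even needed, matching the paper's stronger per-instance tightness.) This is a completable detail, not a gap.
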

	\begin{proof}
		We will argue that the statement is true for any positive $b< a_1$ and conclude the proof by showing that this gives an upper bound for $\mu^*$  for all other positive $b$. So choose numbers $b<a_1$ and $\Delta b>0$  such that still $b+\Delta b < a_1$. We can reorder indices $j$ such that  $\frac{c_1}{a_1} \geq \frac{c_j}{a_j}$ for all $ j\in\irg{1}{n}$. Then the last maximization problem in (\ref{eqn:KnapssackRegretReformulation}) has as a  feasible solution
		$\y = \frac b{a_1}\, \e_1$, $\z=\sum_{j=n-\Gamma}^n \e_j$, $\u =\oo$, with objective function value $\frac{bc_1}{a_1}$. The problem $\max_{\y\geq 0}\lrbr{\cc\T\y\colon \aa\T\y\leq b} = \frac{bc_1}{a_1}$ is a relaxation, hence the aforementioned feasible solution is actually optimal. Replacing $b$ by $b+\Delta b$ does not change
		above construction whenever the right-hand side $b+\Delta b$ stays between zero and $a_1$. Therefore the dual solution stays the same, as it is feasible for both variants, and complementary slackness continues to hold (only $y_1$ changes and the new constraint $\aa\T\y\leq b+\Delta b$ is again binding at $\y$). Thus, for the optimal choice $\mu^*$,
		the difference of dual optimal values (by strong duality, equalling the difference of the primal-optimal values) is
		$$
		\mu^*\Delta b  = \max_{\y,\u,\z}\lrbr{\cc\T\y\colon \aa\T\y+\aa_s\T\u\leq b+\Delta b, \dots} - \max_{\y,\u,\z}\lrbr{\cc\T\y\colon \aa\T\y+\aa_s\T\u\leq b, \dots}  =\frac{ c_1\Delta b}{a_1}
		$$
		so that $\mu^* = \frac{c_1}{a_1} = \max_j\lrbr{ \frac{c_j}{a_j}\colon j\in\irg{1}{n} }$ whenever $b\in(0,a_1)$. We will now show that this gives an upper bound for all other values $b\geq a_1$. The reasoning is that the dual price of a resource can only decrease when it becomes more abundant. To see this, consider two generic duals $\min_{(\mu;\bar{\mmu})\geq \oo}\lrbr{b_i \mu + \bar{\b}\T \bar{\mmu}\colon \aa\mu+\Bb\bar{\mmu} \geq \cc }$ for $i =1,2$ with $b_1<b_2$, and their respective optimal solutions $(\mu_1^*,\bar{\mmu}_1^*)$ and $(\mu_2^*,\bar{\mmu}_2^*)$. Assume that $\mu_2^*>\mu_1^*$. Since both solutions are feasible for either problem we have $ b_2\mu_1^* + \bar{\b}\T \bar{\mmu}_1^*\geq b_2\mu_2^* + \bar{\b}\T\bar{\mmu}_2^*$ which implies
		$$\bar{\b}\T(\bar{\mmu}_1^*-\bar{\mmu}_2^*)\geq b_2(\mu_2^*-\mu_1^*)> b_1(\mu_2^*-\mu_1^*)$$
		by $\mu_2^*>\mu_1^*$ and $b_1<b_2$. This yields the final contradiction $b_1\mu_1^* + \bar{\b}\T \bar{\mmu}_1^* > b_1\mu_2^* + \bar{\b}\T\bar{\mmu}_2^*\,$, which establishes the claim.
	\end{proof}
	
	\subsection{Predictability}
	In contrast to the shortest path problem, we now consider predictability not with respect to the objective function value, but with respect to resource consumption, so we want to optimize the difference between the maximum and the minimum resource consumption of our solution, across all realizations of the uncertainty parameter. It is clear that \cref{thm:Predictability} is general enough as to cover the knapsack problem with endogenously uncertainty constraints. Still, we want to give a brief presentation of the derivation and the final result for the special case for the readers' convenience.
	\begin{align*}
		\begin{split}
			&\max_{\x,s,\ol{q},\underline{q}} \left\{ \gc_s + \ga \cc\T\x - \beta(\ol{q},\underline{q})\right \} \\
			\mathrm{s.t.:}\
			&\ol{q}  \geq \aa\T\x + \max_{\u}\lrbr{ \sum_{i=1}^{n} u_i\bar{a}^s_ix_i\colon \sum_{i=1}^{n}u_i  = \Gamma,\ \u\in[0,1]^n }, \\
			&\underline{q}  \leq \aa\T\x + \min_{\u}\lrbr{ \sum_{i=1}^{n} u_i\bar{a}^s_ix_i\colon \sum_{i=1}^{n}u_i  = \Gamma,\ \u\in[0,1]^n } ,\\
			&\x  \in \RR^s_{opt}\, ,\ s \in \irg{1}{S}\,, \left(\cc\T\x\leq \varphi^*+\eps\right) .
		\end{split}
	\end{align*}
	Again making use of the equality
	\begin{align*}
		\max \lrbr{\g\T\z:\ \e\T\z = \Gamma,\ \z\leq\e,\ \z\in\R^n_+ } = \min\lrbr{\gl \Gamma+ \ggd\T\e \colon \gl \e + \ggd \geq \g,\ \gl\in\R, \ \ggd\geq \oo}
	\end{align*}
	we arrive at
	\begin{align*}
		\begin{split}
			\max_{\x,s,\ol{q},\underline{q},\ol{\gl},\underline{\gl},\ol{\ggd},\underline{\ggd}} 			&\left\{
			\gc_s + \ga\cc\T\x - \beta(\ol{q},\underline{q}) \right\} \\
			\mathrm{s.t.:}\
			\ol{q}  &\geq \aa\T\x + \ol{\gl}\Gamma+ \ol{\ggd}\T\e\, , \ \ol{\gl}+ \ol{\gd}_i \geq \bar{a}_1^sx_i\, ,  \quad i \in \irg{1}{n}\,, \\
			\underline{q}  &\leq \aa\T\x + \underline{\gl}\Gamma+ \underline{\ggd}\T\e \,,\  \underline{\gl}+ \underline{\gd}_i \leq \bar{a}_1^sx_i \, , \quad i \in \irg{1}{n}\, ,\\
			\x  &\in \RR^s_{opt}\, ,\ s \in \irg{1}{S}\, ,\ \ol{\ggd}\geq \oo,\ \underline{\ggd}\leq \oo,\  \underline{\gl},\ol{\gl} \in \R\, , \left(\cc\T\x\leq \varphi^*+\eps\right)\, ,
		\end{split}
	\end{align*}
	which, besides the linearization of $\bar{a}_1^sx_i$ we discussed previously, needs no further reformulation.
	
	\subsection{Best-case performance}
	Again it is the best-case performance with respect to resource consumption rather than objective function value, as only the former and not the latter depends on the uncertainty parameter. The model we want to solve is thus
	\begin{align*}
		\begin{split}
			&\max_{\x,s,\ol{q}}  \left\{ \gc_s + \ga \cc\T\x + \beta(\ol{q})\right \} \\
			\mathrm{s.t.:}\
			&\ol{q}  \leq \aa\T\x + \max_{\u}\lrbr{b-\sum_{i=1}^{n} u_i\bar{a}^s_ix_i\colon \sum_{i=1}^{n}u_i  = \Gamma,\ \u\in[0,1]^n }, \\
			&\x  \in \RR^s_{opt}\, ,\ s \in \irg{1}{S}\, , \left(\cc\T\x\leq \varphi^*+\eps\right)\, ,
		\end{split}
	\end{align*}
	where the inner max-operator can be dropped due to the direction of the inequality, and the multilinear terms $u_i\bar{a}^s_ix_i$ can be exactly realized using McCormick envelopes where the arising big-M constants are all equal to 1.	
	
	\subsection{Numerical experiments: bilevel model for uncertain knapsack problems}
	For the uncertain knapsack problem we set up an experiment analogous to the SPP setting and generated instances of this problem with $n =100,\ \Gamma = 10,\ b=12$ and $S = 10$. The vector of utilities $\cc$ and the vector of of nominal weights $\aa$ were created randomly, where each entry was drawn from the interval $[1,2]$. The entries of the vectors of perturbation $\bar{\aa}_s$, $ i \in\irg{1}{S}$, were uniformly drawn from $[1,5]$.
	
	We want to stress that we chose above parameters for generating the instances deliberately, to ensure strong effects of different uncertainty preferences in the objective for a small number of uncertainty regimes.
	In any case it should be clear that results from random instances can not be generalized to real world instances in a straightforward way, in particular when $S$ is larger. We will shortly discuss which conclusions can be drawn safely from our experiments.
	
	For the formulation of the bilevel optimization problem we followed the discussion in \cref{apx:The knapsack problem with endogenous Gamma-uncertainty}, i.e. we used \cref{thm:BinrayRef1} for the characterization of $\RR_{opt}^s$, where we used lazy cuts in order to add the respective constraints as necessary. The rest of the setup was analogous to the experiments in the preceding section.
	
	\begin{figure}[h!]
		\centering
		\includegraphics[scale= 0.40]{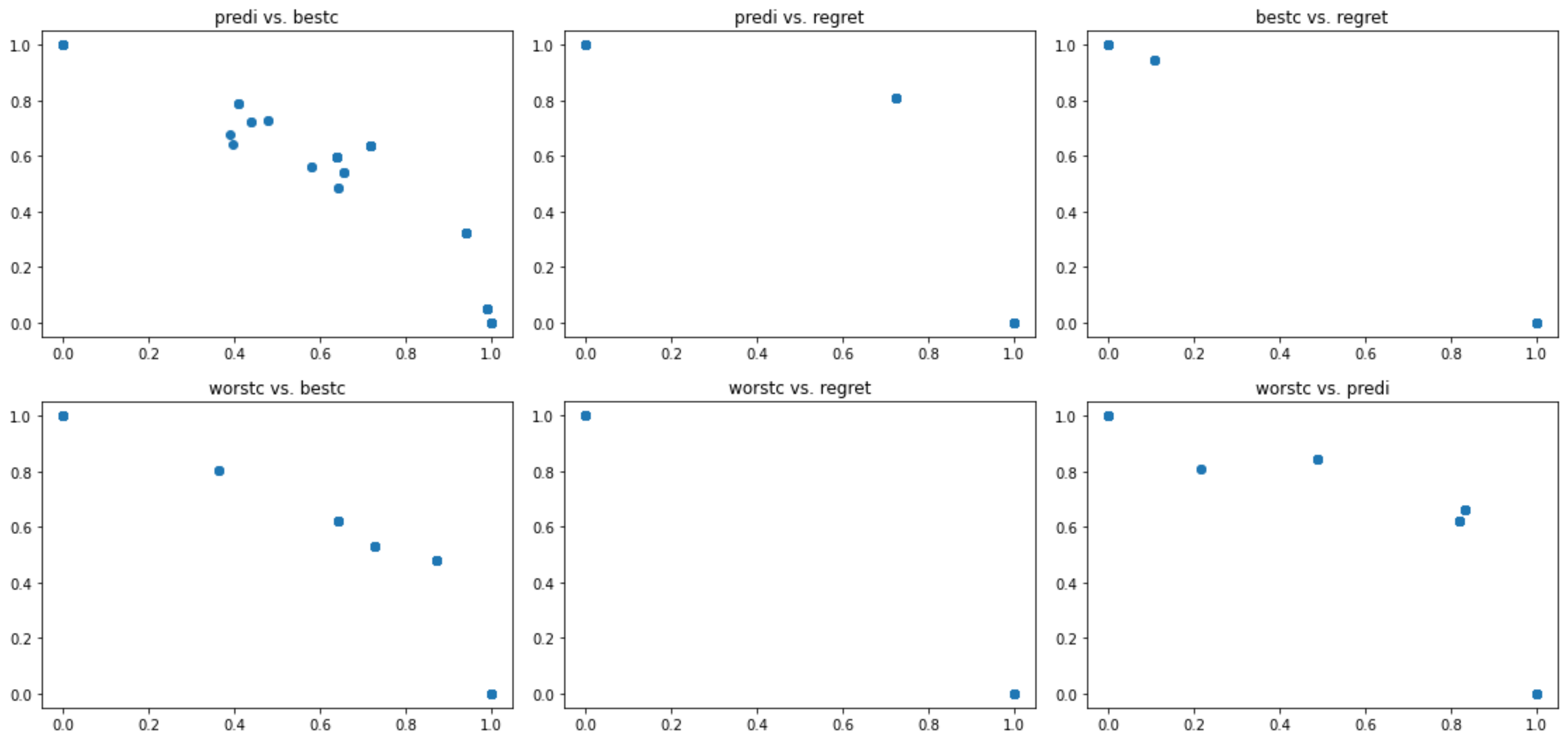}	
		\caption{Scatter plots for the knapsack problem with $\Gamma$-uncertainty}
		\label{fig:KnapSackScatter}	
	\end{figure}
	Again we see from the scatter plots in \cref{fig:KnapSackScatter} that, at least in our instances, when optimizing over $\RR_{opt}^s$ there is a trade-off to be made between the different attributes we were considering. Thus, in a bilevel-setting, a leader may exert meaningful influence over the decision of the second-stage robust optimizer.  Of course it is impossible to infer whether this phenomenon exists in all real-world instances as well,
	but our experiments confirm that there is a potential for this approach.
	\begin{table}[h!]
		\centering
		\begin{tabular}{c| c c c c c }
			preferences& $T$  & [$T_{min}$, $T_{max}$]  & $\#TO$  & $\Delta_1$\%    & $\Delta_2$ \% \\
			\hline
			\hline
			worstc vs predi   &21.56 	&[6.42, 83.26]	&16	 &7.73	 &7.12 \\
			
			worstc vs. bestc  &11.87	&[2.63, 29.2]	&19	 &10.34	 &11.11\\
			
			worstc vs. regret &7.27		&[2.63, 39.83]	&12	 &4.30   &8.45\\
			
			predi vs. bestc   &25.88	&[8.84, 105.38]	&20	 &23.41  &18.38\\
			
			predi vs. regret  &6.82		&[2.23, 30.49]	&8	 &7.11	 &5.77 \\
			
			bestc vs. regret  &5.34 	&[1.44, 23.27]	&8	 &4.22	 &4.97 \\
		\end{tabular}
		\caption{ Results for knapsack instances with interval uncertainty}\label{tbl:Knapsack}	
	\end{table}
	
	Similar conclusions can be drawn from \cref{tbl:Knapsack}, where we observe varying degrees of trade-off between the different attributes. We want to highlight that we were able to solve the instances in seconds.
	
	
	\section{Appendix: proof of \cref{thm:Predictability}}\label{apx:proof of Theorem6}
	
	\begin{proof}[Proof of \cref{thm:Predictability}]
		The proof follows immediately from standard arguments based on convex duality that are well known in robust optimization. For the readers' convenience we present the key idea in the notation used here. First, by invoking convex duality we have
		\begin{align*}
			\sup_{\u\in\UU}\lrbr{\d\T\u} =\sup_{\u}\lrbr{\d\T\u \colon \left(1,\u\T \right)\T\in\KK} =
			\inf_{\rho}\lrbr{\rho\colon \left(\rho,-\d\T\right)\T\in\KK^*}.
		\end{align*}
		Here strong duality holds, since we assumed that the uncertainty sets have nonempty interior and thus Slater's condition holds.
		We thus have the equivalence
		\begin{align*}
			\sup_{\u \in \UU}\lrbr{\d\T\u + d_0 }\leq 0 \ \Leftrightarrow \ d_0 + \rho \leq 0\, \mbox{ and }\, \left(\rho,-\d\right)\T\in\KK^*\, .
		\end{align*}		
		The reformulation follows by using this equivalence with $\d$ replaced  by $\x\T\Ab_i-\b_i\T$ and $\x\T\Cb$ respectively. For the infima the analogous procedure holds. Also, we eliminated the epi- and hypographical variables $\ol{q}_i$ and
		$\uline{q}_i\, ,\mbox{ all }i\in\irg{0}{m}$.
	\end{proof}

	\section{Appendix: open questions and outlook}\label{apx:open questions and outlook}
	
	Despite the exhaustive discussion of our framework, we feel that there are still many interesting open questions left. We wish to use this section to give a brief account of what we consider to be interesting directions for future research.
	
	In our discussions on different choices for $\MM(\x,s)$, we introduced convex functions $\beta(.)$ and $\beta_i(.)$, but settled for very simple choices of those functions in our applications. Obviously other choices would be viable from a computational standpoint, but the question is whether there are meaningful choices that can be justified based upon the decision makers' preferences.
	
	Throughout the paper we assumed that the uncertainty set contains the realizations of the uncertainty parameter for which the decision maker takes responsibility. However, this is not always how we construct uncertainty sets. In literature we find constructions of uncertainty sets that are aimed at at providing an equivalent reformulation for optimization of a risk measure. It does not to make sense to optimize predictability, best-case performance or max-regret over such uncertainty sets. However, the overall philosophy of using the flexibility of the endogenously uncertain setting in order to optimize secondary characteristics related to the uncertainty process might still be applied in these cases, but some extra effort must be made in order to transform assumptions on the uncertainty into meaningful secondary attributes. What these attributes might be and, how they can and should influence the decision is a question that in our opinion is an interesting direction for future research.
	
	In the bilevel optimization model we assumed a cooperative relationship between the leader and the follower, however, in practice this might not be necessarily the case. An alternative approach would be to assume an adversarial relationship where the follower chooses the robustly optimal solution that works against the preference of the leader. We deem such a model an interesting topic for future research.
	
	Some of the appeal of robust optimization stems from the fact that many robust counterparts can be solved using standard optimization solvers. One of our goals was to show that this is at least partly true for our framework. However, in order to have broader range of possible applications and also to have better performance of  the solution process, it would be worthwhile to investigate specialized algorithms that are able to solve (\ref{eqn:UPMModel}) in specified settings.
	
	We hope we can address these points in future research.

\end{document}